 \DeclareMathOperator{\sech}{sech}
\theoremstyle{thmstyleone} 
\newtheorem{theorem}{Theorem} 
\newtheorem{proposition}[theorem]{Proposition} 
\newtheorem{lemma}{Lemma} 
\theoremstyle{thmstyletwo} 
\newtheorem{example}{Example} 
\newtheorem{assumption}{Assumption}[section]
\theoremstyle{thmstylethree} 
\newtheorem{definition}{Definition}
\newtheorem{note}{Note}[section]
\newtheorem{cor}{Corollary}[section]
\newcommand{\STAB}[1]{\begin{tabular}{@{}c@{}}#1\end{tabular}}
\newcommand{\thickhline}{ 
    \noalign {\ifnum 0=`}\fi \hrule height 1.5pt
    \futurelet \reserved@a \@xhline
}
\newcolumntype{"}{@{\hskip\tabcolsep\vrule width 1pt\hskip\tabcolsep}}
\begin{document}

\title[Trust-Region Method for Set Optimization Problems]{Trust-Region Method for Optimization of Set-Valued Maps Given by Finitely Many  Functions}
 
\author[1]{\fnm{Suprova} \sur{Ghosh}}\email{suprovaghosh.rs.mat19@itbhu.ac.in}

\author*[1]{\fnm{Debdas} \sur{Ghosh}}\email{debdas.mat@iitbhu.ac.in}

\author[2]{\fnm{Christiane} \sur{Tammer}}\email{christiane.tammer@mathematik.uni-halle.de}

\author[3]{\fnm{Xiaopeng} \sur{Zhao}}\email{zhaoxiaopeng.2007@163.com}

\affil[1]{\orgdiv{Department of Mathematical Sciences}, \orgname{Indian Institute of Technology (BHU)}, \orgaddress{\city{Varanasi}, \postcode{221005}, \state{Uttar Pradesh}, \country{India}}}

\affil[2]{Department of Mathematics, Martin-Luther-University Halle-Wittenberg, 06099, Halle (Saale), Germany}

\affil[3]{School of Mathematical Sciences, Tiangong University, Tianjin, 300387, China}

\abstract{
In this article, we develop a trust-region technique to find critical points of unconstrained set optimization problems with the objective set-valued map defined by finitely many twice continuously differentiable functions. The technique is globally convergent and has the descent property. To ensure the descent property, a new rule of trust-region reduction ratio is introduced for the considered set-valued maps. In the derived method, to find the sequence of iteration points, we need to perform one iteration of a different vector optimization problem at each iteration. Thus, the derived technique is found to be not a straight extension of that for vector optimization. The effectiveness of the proposed algorithm is reported through performance profiles of the proposed approach with the existing methods on various test examples. A list of test problems for set optimization is also provided.}

\keywords{Set-valued optimization, trust-region method, reduction ratio, vector optimization, oriented distance function, critical point} 

\pacs[2020 MSC Classification]{90C20, 90C30, 90C46, 49M37, 65K05}

\maketitle

\section{Introduction}\label{sec1}

In this article, we consider the set optimization problem
\begin{equation*} 
    \underset{{x \in \mathbb{R}^n}}{\text{minimize}} ~~  F(x), 
\end{equation*}
where the objective map $F:\mathbb R^{n} \rightrightarrows \mathbb R^{m}$ is given by 
\begin{align}\label{particl_set} 
F(x) := \left\{ f^{1}(x), f^{2}(x), \ldots, f^{p}(x) \right\}, ~x \in \mathbb R^{n},
\end{align}
where $f^{1}, f^{2}, \ldots, f^{p}: \mathbb R^{n}\to \mathbb R^{m} $ are twice continuously differentiable functions. There are two main approaches to defining the solution concept of a set optimization problem: vector approach and set approach. \emph{Vector approach} compares elements belonging to the graph of ${F}$: 
 an element $x_0 \in \mathbb{R}^n$ is called a minimal solution if there exists $y_0 \in F(x_0)$ such that $y_0$ is a minimal element of the collection of $y$'s that lie in $F(\mathbb{R}^n)$. 
\emph{Set approach}, on the other hand, considers a pre-order on the power set of the image space of $F$ and then identifies the minimal element among the set values of the objective map.

Existing algorithms in the literature for solving set optimization problems are primarily based on these two approaches, which can be broadly reclassified into six groups: 
scalarization-based methods \cite{steepmethset, Ehrgott, Eichfelder22, Gabriele, Ide, Ide214}, derivative-free methods \cite{Jahnderivativefree,Jahn2018, kobis}, sorting-based methods \cite{bGunther2019,Jahn}, branch and bound method  \cite{eichfelder2020algorithmic}, gradient-based descent methods  \cite{steepmethset,kumar2024nonlinear} and complete lattice approach \cite{lohne2025solution}.

These approaches have well-documented limitations.
Derivative-free methods suffer from many drawbacks, as detailed in \cite{steepmethset}. For the type of problems \eqref{particl_set} considered in this paper, sorting-based and branch-and-bound methods are simply not applicable because sorting-based methods require the feasible set to be finite, and branch-and-bound methods require the problem to be constrained in order to define the box that contains the feasible set. The steepest descent method is known to be slow, while the lattice approach is intractable in high dimensions. Given these limitations, it is natural to seek an alternative method that can handle nonconvexity, avoid explicit line searches, and still guarantee convergence.

\subsection{Related work}
One promising direction is the trust-region methodology, which has been shown to be both globally convergent and robust in various optimization settings.
Trust-region method for scalar optimization has been studied extensively in \cite{heinkenschloss1998trust,conn2000trust,absil2009accelerated}. This method has been applied to solve multi-objective problems in \cite{Qu2013} and \cite{villacorta2014trust}. Jauny et al. \cite{ghosh2022trust} developed a trust-region interior-point technique to generate the Pareto optimal solution for multi-objective optimization problems.
Ryu and Kim \cite{ryu2014derivative} proposed a derivative-free trust-region algorithm for black-box biobjective optimization problems. Thomann and Eichfelder \cite{thomann2019trust} presented a trust-region method for heterogeneous multi-objective optimization problems. Carrizo et al. \cite{Trrionglion} adapted the notions of descending condition and predicted reduction to the multi-objective case. Very recently, Mohammadi and Cust{\'o}dio \cite{mohammadi2024trust} designed a trust-region algorithm with the help of the concepts of extreme point step and localization step that helps to estimate an approximation of the complete Pareto front in a multi-objective problem. 
While these studies demonstrate the flexibility of trust-region methods, they have so far been developed only for scalar or multi-objective problems, and not for vector optimization in its general form, nor for set optimization problems. This gap motivates the present work.

\subsection{Motivation}
 To overcome the limitations identified above, we aim to derive a trust-region method for the class of set optimization problems considered in this paper. This method not only exhibits global convergence property but also does not use any explicit line search. Especially for nonconvex optimization, trust-region methods \cite{conn2000trust} have been found to be very effective for scalar and vector optimization problems. 
Thus, in this study, we present a set approach for set optimization problems by employing a trust-region-based algorithm with indirect scalarization. We finally solve the resulting scalar optimization problem by using the proposed trust-region algorithm for set optimization.
Although trust-region methods have been studied for multi-objective optimization, it has not yet been studied for vector optimization and set optimization.

\subsection{Contributions}

In this article, we derive a trust-region method for set optimization problems; when the set-valued objective map is a singleton-vector-valued map, the proposed method acts as a trust-region method for vector optimization. In order to update the trust-region radius and to determine acceptance of a step, we introduce a new rule of reduction ratio for the setting of a set-valued objective map. Furthermore, we show that, similar to the conventional trust-region methods, the global convergence property holds for set optimization problems. Although the proposed trust-region method is motivated from the works of Bouza et al. \cite{bouza2019unified}, Fliege et al. \cite{NewtonMethod}, Carrizo et al. \cite{Trrionglion}, however, it is distinct and novel in the following manner.
\begin{itemize}
\item  Bouza et al. \cite{bouza2019unified} proposed a steepest descent method with first-order convergence using the Gerstewitz scalarization. Our method ensures second-order global convergence using the oriented distance scalarization function. Furthermore, in our method, the selection of objectives at each iteration, based on the chosen element from the partition, is different from that in \cite{bouza2019unified} as there is no line search used in the proposed method.

\item  Fliege et al. \cite{NewtonMethod} addressed multi-objective optimization. In contrast, we tackle optimization problems involving set-valued objective maps, where the solution concept is given by the lower set-less relation, which generalizes the usual dominance in vector optimization. At each iteration, our algorithm creates a new vector subproblem depending on the partition set  $P_{x_k}$ (which commonly changes at every iteration) and thus cannot be treated as a particular type of a vector optimization problem, as in \cite{NewtonMethod}.

\item Our method differs from \cite{Trrionglion} in the same way as it does from \cite{NewtonMethod}. Further, since our approach uses a general ordering cone rather than the canonical ordering cone as in Carrizo et al. \cite{Trrionglion}, the approach in \cite{Trrionglion} cannot be directly generalized to the proposed one. In fact, we need to introduce a reduction ratio based on the oriented distance function for a general ordering cone, tailored to set-valued maps, instead of the conventional reduction ratio used in \cite{Trrionglion}.
\end{itemize}

\subsection{Organization}
The proposed study is organized as follows: Section \ref{section1} presents the basic terminologies and recalls some pre-existing definitions. It also introduces the notion of critical points and descent directions for set-valued maps. In Section \ref{Section3}, we describe the proposed trust-region-based approach in detail. We also provide a step-by-step algorithm and report its well-definedness. Section \ref{global_conv_ana} is devoted to the global convergence analysis of the method. Section \ref{Numerical_Experiment} presents numerical experiments and results by running the proposed Algorithm \ref{avds1} on a few test cases. Finally, Section \ref{sect6} draws a few potential future scopes.

\section{Preliminaries and Terminologies}
\label{section1} 
\noindent
In this section, we provide notations and definitions that are used throughout the paper. The notations $\mathbb R$, $\mathbb R_{+}$ and $\mathbb R_{++}$ stand for the set of real numbers, the set of nonnegative real numbers, and the set of positive real numbers, respectively; also, $\mathbb R^{m}_{+}$ and $\mathbb R^{m}_{++}$ denote the nonnegative and positive hyperoctant of $\mathbb R^{m}$, respectively. 

The class of all nonempty subsets of $\mathbb R^{m}$ is denoted by  $\mathscr{P}(\mathbb R^{m})$. Further, for an $\mathcal{A} \in \mathscr{P}(\mathbb R^{m})$, its interior and boundary are denoted by $\operatorname*{int}({\mathcal A})$ and $\operatorname*{bd}({\mathcal A})$, respectively. 
The notation $\lVert \cdot \rVert$ denotes a norm. The cardinality of a finite set $\mathcal A$ is denoted by $\lvert \mathcal{A} \rvert$. For any $k \in \mathbb N$, we denote $[k] := \{1, 2, \ldots, k\}$.

A set $K \in  \mathscr{P}(\mathbb R^{m})$ is called a cone in $\mathbb{R}^m$ if  $y \in K$ implies $ty\in K $ for all $t \ge 0$. A cone $K$  is convex  if $K+K = K$, pointed  if $ K \cap (-K) = \{0\}$, and solid if $\operatorname*{int}(K) \neq \emptyset$. If $K$ is a convex  and pointed cone in $\mathbb{R}^m$, it defines a partial ordering on $\mathbb R^{m}$ given by
\begin{align*}
    y \preceq_{K}z \Longleftrightarrow z -y \in K,
\end{align*}
and moreover, if $K$ is solid, it defines a strict ordering on $\mathbb R^{m}$ given by
\begin{align*}
     y \prec_{K} z \Longleftrightarrow z -y \in \operatorname*{int}(K).
\end{align*}

Throughout the rest of the paper, wherever we use the notation $K$, it is a closed, pointed, convex, and solid cone in $\mathbb{R}^m$. 
\begin{definition} {\cite{khran}}  
Let $ {\mathcal A} \in \mathscr P(\mathbb R^{m})$ and $y_{0}\in {\mathcal A}$. 
\begin{enumerate}[(i)]
    \item An element $y_{0}$ is called a $K$-minimal element of ${\mathcal A}$ if $(y_{0}-K)\cap {{\mathcal{A}}}= \{ y_{0}\}$. The collection of all $K$-minimal elements of $\mathcal A$ is denoted by $\operatorname*{Min}(\mathcal{A}, K )$.
    
    \item An element $y_{0}$ is called a weakly $K$-minimal element of ${\mathcal A}$ if $(y_{0}-\operatorname*{int}{(K)})\cap {{\mathcal{A}}}= \emptyset$. The collection of all weakly $K$-minimal elements of $\mathcal{A}$ is denoted by $\operatorname*{WMin}(\mathcal{A}, K)$.
\end{enumerate}
\end{definition}

We next recall the lower set-less relation $\preceq^{l}_K$ on $\mathscr P(\mathbb R^{m})$ with respect to a given $K$ (see \cite{kuoiwa}). Let ${\mathcal A}, {\mathcal B} \in \mathscr P(\mathbb R^{m})$. The relation $\preceq^{l}_K$ on $\mathscr{P}(\mathbb R^{m})$ is defined by 
  \begin{align*}
    {\mathcal A} \preceq^{l}_{K} {\mathcal B}  ~\Longleftrightarrow~ {\mathcal B}\subseteq {\mathcal A} +  K.
\end{align*}
If $K$ is solid, the strict lower set-less relation $\prec^{l}_K$ on $\mathscr P(\mathbb R^{m})$ is given by 
\begin{align*}
    {\mathcal A} \prec^{l}_{K} {\mathcal B}  \Longleftrightarrow 
    {\mathcal B}\subseteq {\mathcal A} +  \operatorname*{int}(K).
\end{align*}

 Throughout the paper, the $j$-th component function of the function $f^i: \mathbb R^{n}\to \mathbb R^{m}$ is denoted by $f^{i, j}$, i.e., 
\[f^{i}(x) := \left(f^{i, 1}(x), f^{i, 2}(x), \ldots, f^{i, m}(x)\right)^\top, ~ x \in \mathbb{R}^n, ~i \in [p].\]
Often we present the map $F$ in \eqref{particl_set} simply by the shorthand $F:=\{f^i\}_{i \in [p]}$. 
Associated to a given set-valued map $F: \mathbb R^{n} \rightrightarrows \mathbb R^{m}$, we study  the unconstrained set optimization problem 
\begin{equation}\label{fgcx}
(\preceq^{l}_{K})\text{--}\min_{x\in \mathbb R^{n}}  F(x).
\tag{\text{$\mathcal{SOP}^{l}_K$}}
\end{equation}
We say that $\bar x\in \mathbb R^{n}$ is a (resp., weakly) $\preceq^{l}_{K}$-minimal solution of (\ref{fgcx}) if there does not exist any $ x\in \mathbb R^{n}$ such that (resp., $F(x) \prec^{l}_K F(\bar x)$) $F(x) \preceq^{l}_K F(\bar x)$.  Here, we note that if we consider $p = 1$, then $F(x) = {f^{1}(x)}$, and the (resp., weakly) $\preceq^{l}_{K}$-minimal solution of (\ref{fgcx}) reduces to the (resp., weakly) $K$-minimal solution of the vector optimization problem $f^{1}(x)$.

\begin{definition}\cite{steepmethset} \label{active_index} 
\begin{enumerate}[(i)]
\item For a given vector $ v \in \mathbb R^{m}$, the map $I_{v}: \mathbb R^{n} \rightrightarrows [p]$ is defined by
\begin{align*}
    I_{v}(x)  := \{ i \in I(x):  f^{i}(x)= v\}, 
\end{align*}
where the map $I: \mathbb R^{n} \rightrightarrows [p] $ is given by 
$I(x) := \{ i \in [p]: f^{i}(x) \in \operatorname*{Min}({F}(x),  K)\}$.  
\item The cardinality function $ \omega : \mathbb R^{n} \to \mathbb R$ is defined by $\omega (x)  := \lvert  \operatorname*{Min} (F(x),  K)\rvert.$ 
\end{enumerate}
\end{definition}

\begin{definition} \cite{steepmethset} For a given $\bar x \in \mathbb R^{n}$, consider an enumeration $\{v^{\bar x}_{1}, v^{\bar x}_{2}, \ldots v^{\bar x}_{\omega(\bar x)} \}$ of  the set $\operatorname*{WMin}(F(\bar x),  K) $.
  The \emph{partition  set} at $\bar x$ is defined by 
  \begin{align*}
      {P}_{\bar x} := I_{v^{\bar x}_{1}}(\bar x) \times I_{v^{\bar x}_{2}}(\bar x) \times \cdots \times I_{v^{\bar x}_{\omega(\bar x)}}(\bar x).
  \end{align*}
\end{definition}
Throughout the paper, for a given $\bar x \in \mathbb{R}^n$, we denote $\bar \omega := \omega(\bar x)$, and a generic element of the partition set $P_{\bar x}$ is denoted by $\bar a := (\bar a_1, \bar a_2, \ldots, \bar a_{\bar \omega})$. 

\begin{definition}\label{ghtr_ihn}  \cite{steepmethset}  
A point $\bar x$ is said to be a regular point of the set-valued map $F: \mathbb{R}^n \rightrightarrows \mathbb{R}^n$ if 
$\operatorname*{Min}({F}(\bar x), K) = \operatorname*{WMin}
 ({F}(\bar x), K)$, and 
the cardinality function $\omega$ as given in Definition \ref{active_index} is constant  in a neighbourhood of $\bar{x}$.
\end{definition}

\begin{lemma}\label{regularity_condition}
\emph{\cite{steepmethset}} If $\bar x$ is a regular point of $F$, then there exists a neighborhood $\mathcal{N}$ of $\bar x$ such that 
$\omega(x) = \omega(\bar x) \text{ and } P_x \subseteq P_{\bar x} \text{ for all } x \in \mathcal{N}. $
\end{lemma}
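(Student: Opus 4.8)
The plan is to establish the two conclusions separately. That $\omega(x)=\omega(\bar x)$ on some neighborhood $\mathcal N_0$ of $\bar x$ is immediate: it is precisely the local constancy of the cardinality function $\omega$ that is built into the definition of a regular point. All the real work therefore lies in the inclusion $P_x\subseteq P_{\bar x}$, and for this the regularity equality $\operatorname*{Min}(F(\bar x),K)=\operatorname*{WMin}(F(\bar x),K)$ will be used in an essential way.

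First I would classify the indices according to the value taken at $\bar x$. Let $\{v^{\bar x}_1,\dots,v^{\bar x}_{\bar\omega}\}$ be the pairwise distinct elements of $\operatorname*{Min}(F(\bar x),K)=\operatorname*{WMin}(F(\bar x),K)$, and put $A_l:=\{\,i\in[p]:f^i(\bar x)=v^{\bar x}_l\,\}$ for $l\in[\bar\omega]$ and $A_0:=[p]\setminus\bigcup_{l\in[\bar\omega]}A_l$. Since each $v^{\bar x}_l$ is $K$-minimal one checks that $A_l=I_{v^{\bar x}_l}(\bar x)$, so $P_{\bar x}=A_1\times\cdots\times A_{\bar\omega}$. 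For $i\in A_0$ the value $f^i(\bar x)\notin\operatorname*{WMin}(F(\bar x),K)$, hence there is $j$ with $f^j(\bar x)-f^i(\bar x)\in-\operatorname*{int}(K)$; because $\operatorname*{int}(K)$ is open and the $f^i$ are continuous, this strict relation, and with it $f^i(x)\notin\operatorname*{WMin}(F(x),K)\supseteq\operatorname*{Min}(F(x),K)$, persists on a neighborhood. As $A_0$ is finite, one neighborhood $\mathcal N_1$ serves all $i\in A_0$, so that $I(x)\subseteq\bigcup_{l\in[\bar\omega]}A_l$ for every $x\in\mathcal N_1$.

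Next I would separate the limiting values. Choose $\delta>0$ so small that the balls $B(v^{\bar x}_l,\delta)$, $l\in[\bar\omega]$, are pairwise disjoint and avoid every $f^i(\bar x)$ with $i\in A_0$; by continuity there is a neighborhood $\mathcal N_2$ on which $f^i(x)\in B(v^{\bar x}_l,\delta)$ exactly for $i\in A_l$. A short argument then shows that, after shrinking once more, $\operatorname*{Min}(F(x),K)$ meets every ball $B(v^{\bar x}_l,\delta)$: the relations $v^{\bar x}_l-v^{\bar x}_{l'}\notin K$ (for $l'\ne l$) and $v^{\bar x}_l-f^j(\bar x)\notin K$ (for $j\in A_0$) follow from the $K$-minimality of $v^{\bar x}_l$ and are open, so for $x$ near $\bar x$ an index $i\in A_l$ can be $K$-dominated in $F(x)$ only by some $f^j(x)$ with $j\in A_l$, and since $\{f^i(x):i\in A_l\}$ is a nonempty finite set it contributes a $K$-minimal point of $F(x)$ inside $B(v^{\bar x}_l,\delta)$. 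As $|\operatorname*{Min}(F(x),K)|=\omega(x)=\bar\omega$ equals the number of balls, each ball contains exactly one minimal value, so the minimal values of $F(x)$ may be enumerated as $v^x_1,\dots,v^x_{\bar\omega}$ with $v^x_l\in B(v^{\bar x}_l,\delta)$ for all $l$. With this labelling, $i\in I_{v^x_l}(x)$ gives $f^i(x)=v^x_l\in B(v^{\bar x}_l,\delta)$, whereas membership $i\in A_{l'}$ gives $f^i(x)\in B(v^{\bar x}_{l'},\delta)$; disjointness forces $l'=l$, i.e. $I_{v^x_l}(x)\subseteq A_l=I_{v^{\bar x}_l}(\bar x)$ for each $l$. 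Taking products over $l\in[\bar\omega]$ yields $P_x\subseteq P_{\bar x}$, and $\mathcal N:=\mathcal N_0\cap\mathcal N_1\cap\mathcal N_2$ is the required neighborhood.

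I expect the main obstacle to be the bookkeeping of the enumerations: the inclusion $P_x\subseteq P_{\bar x}$ is meaningful only once the minimal values of $F(x)$ have been matched, factor by factor, with those of $F(\bar x)$, and the separation-plus-pigeonhole step above is exactly what produces this matching and shows it is forced near $\bar x$. A second delicate point is the genuine use of $\operatorname*{Min}(F(\bar x),K)=\operatorname*{WMin}(F(\bar x),K)$: it is what upgrades ``non-minimal value at $\bar x$'' to ``strictly $K$-dominated at $\bar x$'', an open and hence locally stable condition, thereby keeping $I(x)$ confined to $\bigcup_l A_l$ and preventing new minimal values from appearing near $\bar x$.
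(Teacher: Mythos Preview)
The paper does not supply its own proof of this lemma; it is quoted from \cite{steepmethset} without argument, so there is nothing in the present paper to compare against. Assessed on its merits, your argument is correct. The constancy of $\omega$ is indeed part of the definition of regularity, and for $P_x\subseteq P_{\bar x}$ your separation-plus-pigeonhole scheme works exactly as described. The two points you flag as delicate are the substantive ones: the equality $\operatorname*{Min}(F(\bar x),K)=\operatorname*{WMin}(F(\bar x),K)$ is what upgrades ``non-minimal at $\bar x$'' to ``strictly $K$-dominated at $\bar x$'', an open condition that keeps every index in $A_0$ out of $I(x)$ nearby; and the non-domination relations $v^{\bar x}_l-v^{\bar x}_{l'}\notin K$ and $v^{\bar x}_l-f^j(\bar x)\notin K$ (for $j\in A_0$) lie in the open set $K^c$, hence persist, so any $K$-dominator of an $A_l$-value must itself come from $A_l$. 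Combined with $\omega(x)=\bar\omega$, this yields exactly one minimal value of $F(x)$ in each ball $B(v^{\bar x}_l,\delta)$, and the disjointness of the balls then forces the factor-by-factor inclusion $I_{v^x_l}(x)\subseteq A_l=I_{v^{\bar x}_l}(\bar x)$. This is the standard route to the result.
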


\begin{definition}[Critical point for (\ref{fgcx})\label{spcritic}]
A point $ \bar x \in \mathbb R^{n}$ is said to be a $K$-critical point of \eqref{fgcx} if there does not exist any $\bar a \in P_{\bar x}$ and $\bar s \in \mathbb{R}^n$ such that 
$$ \nabla f^{\bar a_j}(\bar x)^{\top} \bar s \prec_K 0 \text{ for all } j \in [\omega(\bar x)], $$ 
where for any $j \in [\omega(\bar x)]$, the notation $\nabla f^{\tilde{a}_{j}}(\bar x)^{\top} s \in \mathbb{R}^m$ is given by $$\nabla f^{\tilde{a}_{j}}(\bar x)^{\top} s :=  
\left(   
 \nabla f^{\tilde{a}_{j},1}(\bar x)^{\top}{s}, 
 \nabla f^{\tilde{a_{j}},2}(\bar x)^{\top}{s},  
 \ldots, 
 \nabla f^{\tilde{a_{j}},m}(\bar x)^{\top}{s}
\right)^\top, s \in \mathbb{R}^n. $$  

Note that if we consider $p = 1$, then $F(x) = \{f^{1}(x)\}$ and then $K$-critical point of \eqref{fgcx} reduces to the $K$-critical point of the vector optimization problem $f^{1}(x)$.
\end{definition}

It is trivial to see that a weakly $K$-minimal point of \eqref{fgcx} is a $K$-critical point of \eqref{fgcx}.

\begin{definition}[$\preceq^{l}_{K}$-descent direction for set-valued maps]\label{descent_setopt}
A vector $s \in \mathbb R^{n}$ is said to be  a $\preceq^{l}_{K}$-descent direction of the set-valued map $F := \{ f^{i}\}_{i \in [p]}$ at $\bar x$ if there exists $t_{0} > 0$ such that $
\{f^{i}(\bar x + t s) \}_{i \in [p]} \prec^{l}_{K} \{ f^{i}(\bar x)\}_{i \in [p]} ~\text{for all}~ t \in (0, t_{0}].$

Note that if we consider $p = 1$, then $F(x) = \{f^{1}(x)\}$ and then $\prec^{l}_{K}$-descent direction of \eqref{fgcx} reduces to the ${K}$-descent direction of the vector optimization problem $f^{1}(x)$.
\end{definition}

\begin{lemma}\label{rtyrrsv}Let $\bar x \in \mathbb{R}^m$, $\bar{\omega}:= \omega (\bar x)$, and $\widetilde{K} \in \mathscr{P}(\mathbb R^{m\bar{\omega}})$ be the cone $\Pi^{\bar{\omega}}_{j=1} {K}.$  For all $a := (a_{1}, a_{2}, \ldots, a_{\bar{\omega}}) \in {P}_{\bar x}$, define the function ${\widetilde{f}}^{a}: \mathbb R^{n} \to \Pi_{j =1}^{\bar{\omega}} \mathbb R^{m}$ by  
\begin{align}\label{vector_valued}
{\widetilde{f}}^{a}(x) :=  \left( 
 f^{a_{1}}(x), f^{a_{2}}(x), \ldots, f^{a_{\bar{\omega}}}(x)\right)^\top. 
\end{align}
Then, $\bar{x}$ is a $K$-critical point of \eqref{fgcx} if and only if $\bar x$ is a $\widetilde{K}$-critical point of the following vector optimization problem for every $a \in {P}_{\bar x}$: 
\begin{equation*}\label{vec_pr}
(\preceq_{\widetilde{ K}})\text{--} \min_{x \in \mathbb R^{n}}  {\widetilde{f}}^{a}(x). 
\tag{\text{$\mathcal{VOP}_{a}$}}
  \end{equation*}
  \end{lemma}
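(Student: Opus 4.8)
The plan is to reduce both notions of criticality to one and the same statement — the unsolvability of a finite system of strict cone inequalities on Jacobian--vector products — and then to read off the equivalence from the quantifier ``for all $a\in P_{\bar x}$'' that appears on both sides.

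First I would unwind Definition~\ref{spcritic}: $\bar x$ is a $K$-critical point of \eqref{fgcx} if and only if, for \emph{every} $a=(a_1,\ldots,a_{\bar\omega})\in P_{\bar x}$, the system
\[
\nabla f^{a_j}(\bar x)^{\top}s\prec_K 0,\qquad j\in[\bar\omega],
\]
admits no solution $s\in\mathbb R^n$. On the other side, since $\widetilde f^{a}$ is a single $\mathbb R^{m\bar\omega}$-valued map, the $p=1$ case of Definition~\ref{spcritic} says that $\bar x$ is a $\widetilde K$-critical point of $(\mathcal{VOP}_a)$ if and only if there is no $s\in\mathbb R^n$ with $\nabla\widetilde f^{a}(\bar x)^{\top}s\prec_{\widetilde K}0$. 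Before invoking this, I would note that $\widetilde K=\Pi_{j=1}^{\bar\omega}K$ inherits closedness, convexity, pointedness, and solidity from $K$, so that $\prec_{\widetilde K}$ is well defined and the critical-point notion is legitimately available for $(\mathcal{VOP}_a)$.

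The key step is the componentwise computation from \eqref{vector_valued},
\[
\nabla\widetilde f^{a}(\bar x)^{\top}s=\bigl(\nabla f^{a_1}(\bar x)^{\top}s,\ \ldots,\ \nabla f^{a_{\bar\omega}}(\bar x)^{\top}s\bigr)\in\Pi_{j=1}^{\bar\omega}\mathbb R^{m},
\]
together with the elementary identity $\operatorname*{int}(\widetilde K)=\Pi_{j=1}^{\bar\omega}\operatorname*{int}(K)$, valid for finite products of solid cones. Hence $\nabla\widetilde f^{a}(\bar x)^{\top}s\in-\operatorname*{int}(\widetilde K)$ if and only if $\nabla f^{a_j}(\bar x)^{\top}s\in-\operatorname*{int}(K)$ for every $j\in[\bar\omega]$; that is, the $\widetilde K$-feasibility system for $(\mathcal{VOP}_a)$ coincides, for that same $a$, with the $K$-system of the preceding paragraph. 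Consequently $\bar x$ is a $\widetilde K$-critical point of $(\mathcal{VOP}_a)$ exactly when that $K$-system has no solution $s\in\mathbb R^n$.

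Putting the two characterizations side by side finishes the argument: $\bar x$ is a $K$-critical point of \eqref{fgcx} iff the displayed system is unsolvable for \emph{every} $a\in P_{\bar x}$, iff $\bar x$ is a $\widetilde K$-critical point of $(\mathcal{VOP}_a)$ for \emph{every} $a\in P_{\bar x}$, which is the claim. I do not anticipate a substantive obstacle here; the only points needing a written line of justification are the interior-of-a-finite-product identity and the check that $\widetilde K$ is again a closed, pointed, convex, solid cone, after which the statement is pure bookkeeping on the universal quantifier over $P_{\bar x}$, which occurs identically in both formulations.
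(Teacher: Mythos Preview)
Your argument is correct and is precisely the natural unwinding of the two definitions via the product-cone identity $\operatorname*{int}(\widetilde K)=\prod_{j=1}^{\bar\omega}\operatorname*{int}(K)$; the paper itself does not spell this out but simply cites \cite[Lemma~3.1]{bouza2019unified}, whose proof proceeds along exactly the lines you describe. In effect you have supplied the details the paper omits by reference.
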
 
\begin{proof}
Similar to \cite[Lemma 3.1]{bouza2019unified}.  
\end{proof}

\begin{definition}\label{oriented_distance}
\cite{Ansari2018} Let $\mathcal{A}\in \mathscr{P}(\mathbb R^{m})$. The function $\Delta_{\mathcal{A}}: \mathbb R^{m} \to \mathbb R  \cup \{\pm \infty\}$ defined by $
    \Delta_{\mathcal A}(y): = d_{\mathcal A}(y) -d_{\mathcal A^{c}}(y)$,
is called the oriented distance function, where $d_{\mathcal A}(y):=\inf\limits_{a \in \mathcal{A}}\lVert y-a \rVert$. 
\end{definition}
  
\begin{lemma} \emph{\cite{Ansari2018, zaffaroni}} \label{proori}
\begin{enumerate}[(i)]
     \item\label{aeruin} $\Delta_{-K}$ is Lipschitz continuous with Lipschitz constant $1$. 
     \item\label{fyf} $\Delta_{-K}(y)< 0$ if and only if $ y \in \operatorname*{int}(-K)$. 
     \item\label{hryx} $\Delta_{-K}(y)=0$ if and only if $ y \in \operatorname*{bd}(-K) $. 
     \item\label{ufbmvfd} $\Delta_{-K}(y)> 0$ if and only if $y \in \operatorname*{int}[(-K)^{c}] $. 
     \item \label{ubrsyu} $\Delta_{-K}(\lambda y)=\lambda \Delta_{\lambda^{-1}(-K)} (y)$ for all $ y \in \mathbb{R}^{m}$ and $ \lambda >0$. 
     \item \label{tewyueui}  $ \Delta_{-K}(y_{1}+y_{2})\le \Delta_{-K}(y_{1})+\Delta_{-K}(y_{2})$ and $\Delta_{-K}(y_{1})-\Delta_{-K}(y_{2})\le \Delta_{-K}(y_{1}-y_{2})$ for all $y_{1},y_{2}\in \mathbb R^{m}$.
     \item \label{htytuttd} Given $ y_{1}, y_{2} \in \mathbb R^{m}$, if  $y_{1} \prec_K y_{2}$ (resp., $y_{1} \preceq_K y_{2}$), then $\Delta_{-K}(y_{1}) < \Delta_{-K}(y_{2})$ (resp., $\Delta_{-K}(y_{1}) \le \Delta_{-K}(y_{2})$).
\end{enumerate}
\end{lemma}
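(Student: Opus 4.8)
The plan is to extract items (ii)--(iv) from a single membership dichotomy, to obtain (i) and (v) by elementary manipulations of the two distance functions, and to deduce (vi)--(vii) from the sublinearity of $\Delta_{-K}$, which is available because $-K$ is a closed convex cone. The organizing observation I would record first is that every $y\in\mathbb R^{m}$ lies in $\overline{-K}=-K$ or in $\overline{(-K)^{c}}$, so at least one of $d_{-K}(y)$ and $d_{(-K)^{c}}(y)$ vanishes; consequently $\Delta_{-K}(y)=d_{-K}(y)$ whenever $y\notin -K$ and $\Delta_{-K}(y)=-d_{(-K)^{c}}(y)$ whenever $y\in -K$. From this the sign of $\Delta_{-K}(y)$ detects the position of $y$ relative to $-K$: one gets $\Delta_{-K}(y)>0\iff d_{-K}(y)>0\iff y\in\operatorname*{int}[(-K)^{c}]$ (item (iv), since $-K$ is closed); $\Delta_{-K}(y)<0\iff d_{(-K)^{c}}(y)>0\iff y\notin\overline{(-K)^{c}}=(\operatorname*{int}(-K))^{c}$, that is, $y\in\operatorname*{int}(-K)$ (item (ii)); and $\Delta_{-K}(y)=0\iff d_{-K}(y)=d_{(-K)^{c}}(y)=0\iff y\in(-K)\cap\overline{(-K)^{c}}=\operatorname*{bd}(-K)$ (item (iii)).

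For (i), I would use that $\Delta_{-K}$ is continuous (a difference of the globally $1$-Lipschitz maps $d_{-K}$ and $d_{(-K)^{c}}$), that $\Delta_{-K}$ coincides with $d_{-K}$ on $\{\,\Delta_{-K}\ge 0\,\}$ and with $-d_{(-K)^{c}}$ on $\{\,\Delta_{-K}\le 0\,\}$, and hence is $1$-Lipschitz on each of these two sets separately; for a pair $y_{1},y_{2}$ lying in different sets, I would insert a point $z$ on the segment $[y_{1},y_{2}]$ with $\Delta_{-K}(z)=0$ (intermediate value theorem), split the increment $\Delta_{-K}(y_{1})-\Delta_{-K}(y_{2})$ at $z$, and add the two estimates using $\lVert y_{1}-z\rVert+\lVert z-y_{2}\rVert=\lVert y_{1}-y_{2}\rVert$. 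Item (v) is a change of variables: substituting $a=\lambda a'$ in the defining infima gives $d_{-K}(\lambda y)=\lambda\,d_{\lambda^{-1}(-K)}(y)$ and $d_{(-K)^{c}}(\lambda y)=\lambda\,d_{(\lambda^{-1}(-K))^{c}}(y)$ for $\lambda>0$, and subtracting yields the stated identity; since $-K$ is a cone, $\lambda^{-1}(-K)=-K$, so in particular $\Delta_{-K}$ is positively homogeneous, a fact I would keep for the next step.

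For (vi), the crucial ingredient is that $\Delta_{-K}$ is subadditive, which I would get from sublinearity: $\Delta_{-K}$ is positively homogeneous (just noted) and convex, the convexity being a consequence of $-K$ being convex --- most transparently through either the representation $\Delta_{-K}(y)=\inf\{\,r\in\mathbb R:y\in(-K)+rB\,\}$ (with $B$ the closed unit ball and negative $r$ interpreted as an erosion of $-K$), or the support-function representation $\Delta_{-K}(y)=\max\{\,\langle\xi,y\rangle:\xi\in K^{*},\ \lVert\xi\rVert=1\,\}$, where $K^{*}$ is the dual cone of $K$; the latter, being a maximum of linear functionals, also makes (i) and (v) transparent. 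Subadditivity then gives $\Delta_{-K}(y_{1})=\Delta_{-K}\big((y_{1}-y_{2})+y_{2}\big)\le\Delta_{-K}(y_{1}-y_{2})+\Delta_{-K}(y_{2})$, which is the second inequality of (vi). Finally, for (vii): if $y_{1}\prec_{K}y_{2}$ then $y_{1}-y_{2}\in\operatorname*{int}(-K)$, so $\Delta_{-K}(y_{1}-y_{2})<0$ by (ii), and combining with (vi) yields $\Delta_{-K}(y_{1})-\Delta_{-K}(y_{2})<0$; the non-strict version is identical, using $y_{1}-y_{2}\in -K$ together with (iii)--(iv) to get $\Delta_{-K}(y_{1}-y_{2})\le 0$.

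The step I expect to be the main obstacle is the subadditivity in (vi): the naive idea of working with $\Delta_{-K}=d_{-K}-d_{(-K)^{c}}$ termwise fails, since $d_{-K}$ is subadditive but $d_{(-K)^{c}}$ is not superadditive (an explicit two-point example in $\mathbb R^{2}$ with $K=\mathbb R^{2}_{+}$ already breaks it). So one genuinely needs a global argument --- either the convexity of $\Delta_{-K}$ for a convex cone, or the dual-support representation, whose delicate point is that the supremum must run over the unit \emph{sphere} of $K^{*}$ rather than its unit ball, precisely so that the negative values of $\Delta_{-K}$ on $\operatorname*{int}(-K)$ are reproduced. This is exactly the content for which \cite{Ansari2018,zaffaroni} are invoked, and in the interest of brevity I would cite those works for (i), (v) and (vi) rather than reproduce the full derivations.
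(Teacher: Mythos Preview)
The paper does not prove this lemma: it is stated with the citation \cite{Ansari2018, zaffaroni} and no argument is given. Your outline is correct and supplies precisely the details the paper delegates to the literature; in particular, your closing remark that you would cite those references for items (i), (v) and (vi) is exactly what the paper does for the entire lemma. The one place worth tightening is the support-function representation $\Delta_{-K}(y)=\max\{\langle\xi,y\rangle:\xi\in K^{*},\ \lVert\xi\rVert=1\}$: this is indeed Zaffaroni's result, but it requires $K$ to be closed, convex, pointed and solid (all assumed in the paper's standing hypotheses on $K$), so if you use it you should flag that dependence rather than present it as a general identity for arbitrary cones.
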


\section{Trust-region method for set optimization}\label{Section3}

According to Lemma \ref{rtyrrsv}, any $K$-critical point of \eqref{fgcx} is a $\widetilde{K}$-critical point of \eqref{vec_pr} and vice-versa. Therefore, we may get tempted to think that the set optimization problem \eqref{fgcx} with the objective map given by finitely many vector-valued functions is just a vector optimization problem, and thus, all the existing methods for vector optimization problems can be directly applied to solve \eqref{fgcx}. However, this is not true. The reason is as follows. Note that the formulation of \eqref{vec_pr} depends on the partition set $P_{\bar x}$ at the point $\bar x$. If we aim to reach a $K$-critical point $x^\star$ of \eqref{fgcx} through an iterative sequence $\{x_k\}$ that converges to $x^\star$, then note that at each iterative point $x_k$, the formulation of \eqref{vec_pr} gets changed because the partition set $P_{x_k}$ gets changed across the iterates. Neatly, the vector optimization problem analogous to \eqref{vec_pr} at $x_k$ is the following problem:
\begin{equation*}\label{vop_at_x_k}
(\preceq_{{K^{\omega_k}}}) \text{--} \min_{x \in \mathbb R^{n}}  {\widetilde{f}}^{a^k}(x),~ a^{k} \in P_{x_k},
\tag{\text{$\mathcal{VOP}_{a^k}(x_k)$}}   
\end{equation*}
and commonly, the problem \eqref{vop_at_x_k} at $x_k$ is different than that at $x_{k + 1}$. Therefore, solving \eqref{fgcx} does not essentially amount to solving just one vector optimization problem \eqref{vec_pr} for all $a \in P_{x^\star}$.

On the other hand, from the observation that `if $\bar x$ is not a $\widetilde{K}$-critical point of \eqref{vec_pr} for at least one $a \in P_{\bar x}$, then $\bar x$ is not a $K$-critical point of \eqref{fgcx},' one may perceive that the computational effort in computing $P_{\bar x}$ and then solving \eqref{vec_pr} at a not $\widetilde{K}$-critical point is of no use to capture $K$-critical points of \eqref{fgcx}. This is also not true. In the following, we explore how the formulation of \eqref{vec_pr} can be exploited to identify \emph{a sequence of vector optimization problems} \eqref{vop_at_x_k} at noncritical points $x_k$'s, solving which we can arrive at a $K$-critical point of \eqref{fgcx}.

We start the process from any arbitrarily chosen initial point $x_0$. Through the trust-region iterative scheme that we propose below, we aim to generate a sequence $\{x_k\}$ that converges to a $K$-critical point of \eqref{fgcx}. Below, we describe how we proceed from  $x_{k}$ to $x_{k + 1}$. For this, we derive a direction of movement $s_{k}$ and then take the next iterate $x_{k+1}$ as $x_k + s_k$. The process of searching a direction $s_{k}$ at the $k$-th iterate is the following. At first, at the current iterate $x_{k}$, we identify the partition set $P_{x_k}$. Then, we choose \emph{tactfully} an $a^k$ from $P_{x_k}$ and formulate the corresponding \eqref{vop_at_x_k}; a $K^{\omega_k}$-descent direction $s_k$ of \eqref{vop_at_x_k} happens to be a 
$\preceq^{l}_{K}$-descent direction of (\ref{fgcx}) at $x_k$ (thanks to Proposition \ref{vop_descent_implies_descent_sop}). The process of choosing such an $a^k$ from $P_{x_k}$ is described below, which is based on identifying a necessary optimality condition of a weakly $\preceq^{l}_{K}$-minimal element of (\ref{fgcx}).  Throughout the rest of the paper, we denote $P_k := P_{x_k}$ and $\omega_k := \omega(x_k) = |P_k|$.

 \subsection{Choice of `\texorpdfstring{$a^k$}{Lg}' from \texorpdfstring{$P_{k}$}{Lg}}\label{subsect_31}
Before choosing an $a$ from $P_{{k}}$, we present a necessary condition for weak $\preceq^{l}_{K}$-minimal solution of 
(\ref{fgcx}). As a weak $\preceq^{l}_{K}$-minimal solution of (\ref{fgcx}) is a $K$-critical point of (\ref{fgcx}), from Definition \ref{spcritic}, we see that
\begin{align*}
    & ~\text{a point}~ x_{k}~ \text{is a $K$-critical point of} ~(\ref{fgcx})~\\
\Longleftrightarrow  & ~\text{for any} ~a \in P_{k} ~\text{and}~ s \in \mathbb R^{n},~\exists j_{0} \in [\omega_{k}]~ \text{such that}~\Delta_{-K}(\nabla f^{a_{j_{0}}}(x_{k})^{\top}s) \ge 0.  
\end{align*}
So, at a $\preceq^{l}_{K}$-critical point $x_{k}$, for any $a \in P_{k}$ and $s \in \mathbb R^{n}$, there exists $j_{0} \in [\omega_{k}]$ such that 
\begin{align}
    ~&~ \max_{j \in [\omega_{k}]} \left\{\Delta_{-K}(\nabla f^{a_{j}}(x_{k})^{\top}s + \tfrac{1}{2}s^{\top}\nabla^{2} f^{a_{j}}(x_{k})s), \Delta_{-K}(\nabla f^{a_{j}}(x_{k})^{\top}s) \right\} 
  \label{tdhfefurw}\\
  \ge ~&~ \Delta_{-K}(\nabla f^{a_{j_{0}}}(x_{k})^{\top}s) \ge 0 \nonumber.
\end{align} 
Let $\Omega_{\max}$ be the maximum allowed trust-region step-size across the iterates $x_{k}$'s. Denote $\mathcal{B} : = \{ s \in \mathbb R^{n}: \lVert s \rVert \le \Omega_{\max}\}$.
Thus, for any given $ x \in \mathbb R^{n}$, if we introduce a function  $\Theta_{x}: P_{x} \times \mathcal{B} \rightarrow \mathbb R$ by 
\begin{align}\label{namfun}
 &  \Theta_{x}(a,s) = \max_{j \in [\omega(x)]} \left\{ \Delta_{-K} \left(\nabla f^{a_{j}}(x)^{\top}s+\tfrac{1}{2}s^{\top}\nabla^{2} f^{a_{j}}(x)s\right), \Delta_{-K}(\nabla f^{a_{j}}(x)^{\top}s) \right\},  
\end{align}
where $a \in P_{x}$ and $s \in \mathcal{B}$, then by (\ref{tdhfefurw}), at a $K$-critical point $x_{k}$ of (\ref{fgcx}), we have 
 \begin{align}
     &\Theta_{x_{k}}(a, s) \ge 0  ~\forall~ a \in P_{k} ~\text{and}~ s \in \mathcal{B} \nonumber \\ 
  \implies & \forall a \in P_{k} : \inf_{s \in \mathcal{B}} \Theta_{x_{k}}(a, s) = 0 =  \Theta_{x_{k}} (a, 0) \label{yweqyc}.
 \end{align}
As $\mathcal{B}$ is compact, and for any $x \in \mathbb R^{n}$, the partition set $P_{x}$ is  finite, $\Theta_{x}$ attains its minimum over the set $P_{x} \times \mathcal{B}$. Let us define a function $\theta: \mathbb R^{n} \to \mathbb R$ by 
\begin{align}\label{rbdwbyew}
     \theta(x) := \min_{(a, s) \in P_{x} \times \mathcal {B}} \Theta_{x}(a,s).
 \end{align}
In view of (\ref{yweqyc}), at a $K$-critical point $x_{k}$ of (\ref{fgcx}), if for $({a}^{k}, {s}_{k}) \in P_{k} \times \mathcal{B}_{k}$ we have $\theta({x_k}) = \Theta_{x_{k}}( {a^{k}},{s_{k}}) $, then $ \theta(x_k)=0$, 
where $\mathcal{B}_k := \{x\in \mathbb{R}^n: \|x - x_k\| \le \Omega_k\}$ and $\Omega_{k}>0$. Hence, we obtain the following result. 

\begin{proposition}[Necessary condition for weakly $\preceq^{l}_{K}$-minimal points]\label{nrwelyd} If ${x_k}$ is either a weak $\preceq^{l}_{K}$-minimal point or a $K$-critical point of (\ref{fgcx}), and ${a}^{k} \in P_{k}$ and $ s_{k} \in \mathcal{B}$ be such that $\theta({x_k}) = \Theta_{x_{k}}( {a}^{k}, {s}_{k}) $, where $\Theta_{x}$ and $\theta$ are as defined in (\ref{namfun}) and (\ref{rbdwbyew}), respectively, then $\theta(x_{k}) =0.$
\end{proposition}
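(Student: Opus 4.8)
The plan is to show that under either hypothesis the value $\theta(x_k)$ cannot be negative, and then to observe that it cannot be positive either, forcing $\theta(x_k) = 0$. The proof essentially reassembles the chain of implications already carried out in the discussion preceding the statement, so my job is mostly to make that reasoning airtight and self-contained.

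First I would treat the case where $x_k$ is a $K$-critical point of \eqref{fgcx}. By Definition \ref{spcritic}, for every $a \in P_k$ and every $s \in \mathbb{R}^n$ there exists an index $j_0 \in [\omega_k]$ with $\nabla f^{a_{j_0}}(x_k)^\top s \not\prec_K 0$, i.e.\ $\nabla f^{a_{j_0}}(x_k)^\top s \notin \operatorname{int}(-K)$. By Lemma \ref{proori}\eqref{fyf} this is equivalent to $\Delta_{-K}(\nabla f^{a_{j_0}}(x_k)^\top s) \ge 0$. Since the maximum in \eqref{namfun} is taken over both $\Delta_{-K}(\nabla f^{a_j}(x_k)^\top s + \tfrac12 s^\top \nabla^2 f^{a_j}(x_k) s)$ and $\Delta_{-K}(\nabla f^{a_j}(x_k)^\top s)$ for all $j \in [\omega_k]$, the term for $j = j_0$ already contributes a nonnegative value, hence $\Theta_{x_k}(a,s) \ge 0$ for all $a \in P_k$ and all $s \in \mathcal{B}$. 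Taking the minimum over $P_k \times \mathcal{B}$ in \eqref{rbdwbyew} gives $\theta(x_k) = \Theta_{x_k}(a^k, s_k) \ge 0$.

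Next I would handle the case where $x_k$ is a weak $\preceq^l_K$-minimal point of \eqref{fgcx}; here I would simply invoke the remark in the excerpt that a weakly $\preceq^l_K$-minimal solution of \eqref{fgcx} is a $K$-critical point of \eqref{fgcx}, reducing this case to the one already done. It then remains to rule out $\theta(x_k) > 0$. For this I evaluate $\Theta_{x_k}$ at $s = 0$: for every $j$, $\nabla f^{a_j}(x_k)^\top 0 = 0 \in \mathbb{R}^m$ and $0^\top \nabla^2 f^{a_j}(x_k) 0 = 0$, so both arguments of $\Delta_{-K}$ are the zero vector, and $\Delta_{-K}(0) = 0$ by Lemma \ref{proori}\eqref{hryx} (since $0 \in \operatorname{bd}(-K)$ as $K$ is a pointed closed convex solid cone). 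Hence $\Theta_{x_k}(a, 0) = 0$ for every $a \in P_k$, and since $0 \in \mathcal{B}$ (because $\Omega_{\max} > 0$), we get $\theta(x_k) = \min_{(a,s) \in P_k \times \mathcal{B}} \Theta_{x_k}(a,s) \le \Theta_{x_k}(a^k_{\star}, 0) = 0$ for any fixed $a^k_\star \in P_k$. Combining with the lower bound yields $\theta(x_k) = 0$, and since by hypothesis $\theta(x_k) = \Theta_{x_k}(a^k, s_k)$, the conclusion follows.

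The argument is essentially bookkeeping, so there is no serious obstacle; the only point requiring mild care is the direction of the equivalence in Lemma \ref{proori}\eqref{fyf} — one must use that $\nabla f^{a_{j_0}}(x_k)^\top s \notin \operatorname{int}(-K)$ is equivalent to $\Delta_{-K}(\nabla f^{a_{j_0}}(x_k)^\top s) \ge 0$ via the trichotomy in parts \eqref{fyf}, \eqref{hryx}, \eqref{ufbmvfd} of Lemma \ref{proori}, rather than reading off only one of those parts. One should also be careful that the second-order term $\tfrac12 s^\top \nabla^2 f^{a_j}(x_k) s$ plays no role in establishing the lower bound $\theta(x_k)\ge 0$: it is the presence of the plain first-order term $\Delta_{-K}(\nabla f^{a_j}(x_k)^\top s)$ inside the max in \eqref{namfun} that does the work, which is exactly why \eqref{namfun} was defined with both terms.
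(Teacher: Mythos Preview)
Your proof is correct and follows essentially the same route as the paper: the proposition is stated as a direct consequence of the chain of inequalities \eqref{tdhfefurw}--\eqref{yweqyc} developed in the discussion just before it, which is precisely what you have reassembled. Your added remarks on using the trichotomy in Lemma~\ref{proori}\eqref{fyf}--\eqref{ufbmvfd} and on why the first-order term inside the max is what delivers $\theta(x_k)\ge 0$ make explicit points the paper leaves implicit, but the argument is the same.
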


As a consequence of Proposition \ref{nrwelyd} and discussion in this  Subsection \ref{subsect_31}, 
if we can select an ${a}^{k} := \left({a}^{k}_1, {a}^{k}_2, \ldots,  {a}^{k}_{\omega_{k}}\right)$ from the partition $ {P}_{{k}}$ of the current iterate $x_{k}$ such that $\theta({x_k}) = \Theta_{x_{k}}( {a^{k}},{s_{k}})$, i.e.,
\begin{align}\label{dg_aux_17_01_2}
   &({a}^{k},  s_{k}) \in \underset{(a, s)\in P_{k}\times \mathcal B_{k}}{\operatorname{argmin}}  \Theta_{x_k}(a, s), 
\end{align}
then $\theta(x_{k}) \neq 0$ implies that $x_{k}$ is not a $K$-critical point of (\ref{fgcx}).  
Next, we employ a trust-region scheme to solve (\ref{vop_at_x_k}) for the particular ${a}^{k}\in P_{k}$ which satisfies \eqref{dg_aux_17_01_2}. Thereby, we get a trust-region step $s_k$ to progress to the next step $x_{k+1}:=x_k + s_k$. 

\subsection{Choice of the trust-region step `\texorpdfstring{$s_k$}{Lg}'}

Once an $a^k$ is chosen that satisfies \eqref{dg_aux_17_01_2}, we attempt to find a descent direction, if it exists, at $x_k$ of the objective function of (\ref{vop_at_x_k}) in $\mathcal{B}_k$.

We aim to choose an $s_k$ from $\mathcal{B}_k$ such that $(a^k, s_k)$ satisfies \eqref{dg_aux_17_01_2}. If with this $(a^k, s_k)$ we get $\Theta_{x_k}(a^k, s_k) = 0$, then we have reached a point $x_k$ at which a necessary condition for $K$-critical points of \eqref{fgcx} is satisfied, and we stop the process. If, however, $\Theta_{x_k}(a^k, s_k) \neq 0$, then $x_k$ is not a $K$-critical point of \eqref{fgcx}, and in this case, we revise the trust-region radius and the current iterate as described in the next subsections. 

For a point $x_k$ that is not $K$-critical of  \eqref{fgcx}, the model function of the objective function ${\widetilde{f}}^{{a}^{k}} := (f^{{a}^{k}_{1}}, f^{{a}^{k}_{2}}, \ldots, f^{{a}^{k}_{\omega_k}})^\top$ of \eqref{vop_at_x_k} inside $\mathcal{B}_k$ is taken as the following quadratic approximation ${\widetilde{m}}^{{a}^{k}}: = (m^{{a}^{k}_{1}}, m^{{a}^{k}_{2}}, \ldots, m^{{a}^{k}_{\omega_k}})^\top$: 
\begin{align}\label{mode_tru}
m^{{a}^{k}_{j}}(s):= \nabla f^{{a}^{k}_{j}}(x_{k})^{\top}s+ \tfrac{1}{2}s^{\top}\nabla^{2}f^{{a}^{k}_{j}}(x_{k})s,~ s \in \mathcal{B}_k,~ \text{ for each } j \in [\omega_k],  
\end{align}
\begin{align*} 
\text{where } & \nabla f^{{a}^{k}_{j}}(x_{k})^{\top}s := \left( 
\nabla f^{{a}^{k}_{j},1}( x_{k})^{\top}s, 
\nabla f^{{a}^{k}_{j},2}( x_{k})^{\top}s,  
\ldots, 
\nabla f^{{a}^{k}_{j},m}( x_{k})^{\top}s \right)^\top \text{ and }  \\ 
& s^{\top}\nabla^{2} f^{{a}^{k}_{j}}(x_{k})s := \left(   s^{\top}\nabla^{2} f^{{a}^{k}_j,1}(x_{k})s, 
s^{\top}\nabla^{2} f^{{a}^{k}_j,1}(x_{k})s, 
 \ldots, 
 s^{\top} \nabla^{2} f^{{a}^{k}_j,m}(x_{k})s \right)^\top.
\end{align*}
To generate a search direction $s_{k}$ in $\mathcal{B}_k$,  we solve the following problem: 
\begin{equation}\label{model_pro}
\min\limits_{s \in\mathcal{B}_k}\max\limits_{j \in [ \omega_{k}]}
     m^{{a}^{k}_{j}}(s). 
\end{equation}
To find a weakly minimal solution (or weakly pareto optimal solution) of the problem \eqref{model_pro}, we find a solution of the following scalar representation of (\ref{model_pro}): 
\begin{equation}\label{uscop}
 \min\limits_{s \in \mathcal{B}_k}\max\limits_{j \in [ \omega_{k}]}  \Delta_{-{K}}\left(\nabla f^{{a}^{k}_{j}}(x_{k})^{\top}s+ \tfrac{1}{2}s^{\top}\nabla^{2}f^{{a}^{k}_{j}}(x_{k})s\right).
\end{equation}

\begin{lemma}\label{weakly_minimal_point_lemma}
Any solution $s_{k} \in \mathcal{B}_k$ of the problem (\ref{uscop}) is a weakly $K^{\omega_k}$-minimal point of the vector-valued function ${\widetilde{m}}^{{a}^{k}} (s) := \left(m^{{a}^{k}_{1}}(s),  m^{{a}^{k}_{2}}(s),\ldots,m^{{a}^{k}_{\omega_k}}(s)\right)^\top$ in $\mathcal{B}_k$. 
\end{lemma}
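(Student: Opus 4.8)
The plan is to argue by contradiction, exploiting the strict monotonicity of the oriented distance function recorded in Lemma \ref{proori}\eqref{htytuttd}. Suppose $s_k \in \mathcal{B}_k$ solves \eqref{uscop} but is \emph{not} a weakly $K^{\omega_k}$-minimal point of $\widetilde{m}^{a^k}$ in $\mathcal{B}_k$. By the definition of the (strict) ordering $\prec_{K^{\omega_k}}$ induced by the product cone $K^{\omega_k} = \Pi_{j=1}^{\omega_k} K$, this means there exists $\bar s \in \mathcal{B}_k$ with $\widetilde{m}^{a^k}(\bar s) \prec_{K^{\omega_k}} \widetilde{m}^{a^k}(s_k)$, i.e.\ $m^{a^k_j}(s_k) - m^{a^k_j}(\bar s) \in \operatorname*{int}(K)$ for every $j \in [\omega_k]$; equivalently $m^{a^k_j}(\bar s) \prec_K m^{a^k_j}(s_k)$ for all $j$.

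Next I would push these componentwise strict inequalities through $\Delta_{-K}$. By Lemma \ref{proori}\eqref{htytuttd}, $m^{a^k_j}(\bar s) \prec_K m^{a^k_j}(s_k)$ implies $\Delta_{-K}\bigl(m^{a^k_j}(\bar s)\bigr) < \Delta_{-K}\bigl(m^{a^k_j}(s_k)\bigr)$ for each $j \in [\omega_k]$. Taking the maximum over the finitely many indices $j$ preserves the strict inequality (the maximum of finitely many numbers, each strictly dominated, is strictly dominated), so
\begin{equation*}
\max_{j \in [\omega_k]} \Delta_{-K}\bigl(m^{a^k_j}(\bar s)\bigr) < \max_{j \in [\omega_k]} \Delta_{-K}\bigl(m^{a^k_j}(s_k)\bigr).
\end{equation*}
Recalling that $m^{a^k_j}(s) = \nabla f^{a^k_j}(x_k)^\top s + \tfrac12 s^\top \nabla^2 f^{a^k_j}(x_k) s$, the left side is exactly the objective of \eqref{uscop} evaluated at $\bar s$ and the right side at $s_k$. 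Since $\bar s \in \mathcal{B}_k$, this contradicts the assumption that $s_k$ is a minimizer of \eqref{uscop} over $\mathcal{B}_k$. Hence no such $\bar s$ exists, and $s_k$ is a weakly $K^{\omega_k}$-minimal point of $\widetilde{m}^{a^k}$ in $\mathcal{B}_k$.

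The only genuinely delicate point is making sure the cone used for the notion ``weakly $K^{\omega_k}$-minimal'' is indeed the product cone $\widetilde{K} = \Pi_{j=1}^{\omega_k} K$ (so that $\operatorname*{int}(\widetilde{K}) = \Pi_{j=1}^{\omega_k} \operatorname*{int}(K)$, which holds because the product is over finitely many factors), and that $\widetilde{K}$ inherits closedness, pointedness, convexity, and solidity from $K$ — this is already implicit in the statement of Lemma \ref{rtyrrsv} and is used there, so I would just invoke it. Everything else is a short monotonicity-and-maximum argument; there is no real obstacle, the proof is essentially a one-line consequence of Lemma \ref{proori}\eqref{htytuttd} once the definitions are unwound.
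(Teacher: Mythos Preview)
Your proof is correct and follows essentially the same scalarization idea as the paper: both arguments show that a strict $K$-domination in every component would force a strict decrease of the max-type objective of \eqref{uscop}, contradicting optimality of $s_k$. The only cosmetic difference is that you argue by contradiction and invoke the strict monotonicity Lemma~\ref{proori}\eqref{htytuttd} directly, whereas the paper argues directly (for every $s$ there is some $j$ with $m^{a^k_j}(s)\nprec_K m^{a^k_j}(s_k)$) and reaches that conclusion via the subadditivity property Lemma~\ref{proori}\eqref{tewyueui} together with \eqref{fyf}; your route is marginally cleaner but not substantively different.
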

\vspace{-0.5cm}
\begin{proof}
Let $s_{k}$ be an optimal solution of  problem (\ref{uscop}). Then, for all $s \in \mathcal{B}_k$, 
\allowdisplaybreaks
\begin{align*}
    & \max_{j \in [\omega_{k}]} \Delta_{-K}(m^{{a}^{k}_{j}}(s_{k})) \le \max_{j \in [\omega_{k}]} \Delta_{-K}(m^{{a}^{k}_{j}}(s))\\
   \implies &  0 \le \max_{j \in [\omega_{k}]} \Delta_{-K}(m^{{a}^{k}_{j}}(s))- \max_{j \in [\omega_{k}]} \Delta_{-K}(m^{{a}_{kj}}(s_k))\\ &~\le \max_{j \in [\omega_{k}]} \left(\Delta_{-K}(m^{{a}^{k}_{j}}(s)) -  \Delta_{-K}(m^{{a}^{k}_{j}}(s_k))\right)\\
   \implies & 0 \le \Delta_{-K}(m^{{a}^{k}_{j}}(s)) -  \Delta_{-K}(m^{{a}^{k}_{j}}(s_k))  \text{ for some } j \in [\omega_k] \\ 
   \implies & 0 \le \Delta_{-K} \left(m^{{a}^{k}_{j}}(s) -  m^{{a}^{k}_{j}}(s_k)\right)\text{ for some } j \in [\omega_k],    
   \text{ by Lemma \ref{proori}~(\ref{tewyueui})}\\
   \implies & m^{{a}^{k}_{j}}(s) - m^{{a}^{k}_{j}}(s_{k}) \notin - \text{int}(K) ~\text{for some}~ j \in [\omega_{k}], ~\text{by Lemma \ref{proori}~(\ref{fyf})}~\\
   \implies & m^{{a}^{k}_{j}}(s) \nprec_{K} m^{{a}^{k}_{j}}(s_{k}) ~\text{for some}~ j \in [\omega_{k}].
   \end{align*} 
Then, there exists no $s \in \mathcal{B}_k$ such that $m^{{a}^{k}_{j}}(s) \prec_K m^{{a}^{k}_{j}}(s_{k})$ for all $j \in [\omega_k]$. Therefore, $s_{k}$ is a weakly $K^{\omega_k}$-minimal point of ${\widetilde{m}}^{{a}^{k}}$ in $\mathcal{B}_k$. 
\end{proof}
As the objective function of the problem \eqref{uscop} is nonsmooth, we recast it by  
\begin{equation}\label{ghtgdyu}
\left\{\begin{aligned}
   &{\min }& & t\\
   &\text{subject to}& &\Delta_{-{K}}\left(\nabla f^{{a}^{k}_{j}}(x_{k})^{\top}s+ \tfrac{1}{2}s^{\top}\nabla^{2}f^{{a}^{k}_{j}}(x_{k})s\right) \le t,~ j = 1,2, \ldots, {\omega}_{k},\\
  &&& \lVert s \rVert \le \Omega_{k}. 
 \end{aligned} 
 \right. 
\end{equation}
However, as explained in \cite[Example 1]{Trrionglion}, to overcome the difficulty in finding a $K^{\omega_k}$-descent direction for a nonconvex problem \eqref{vop_at_x_k} at the current iterate $x_{k}$, we add a sublinear inequality in the constraint set of \eqref{ghtgdyu} as below: 
\begin{equation}\label{ghtyu}
\left\{\begin{aligned} 
&\min  & & t\\
   & \text{subject to} & &\Delta_{-{K}}\left(\nabla f^{{a}^{k}_j}(x_{k})^{\top}s+ \tfrac{1}{2}s^{\top}\nabla^{2}f^{{a}^{k}_j}(x_{k})s\right) \le t, ~j = 1,2, \ldots, {\omega}_{k},\\
 & & &\Delta_{-{K}}\left(\nabla f^{{a}^{k}_{j}}(x_{k})^{\top}s\right)  \le t, ~j =1, 2, \ldots, {\omega}_{k}, \\
 & & & \lVert s \rVert \le \Omega_{k}. 
 \end{aligned}
 \right. 
\end{equation}
Observe that the problem (\ref{ghtyu}) can be compactly rewritten as 
\begin{align}\label{dg_aux_18_01_1}
 \min\limits_{s \in \mathcal B_{k}}\max\limits_{j \in [ \omega_{k}]}\left[ \Delta_{-{K}}\left(\nabla f^{{a}^{k}_{j}}(x_{k})^{\top}s+ \tfrac{1}{2}s^{\top}\nabla^{2}f^{{a}^{k}_{j}}(x_{k})s\right),~ \Delta_{-K}(\nabla f^{{a}^{k}_{j}}(x_{k})^{\top}s) \right].
\end{align}
It is noteworthy that the objective function of the problem \eqref{dg_aux_18_01_1} is the objective function in \eqref{dg_aux_17_01_2}. Hence, any $s_k$ that is found by solving the problem (\ref{ghtyu}) essentially satisfies \eqref{dg_aux_17_01_2}.  

\subsection{Calculation of reduction ratio}\label{subsection_reduction_ratio}
Once a step $s_k$ is identified, by solving \eqref{ghtyu}, from the trust-region $\mathcal{B}_k$, we aim to check if the direction $s_k$ is a $\preceq^{l}_{K}$-descent direction for $F: \mathcal{B}_k \rightrightarrows \mathbb{R}^m$ at $x_k$; if $\preceq^{l}_{K}$-descent, then we may update the iterate by $x_{k + 1} := x_k + s_k$. To check if the identified $s_k$ is a $\preceq^{l}_{K}$-descent direction, we evaluate decrease of values of component functions in ${\widetilde{f}}^{{a}^{k}}$ for the movement of the argument point from $x_k$ to $x_k + s_k$, which can be executed by $ - \Delta_{- K}(f^{{a}^{k}_{j}}(x_{k}+s_{k})-f^{{a}^{k}_{j}}(x_{k}))$. Note that for $- \Delta_{- K}(f^{{a}^{k}_{j}}(x_{k}+s_{k})-f^{{a}^{k}_{j}}(x_{k})) > 0$, 
\[0 < - \Delta_{- K}(f^{{a}^{k}_{j}}(x_{k}+s_{k})-f^{{a}^{k}_{j}}(x_{k})) \le \Delta_{- K}(f^{{a}^{k}_{j}}(x_{k})) - \Delta_{- K}(f^{{a}^{k}_{j}}(x_{k}+s_{k})),\]
i.e., $f^{{a}^{k}_{j}}(x_{k}+s_{k}) \prec_K f^{{a}^{k}_{j}}(x_{k})$, and hence $s_k \in \mathcal{B}_k$ is a step along a $K$-descent direction of $f^{{a}^{k}_{j}}$ at $x_k$. Thus, we call the value of $- \Delta_{- K}(f^{{a}^{k}_{j}}(x_{k}+s_{k})-f^{{a}^{k}_{j}}(x_{k}))$ as a measurement of the \emph{actual reduction} of the vector-valued function $f^{{a}^{k}_{j}}$ due to $s_k$.

A \emph{predicted value of the reduction} of $f^{{a}^{k}_{j}}$ due to the step $s_k$ at $x_k$ is measured by a measurement of the movement of the function value of its model function $m^{{a}^{k}_{j}}$, which we measure by the positive value $\Delta_{-K}
({m}^{{a}^{k}_{j}}(0)-{m}^{{{a}^{k}_{j}}}(s_{k}))$. 

With the values of the measurements of the actual reduction and predicted reduction, we define a \emph{reduction ratio} $\rho^{{a}^{k}_{j}}_{k}$ as follows: for all $j \in [\omega_{k}]$, 
\begin{align*}
&\sigma^{{a}^{k}_{j}}_{k} := ~\frac{\Delta_{- K}(f^{{a}^{k}_{j}}(x_{k}+s_{k})-f^{{a}^{k}_{j}}(x_{k}))}{\Delta_{-K}
({m}^{{a}^{k}_{j}}(0)-{m}^{{{a}^{k}_{j}}}(s_{k}))}  ~\text{ and }~\rho^{{a}^{k}_{j}}_{k} :=~ \frac{\text{actual reduction}}{\text{predicted reduction}} =- \sigma^{{a}^{k}_{j}}_{k}.   
\end{align*}

\begin{proposition}\label{descveri}
Let $x_k$ be not a $K$-critical point of \eqref{fgcx}. Then, for any $j \in [\omega_k]$, a trust-region step $s_{k}$ satisfies $f^{{a}^{k}_{j}}(x_k + s_k) \prec_K f^{{a}^{k}_{j}}(x_k)$ if and only if $\rho^{{a}^{k}_{j}}_{k} >0$. 
\end{proposition}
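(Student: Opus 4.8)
The plan is to read off from the definitions that $\rho^{a^k_j}_k$ is exactly the negative of a fraction whose denominator is strictly positive as soon as $x_k$ is not $K$-critical; granting that, the asserted equivalence collapses to a single chain of equivalences for the oriented distance function. Accordingly I would split the argument into two parts: (i) positivity of the denominator $\Delta_{-K}\big(m^{a^k_j}(0)-m^{a^k_j}(s_k)\big)$, and (ii) translating the sign of the numerator $\Delta_{-K}\big(f^{a^k_j}(x_k+s_k)-f^{a^k_j}(x_k)\big)$ into the dominance relation $\prec_K$.

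For (i), I would first note that $m^{a^k_j}(0)=0$ and $\Delta_{-K}(0)=0$ (since $0\in\operatorname*{bd}(-K)$, Lemma~\ref{proori}(\ref{hryx})), so the denominator equals $\Delta_{-K}\big(-m^{a^k_j}(s_k)\big)$. Because $s_k$ is obtained by solving \eqref{ghtyu}, the pair $(a^k,s_k)$ satisfies \eqref{dg_aux_17_01_2}, hence $\Theta_{x_k}(a^k,s_k)=\theta(x_k)$; and since $x_k$ is not $K$-critical while $\theta(\cdot)\le 0$ always (the value $0$ is attained at $s=0$), we get $\theta(x_k)<0$. Consequently $\Delta_{-K}\big(m^{a^k_j}(s_k)\big)\le\Theta_{x_k}(a^k,s_k)=\theta(x_k)<0$ for every $j\in[\omega_k]$. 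Applying the reverse-triangle clause of Lemma~\ref{proori}(\ref{tewyueui}) with $y_1=0$ and $y_2=m^{a^k_j}(s_k)$ yields $\Delta_{-K}(0)-\Delta_{-K}\big(m^{a^k_j}(s_k)\big)\le\Delta_{-K}\big(-m^{a^k_j}(s_k)\big)$, whose left-hand side is strictly positive by the preceding inequality; therefore the denominator is $>0$.

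For (ii), with the denominator positive I would observe that $\rho^{a^k_j}_k=-\sigma^{a^k_j}_k>0$ holds if and only if the numerator $\Delta_{-K}\big(f^{a^k_j}(x_k+s_k)-f^{a^k_j}(x_k)\big)<0$, which by Lemma~\ref{proori}(\ref{fyf}) is equivalent to $f^{a^k_j}(x_k+s_k)-f^{a^k_j}(x_k)\in\operatorname*{int}(-K)$, that is to $f^{a^k_j}(x_k)-f^{a^k_j}(x_k+s_k)\in\operatorname*{int}(K)$, that is to $f^{a^k_j}(x_k+s_k)\prec_K f^{a^k_j}(x_k)$; reading the chain in both directions gives the proposition. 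The one genuinely substantive point is the strict negativity $\theta(x_k)<0$ at a non-critical $x_k$: the inequality $\theta(x_k)\le 0$ is immediate, and $\theta(x_k)\ne 0$ follows from the characterization of $K$-criticality through $\theta$ — picking $\bar a\in P_k$ and $\bar s$ with $\nabla f^{\bar a_j}(x_k)^\top\bar s\prec_K 0$ for all $j$, scaling $\bar s$ by a small $\lambda>0$ so that $\lambda\bar s\in\mathcal{B}_k$, and using $\Delta_{-K}(\lambda y)=\lambda\Delta_{-K}(y)$ (Lemma~\ref{proori}(\ref{ubrsyu})) together with the $1$-Lipschitz control on the quadratic remainder to make both entries defining $\Theta_{x_k}(\bar a,\lambda\bar s)$ negative. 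Everything else is routine bookkeeping with Lemma~\ref{proori}.
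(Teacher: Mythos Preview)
Your proof is correct and follows essentially the same route as the paper: both use that $s_k$ solves \eqref{ghtyu} to bound $\Delta_{-K}(m^{a^k_j}(s_k))$ from above, then invoke Lemma~\ref{proori}(\ref{tewyueui}) to control the denominator and Lemma~\ref{proori}(\ref{fyf}) to translate the sign of the numerator into the $\prec_K$ relation. Your organization is a bit cleaner --- you isolate strict positivity of the denominator first (using $\theta(x_k)<0$, which the paper proves separately as Theorem~\ref{critopti}(b)) and then read the equivalence off in one line, whereas the paper treats the two implications separately and only obtains the denominator $\ge 0$ in the displayed inequality~\eqref{dg_aux_19_01_1}.
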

\vspace{-0.5cm}
\begin{proof} Let $s_{k}$ satisfy $f^{{a}^{k}_{j}}(x_k + s_k) \prec_K f^{{a}^{k}_{j}}(x_k)$. Then,  $\Delta_{-K}(f^{{a}^{k}_{j}}(x_{k}+s_{k})-f^{{a}^{k}_{j}}(x_{k}) ) < 0 $. As $s_{k}$ is identified by solving (\ref{ghtyu}), we get 
\begin{align*}
\max_{ j \in [\omega_k]} \Delta_{-K}(m^{{a}^{k}_{j}}(s_{k})) ~&~ \le \max_{ j \in [\omega_k]} \left\{\Delta_{-K}(m^{{a}^{k}_{j}}(s_{k})),~\Delta_{-K}(\nabla f^{{a}^{k}_{j}}(x_{k})^{\top}s_k) \right\} \\ 
~&~ \le \max_{ j \in [\omega_k]} \left\{\Delta_{-K}(m^{{a}^{k}_{j}}(0)),~\Delta_{-K}(\nabla f^{{a}^{k}_{j}}(x_{k})^{\top}0) \right\} = 0, 
\end{align*}
i.e., $\Delta_{-K}(m^{{a}^{k}_{j}}(s_{k})) \le 0$. Thus,  
 \begin{align}\label{dg_aux_19_01_1}
     \Delta_{-K}(m^{{a}^{k}_{j}}(0)-m^{{a}^{k}_{j}}(s_{k})) \ge \Delta_{-K}(m^{{a}^{k}_{j}}(0))-\Delta_{-K}(m^{{a}^{k}_{j}}(s_{k})) \ge 0 
 \end{align}
and 
$$\rho^{{a}^{k}_{j}}_{k}= \frac{-\Delta_{- K}(f^{{a}^{k}_{j}}(x_{k}+s_{k})-f^{{a}^{k}_{j}}(x_{k}))}{\Delta_{-K}
({m}^{{a}^{k}_{j}}(0)-{m}^{{{a}^{k}_{j}}}(s_{k}))} > 0. $$

Conversely, suppose that $\rho^{{a}^{k}_{j}}_{k} > 0$. Then,  $\Delta_{-K}(f^{{a}^{k}_{j}}(x_{k} + s_{k}) - f^{{a}^{k}_{j}}(x_{k})) < 0$, and hence 
$f^{{a}^{k}_{j}}(x_{k}+s_{k}) \prec_K f^{{a}^{k}_{j}}(x_{k})$. 
\end{proof}

\begin{note}\label{suitred}
If we take $\rho^{{a}^{k}_{j}}_{k}$ as 
\begin{align}\label{wrong_rho}
\rho^{{a}^{k}_{j}}_{k} = \frac{\Delta_{-K}(f^{{a}^{k}_{j}}(x_{k})-f^{{a}^{k}_{j}}(x_{k}+s_{k}))}{\Delta_{-K}(m^{{a}^{k}_{j}}(0)-m^{{a}^{k}_{j}}(s_{k}))} \text{ for all } j \in [\omega_{k}],
\end{align}
instead of $\rho^{{a}^{k}_{j}}_{k} = - \sigma^{{a}^{k}_{j}}_{k}$, then $\rho^{{a}^{k}_{j}}_{k}$ cannot be a correct choice of the reduction ratio. This is due to the fact that if we consider the formula \eqref{wrong_rho}, then for a given $\eta_1 \in (0, 1)$, $\rho^{{a}^{k}_{j}}_{k} \ge \eta_1$ may not imply that $s_k$ is a $K$-descent direction for each of $f^{a_j^k}$'s, $j \in [\omega_k]$. 
To realize this, let us take the following simple instance: 
\[ m = 2, n = 1, p = 1, K = \mathbb{R}^2_{+} \text{ and } F(x) = \{f^1(x)\}, \]
where $f^1(x) = \left(f^{1,1}(x), f^{1,2}(x)\right)^\top$. Then, according to the formula \eqref{wrong_rho}, for a given $\eta_1 \in (0, 1)$, 
\begin{align}\label{aux_15_01}
 \rho^{{a}^{k}_{j}}_{k} \ge \eta_1  
\Longrightarrow \max\{f^{1,1}(x_k) - f^{1,1}(x_k + s_k), f^{1,2}(x_k) - f^{1,2}(x_k + s_k)\} > 0,  
\end{align}
which does not necessarily imply that $f^{1,1}(x_k + s_k) < f^{1,1}(x_k)$ and $f^{1,2}(x_k + s_k) < f^{1,2}(x_k)$. For example, consider $\eta_1 = \tfrac{1}{8}, \Omega_k = \tfrac{1}{2}, x_k = 0$,
\begin{align*}  f^{1,1}(x) = 2 \sin x - 8 \cos x - 10^4 x \sin x^2 \text{ and } f^{1,2}(x) = \sin x - \tfrac{32}{5} \cos x. \end{align*} 
Then, at $x_k$, $\omega_k = 1$, and the model functions corresponding to $f^{1,1}$ and $f^{1,2}$ are given by 
\[m^{1,1}(s_{k}) = 2s_{k} + 4s^{2}_{k} \text{ and } m^{1,2}(s_{k}) = s_{k} + \tfrac{16}{5} s^{2}_{k}, \text{ respectively}. \]
Hence, the solution to the problem \eqref{ghtyu} is obtained at $s_k = -\tfrac{5}{32}$. With this value of $s_k$, we see that 
$\rho_k^{a_j^k} = 0. 3612 \ge \eta_1$, but $f^{1,1}(x_k + s_k) = 29.9294 > -8 = f_{1,1}(x_k)$. Hence, the formula \eqref{wrong_rho} cannot be a correct choice of $\rho^{{a}^{k}_{j}}_{k}$.

It is noteworthy that once $\rho^{{a}^{k}_{j}}_{k}$ is taken as $- \sigma^{{a}^{k}_{j}}_{k}$, then \eqref{aux_15_01} gives that $f^{1,1}(x_k + s_k) < f^{1,1}(x_k)$ and $f^{1,2}(x_k + s_k) < f^{1,2}(x_k)$, i.e., $s_k$ is a $K$-descent direction of $f^{1}$  at $x_k$. 
\end{note}

  \begin{proposition}\label{vop_descent_implies_descent_sop}
If a trust-region step $s_k$ at $x_k$ is a $K^{\omega_k}$-descent direction of the objective function of \eqref{vop_at_x_k}, then $s_k$ is a $\preceq^{l}_{K}$-descent direction of $F$ at $x_k$. 
\end{proposition}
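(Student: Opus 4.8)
\noindent The plan is to unfold the two descent notions and connect them through a short ordering chain. By Definition~\ref{descent_setopt} in its vector-valued specialization, applied to the map $\widetilde{f}^{a^k}$ with the product cone $K^{\omega_k}$, the hypothesis that $s_k$ is a $K^{\omega_k}$-descent direction of the objective function of \eqref{vop_at_x_k} furnishes a $t_0 > 0$ with $\widetilde{f}^{a^k}(x_k + t s_k) \prec_{K^{\omega_k}} \widetilde{f}^{a^k}(x_k)$ for all $t \in (0, t_0]$. Since $\operatorname*{int}(K^{\omega_k}) = \operatorname*{int}(K) \times \cdots \times \operatorname*{int}(K)$, this is exactly
\begin{equation*}
 f^{a^k_j}(x_k + t s_k) \prec_K f^{a^k_j}(x_k) \qquad \text{for every } j \in [\omega_k] \text{ and every } t \in (0, t_0].
\end{equation*}
By the definition of $\prec^{l}_K$, what must be shown is that $\{f^i(x_k)\}_{i \in [p]} \subseteq \{f^i(x_k + t s_k)\}_{i \in [p]} + \operatorname*{int}(K)$ for every such $t$, i.e. that every $f^i(x_k)$ is $\prec_K$-dominated by some $f^{i'}(x_k + t s_k)$.

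The one substantive step is a combinatorial observation about the partition set. Each coordinate $a^k_j$ of $a^k \in P_k$ lies in $I_{v^{x_k}_j}(x_k)$, so $f^{a^k_j}(x_k) = v^{x_k}_j$, and the $v^{x_k}_j$ enumerate $\operatorname*{WMin}(F(x_k), K) \supseteq \operatorname*{Min}(F(x_k), K)$; consequently $\operatorname*{Min}(F(x_k), K) \subseteq \{f^{a^k_j}(x_k) : j \in [\omega_k]\}$. Fix $i \in [p]$ and $t \in (0, t_0]$. Because $F(x_k)$ is a finite set and $K$ is pointed, there is $z \in \operatorname*{Min}(F(x_k), K)$ with $z \preceq_K f^i(x_k)$; choose $j \in [\omega_k]$ with $z = f^{a^k_j}(x_k)$. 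Then, using the displayed descent inequality together with the elementary inclusion $\operatorname*{int}(K) + K \subseteq \operatorname*{int}(K)$ (valid since $K$ is a convex cone), one gets $f^{a^k_j}(x_k + t s_k) \prec_K f^{a^k_j}(x_k) \preceq_K f^i(x_k)$, i.e. $f^i(x_k) \in f^{a^k_j}(x_k + t s_k) + \operatorname*{int}(K) \subseteq \{f^{i'}(x_k + t s_k)\}_{i' \in [p]} + \operatorname*{int}(K)$. As $i$ was arbitrary, $\{f^i(x_k)\}_{i \in [p]} \subseteq \{f^i(x_k + t s_k)\}_{i \in [p]} + \operatorname*{int}(K)$ for all $t \in (0, t_0]$, i.e. $\{f^i(x_k + t s_k)\}_{i \in [p]} \prec^{l}_K \{f^i(x_k)\}_{i \in [p]}$ on $(0, t_0]$, which is precisely the assertion that $s_k$ is a $\preceq^{l}_K$-descent direction of $F$ at $x_k$.

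I expect the combinatorial bridge to be the main (though still mild) obstacle: one must argue carefully, from Definition~\ref{active_index} and the definition of the partition set $P_{x_k}$, that the components $f^{a^k_j}(x_k)$ of $\widetilde{f}^{a^k}$ recover all $K$-minimal values of $F(x_k)$, so that an arbitrary $f^i(x_k)$ is $\preceq_K$-dominated by one of them. The remaining ingredients — that $\operatorname*{int}$ distributes over a finite product of cones, that $\operatorname*{int}(K) + K \subseteq \operatorname*{int}(K)$, and that a single threshold $t_0$ serves all $j \in [\omega_k]$ simultaneously (it is exactly the one produced by the $K^{\omega_k}$-descent hypothesis) — are routine.
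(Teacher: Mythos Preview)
Your proof is correct and follows essentially the same route as the paper's. The paper compresses your ``combinatorial bridge'' into a citation of \cite[Proposition 2.1]{steepmethset}, which asserts precisely that $F(x_k) \subseteq \{f^{a^k_j}(x_k)\}_{j \in [\omega_k]} + K$, and then chains this with the descent inequality and the inclusion $\operatorname*{int}(K) + K \subseteq \operatorname*{int}(K)$ exactly as you do; your version simply spells out the content of that external reference.
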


\begin{proof}
Let  $s_{k}$ be a $K^{\omega_k}$-descent direction of $\widetilde{f}^{a^{k}}$ at $x_{k}$. Then, there exists $t_{0} >0$ such that for all $t \in (0, t_{0}]$,  
\begin{align}\label{supgtdjf} 
& f^{{a}^k_{j}} (x_{k}+ ts_{k})  \prec_{K} f^{{a}^{k}_{j}} (x_{k}) ~\text{for all}~ j \in [\omega_{k}] \nonumber \\
 \implies & \{ f^{{a}^{k}_{j}} (x_{k}) \}_{j \in [\omega_{k}]} + K \subseteq \{f^{{a}^{k}_{j}}(x_{k} + t s_{k}) \}_{j \in [\omega_{k}]} + \text{int}(K) + K. 
\end{align}
Hence, we obtain 
\begin{align*}
 & F(x_{k}) \subseteq \{ f^{{a}^{k}_{j}} (x_{k}) \}_{j \in [\omega_{k}]} + K ~\text{by Proposition 2.1 in \cite{steepmethset}}\\
&~~~~~~~~\subseteq \{f^{{a}^{k}_{j}} (x_{k} + t s_{k}) \}_{j \in [\omega_{k}]} + \text{int}(K) + K ~\text{by}~(\ref{supgtdjf}) \\
&~~~~~~~~\subseteq F (x_{k}+t s_{k})+ \text{int}(K), 
\end{align*} 
i.e., $F(x_{k}+t s_{k}) \prec^{l}_{K} F(x_{k})$. Thus, $s_{k}$ is a $\preceq^{l}_{K}$-descent direction of $F$ at $x_{k}$.
\end{proof}

\subsection{Trust-region radius update}\label{subsection_tr_radius_update}
 
Based on the value of $\rho^{{a}^{k}_{j}}_{k}$, we decide acceptance of the trial step $s_{k}$ and update the trust-region radius $\Omega_{k}$.
We compare the reduction ratio $\rho^{{a}^{k}_{j}}_{k}$ with two pre-specified threshold parameters  $\eta_{1},\eta_{2}\in (0, 1)$, where $\eta_{1}$ is near to $0$ and $\eta_{2}$ is close to $1$, and classify the current iterate into one of the following three cases. \\ \\ 
\emph{Case} 1.  \textbf{(Successful iteration)}. 
We call an iteration to be successful if it satisfies the following two conditions: (i) $\rho^{{a}^{k}_{j}}_{k} \ge \eta_{1}$ for all $j \in [\omega_{k}]$, and (ii) there exists $~ l \in [\omega_{k}]$ such that $\rho^{{a}^{k}_{l}}_{k} < \eta_{2}$. If $\rho^{{a
}^{k}_{j}}_{k} \ge \eta_{1}$ for all $j \in [\omega_{k}]$, then we note that
\begin{align}\label{threshold1}
   & -\sigma^{{a}^{k}_{j}}_{k} =\frac{-\Delta_{-K}(f^{{a}^{k}_{j}}(x_{k}+s_{k})-f^{{a}^{k}_{j}}(x_{k}))}{\Delta_{-K}(m^{{a}^{k}_{j}}(0)-m^{{a}^{k}_{j}}(s_{k}))} \ge \eta_{1}, \nonumber \\
 \text{i.e.,}~ &-\Delta_{-K}(f^{{a}^{k}_{j}}(x_{k}+s_{k})-f^{{a}^{k}_{j}}(x_{k})) \ge  \eta_{1} \Delta_{-K}(-m^{{a}^{k}_{j}}(s_{k})).
 \end{align}
Since $\eta_{1} > 0$, from (\ref{threshold1}), we obtain for all $j \in [\omega_{k}]$ that $f^{{a}^{k}_{j}}(x_{k}+s_{k}) \preceq_{K} f^{{a}^{k}_{j}}(x_{k}).$
We thus have for all $k \in \mathbb N \cup \{0\}$ that 
\allowdisplaybreaks
\begin{align}
F(x_{k}) & \subseteq \{ f^{a^{k}_{1}}(x_{k}), f^{a^{k}_{2}}(x_{k}), \ldots, f^{a^{k}_{w_{k}}}(x_{k}) \} + K \nonumber \text{ by \cite[Proposition 2.1]{steepmethset}} \nonumber\\
&\subseteq \{f^{a^{k}_{1}}(x_{k}+ s_{k}), f^{a^{k}_{2}}(x_{k}+s_{k}), \ldots, f^{a^{k}_{w_{k}}}(x_{k}+s_{k})\}+ \text{int}(K) + K  \nonumber\\
 & \subseteq F(x_{k}+s_{k}) + \text{int}(K) . \label{ghdeotye}
 \end{align}
Therefore, $\{F(x_{k})\}$ is a monotonically nonincreasing sequence with respect to $\preceq^l_K$. Moreover, by Lemma \ref{proori} (\ref{tewyueui}), we have 
\begin{align}\label{dg_aux_18_01_3}
  &\Delta_{-K}(f^{{a}^{k}_{j}}(x_{k}))-\Delta_{-K}(f^{{a}^{k}_{j}}(x_{k}+s_{k}) ) \ge \eta_{1}(\Delta_{-K}(m^{{a}^{k}_{j}}(0))-\Delta_{-K}(m^{{a}^{k}_{j}}(s_{k}))). 
\end{align}
Thus, the condition $\rho^{{a}^{k}_{j}}_{k} \ge \eta_{1}$ may result in only a small reduction in $f^{a^{k}_{j}}$, which might not be entirely satisfactory.
This leads to the necessary condition of step $s_{k}$ to be not entirely satisfactory that is there exists $~ l \in [\omega_{k}]$ such that 
\begin{align}\label{ihvk_fgc}
   \Delta_{-K}(f^{{a}^{k}_{l}}(x_{k}))-\Delta_{-K}(f^{{a}^{k}_{l}}(x_{k}+s_{k})) < \eta_{2} (\Delta_{-K}(m^{{a}^{k}_{l}}(0))-\Delta_{-K}(m^{{a}^{k}_{l}}(s_{k}))),
\end{align}
where $\eta_{2} > \eta_{1}$ and $\eta_{2}$ is close to 1. As per (\ref{ihvk_fgc}), there exists $~ l \in [\omega_{k}]$ such that decrement of $f^{{a}^{k}_{l}}$ is smaller than $\eta_{2}$ times the decrement of its model $m^{a^{k}_{l}}$. Then, applying Lemma \ref{proori} (\ref{tewyueui}) in the inequality (\ref{ihvk_fgc}), we get
\begin{align*}
   - \Delta_{-K}(f^{{a}^{k}_{l}}(x_{k}+s_{k})-f^{{a}^{k}_{l}}(x_{k})) < \eta_{2} \Delta_{-K}(m^{{a}^{k}_{l}}(0)-m^{{a}^{k}_{l}}(s_{k})).
\end{align*}
This implies that the existence of an 
$l \in [\omega_{k}]$ such that $\rho^{{a}^{k}_{l}}_{k} < \eta_{2}.$
Combining the two conditions (\ref{dg_aux_18_01_3}) and (\ref{ihvk_fgc}), we can say that although the decrements of all $f^{{a}^{k}_{j}}$ are greater than $\eta_{1}$ times the decrements of $m^{{a}^{k}_{j}}$, not all decrements are bigger than $\eta_{2}$ times $m^{{a}^{k}_{j}}$. Hence, we call this step $s_{k}$ to be ``successful'' (but not ``very successful'') and 
update $x_{k}$ to $x_{k}+s_{k}$. Similar to the conventional trust-region radius update rule \cite{Trrionglion}, $\Omega_{k}$ is updated to $\Omega_{k+1}\in (\gamma_{2}\Omega_{k},  \Omega_{k}]$, where $\gamma_{2}$ is close to $1$. 

Next, we consider the case where all the objective function components get a satisfactory reduction. 

\noindent
\emph{Case} 2. \textbf{(Very successful iteration)}. 
Let the iterate $x_{k}$ satisfy  $\rho^{{a}^{k}_{j}}_{k} \ge \eta_{2}$ for all $j \in [\omega_{k}]$. In this case, we have
\begin{align*}
   &-\sigma^{{a}^{k}_{j}}_{k} = \frac{-\Delta_{-K}(f^{{a}^{k}_{j}}(x_{k}+s_{k})-f^{{a}^{k}_{j}}(x_{k}))}{\Delta_{-K}(m^{{a}^{k}_{j}}(0)-m^{{a}^{k}_{j}}(s_{k}))} > \eta_{2} \\
 \text{i.e.,}~ & \Delta_{-K}(f^{{a}^{k}_{l}}(x_{k}))-\Delta_{-K}(f^{{a}^{k}_{l}}(x_{k}+s_{k}))> \eta_{2} ({\Delta_{-K}(m^{{a}^{k}_{l}}(0))-\Delta_{-K}(m^{{a}^{k}_{l}}(s_{k}))}). 
\end{align*}
Since $\eta_{2}$ is close to $1$, every model function $m^{{a}^{k}_{j}}$ acts as a good local approximation of its associated objective function $f^{{a}^{k}_{j}}$. Therefore, this trust-region step $s_{k}$ is considered ``very successful'' for updating $x_{k}$ to $x_{k}+s_{k}$. Following the conventional update rule \cite{Trrionglion}, trust-region radius update rule $\Omega_{k}$ is enlarged to $\Omega_{k+1} \in (\Omega_{k}, \infty)$.  \\

\noindent
\emph{Case} 3. \textbf{(Unsuccessful iteration)}. 
We call an iteration to be unsuccessful if there exists an $~ l \in [\omega_{k}]$ for which the function value does not decrease, i.e.,  
\begin{align}\label{uniop_oyt}
&f^{a^{k}_{l}}(x_{k}) \prec_{K} f^{a^{k}_{l}}(x_{k}+s_{k}) \nonumber\\ 
\implies &  \Delta_{-K}(f^{a^{k}_{l}}(x_{k})) <  \Delta_{-K}(f^{a^{k}_{l}}(x_{k}+s_{k})) \text{ by Lemma \ref{proori}~(\ref{htytuttd})} \\
\implies & 0 < \Delta_{-K}(f^{a^{k}_{l}}(x_{k}+s_{k})-f^{a^{k}_{l}}(x_{k})) \text{ by Lemma  \ref{proori}~(\ref{tewyueui})}.
\end{align}
This implies that there exists an $~ l \in [\omega_{k}]$ such that $\rho^{a^{k}_{l}}_{k} < 0 \le \eta_{1}$. 
Thus, the necessary condition for the iteration $k$ to be unsuccessful and $s_{k}$ to be rejected is that there exists an $~ l \in [\omega_{k}]$ such that $\rho^{a^{k}_{l}}_{k}\le \eta_{1}$. In this case, the trust-region radius $\Omega_{k}$, similar to \cite{Trrionglion}, is contracted to $\Omega_{k+1} \in [\gamma_{1}\Omega_{k}, \gamma_{2}\Omega_{k}]$, where $0 < \gamma_{1} \le \gamma_{2} <1$.
 
 \subsection{Stopping condition}\label{subsection_stopping_condition}
Let $\theta(x_{k})$ and $s(x_{k})$ be the optimal value and optimal point, respectively, of the subproblem (\ref{ghtyu}), i.e., 
\begin{equation}\label{sdgftdu}
    \resizebox{.87\hsize}{!}{$\theta(x_k) := \min\limits_{ s \in  \mathcal B_{k}}\max\limits_{j \in [ \omega_{k}]}\left\{ \Delta_{-{K}}\left(\nabla f^{{a}^{k}_{j}}(x_{k})^{\top}s+ \tfrac{1}{2}s^{\top}\nabla^{2}f^{{a}^{k}_{j}}(x_{k})s\right) ,\Delta_{-{K}} \left( \nabla f^{{a}^{k}_{j}}(x_{k})^{\top}s\right)\right\}$}, 
   \end{equation} 
   and 
   \begin{align}\label{hdhvsd} 
   \resizebox{.9\hsize}{!}{$s(x_k) := \underset{s\in \mathcal B_{k}}{\operatorname{argmin}}\max\limits_{j \in [\omega_{k}]}\bigg\{\Delta_{-{K}}\bigg(\nabla f^{{a}^{k}_{j}}(x_{k})^{\top}s+ \tfrac{1}{2}s^{\top}\nabla^{2}f^{{a}^{k}_{j}}(x_{k})s\bigg), \Delta_{-{K}} \bigg( \nabla f^{{a}^{k}_{j}}(x_{k})^{\top}s\bigg)\bigg\}.$}
\end{align} 

\begin{theorem}\label{critopti}For
the functions $\theta$ and $s$ in \eqref{rbdwbyew} and \eqref{hdhvsd}, respectively, the following results hold. 
\begin{enumerate}[(a)]
\item The mapping $ \theta$ is continuous at any $x \in \mathcal{R}$, where $\mathcal{R}$ is the set of all regular point for \eqref{fgcx}, and well-defined for all $x \in \mathbb R^{n}$.
\item\label{sucfr} 
The following three statements satisfy $(i) \Longleftrightarrow(ii) \Longrightarrow (iii)$: \begin{enumerate}[(i)]
    \item $x_{k}$ is not a $K$-critical point for \eqref{fgcx};
    \item\label{gpourer} $\theta(x_{k})< 0$;
    \item $ s(x_{k}) \neq {0}$.
\end{enumerate}
\end{enumerate}
 \end{theorem}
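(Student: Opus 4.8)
The plan is to prove part (a) and part (b) separately, handling the logical equivalences and implication in (b) by a short chain of arguments that exploit the structure of the subproblem \eqref{ghtyu} and the properties of $\Delta_{-K}$ collected in Lemma \ref{proori}.

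\emph{Part (a).} For well-definedness, I would observe that for each fixed $x$, the partition set $P_x$ is finite and $\mathcal{B}$ (resp.\ $\mathcal{B}_k$) is compact, while $s \mapsto \Theta_x(a,s)$ is continuous (it is a finite max of compositions of the Lipschitz function $\Delta_{-K}$ with polynomials in $s$); hence the minimum in \eqref{rbdwbyew} is attained and $\theta(x)$ is a well-defined real number. For continuity at a regular point $x \in \mathcal{R}$, I would use Lemma \ref{regularity_condition}: there is a neighborhood $\mathcal{N}$ of $x$ on which $\omega$ is constant and $P_y \subseteq P_x$ for all $y \in \mathcal{N}$. On such a neighborhood one can write $\theta(y) = \min_{a \in P_x} \min_{s \in \mathcal{B}} \Theta_y(a,s)$ with the understanding that only the indices appearing in $P_y$ are ``active'' — more carefully, since $P_y \subseteq P_x$ and $\omega(y)=\omega(x)$, and the data $\nabla f^{i}, \nabla^2 f^{i}$ depend continuously on the base point, the function $(y,a,s) \mapsto \Theta_y(a,s)$ is jointly continuous on $\mathcal{N} \times P_x \times \mathcal{B}$. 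A standard parametric-minimization (Berge maximum theorem) argument over the compact set $P_x \times \mathcal{B}$ then yields continuity of $y \mapsto \min_{(a,s)} \Theta_y(a,s)$ at $x$; the only subtlety, which I would address explicitly, is that the feasible partition $P_y$ may be a \emph{strict} subset of $P_x$, so I would argue that restricting the outer minimum to $P_y$ rather than $P_x$ does not destroy continuity at $x$ because at $x$ itself $P_x$ is the full set and lower/upper semicontinuity of the restricted minimum both pinch to $\theta(x)$.

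\emph{Part (b).} For $(ii)\Rightarrow(i)$: if $\theta(x_k)<0$, then for the minimizing pair $(a^k,s_k)$ we have $\Theta_{x_k}(a^k,s_k)<0$, so by the compact form \eqref{dg_aux_18_01_1} both $\Delta_{-K}(\nabla f^{a^k_j}(x_k)^\top s_k + \tfrac12 s_k^\top \nabla^2 f^{a^k_j}(x_k)s_k) < 0$ and $\Delta_{-K}(\nabla f^{a^k_j}(x_k)^\top s_k) < 0$ for every $j \in [\omega_k]$; by Lemma \ref{proori}\,(\ref{fyf}) the latter says $\nabla f^{a^k_j}(x_k)^\top s_k \in \operatorname{int}(-K)$, i.e.\ $\nabla f^{a^k_j}(x_k)^\top s_k \prec_K 0$ for all $j$, which by Definition \ref{spcritic} exhibits $a^k \in P_k$ and $s_k$ witnessing that $x_k$ is not $K$-critical. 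For $(i)\Rightarrow(ii)$: if $x_k$ is not $K$-critical, there exist $\bar a \in P_k$ and $\bar s \in \mathbb{R}^n$ with $\nabla f^{\bar a_j}(x_k)^\top \bar s \prec_K 0$ for all $j$; rescaling $\bar s$ by a small $t>0$ keeps it in $\mathcal{B}_k$ and, using Lemma \ref{proori}\,(\ref{ubrsyu}) together with the fact that the quadratic term is $O(t^2)$ while the linear term is $O(t)$ and strictly negative under $\Delta_{-K}$, one gets $\Theta_{x_k}(\bar a, t\bar s) < 0$ for $t$ small, whence $\theta(x_k) \le \Theta_{x_k}(\bar a, t\bar s) < 0$. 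For $(ii)\Rightarrow(iii)$: since $\Theta_{x_k}(a,0) = \max_j \Delta_{-K}(0) = 0$ for every $a$ (as $0 \in \operatorname{bd}(-K)$, Lemma \ref{proori}\,(\ref{hryx})), a minimizer $s(x_k)$ attaining the strictly negative value $\theta(x_k)$ cannot be $0$.

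\emph{Main obstacle.} The routine parts are the $\Delta_{-K}$ manipulations in (b), which are immediate from Lemma \ref{proori}. The genuinely delicate point is the continuity claim in (a): one must be careful that the outer minimization is over the \emph{variable} set $P_y$, not a fixed set, and that $P_y$ can shrink relative to $P_x$ as $y\to x$. I expect to spend the bulk of the argument justifying that this set-valued constraint is continuous (indeed, closed-graph and suitably inner-semicontinuous) at regular points via Lemma \ref{regularity_condition}, so that Berge's theorem applies; the rescaling argument in $(i)\Rightarrow(ii)$ — controlling the quadratic perturbation against a strictly negative linear term uniformly in $j$ — is a secondary technical point but is handled by choosing $t$ smaller than a quantity depending on the finitely many Hessian norms and the ``amount of interiority'' $\min_j \operatorname{dist}(\nabla f^{\bar a_j}(x_k)^\top \bar s, \partial(-K))$.
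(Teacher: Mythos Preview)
Your proof of part (b) is correct and matches the paper's: the paper proves $(i)\Rightarrow(ii)$ by the same rescaling idea (it splits into two cases according to the sign of $\Delta_{-K}(s_k^\top \nabla^2 f^{a^k_j}(x_k) s_k)$, but the content is your small-$t$ argument), and handles $(ii)\Rightarrow(i)$ and $(ii)\Rightarrow(iii)$ exactly as you do.

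For part (a) the approaches differ only in packaging. The paper does not invoke Berge's theorem but writes out an explicit $\varepsilon$--$\delta$ argument: using $\lVert s(x)\rVert\le\Omega_{\max}$ and uniform continuity of $\nabla f^{a_j},\nabla^2 f^{a_j}$ on a compact neighbourhood, it shows the family $z\mapsto \Theta_z(a,s(x))$ is equicontinuous in $x$, then sandwiches $\theta(z)$ between $\theta(\bar x)\pm\varepsilon$ by plugging in the optimizer $s(\bar x)$ on one side and $s(z)$ on the other. Your Berge-type argument is a cleaner abstraction of the same estimate; what you buy is brevity, what the paper buys is that no black box is invoked. The delicate point you flag as the ``main obstacle''---that $P_y$ can be a \emph{strict} subset of $P_{\bar x}$, so the optimal $a^{\bar x}$ need not lie in $P_z$---is genuine and is exactly where regularity (Lemma~\ref{regularity_condition}) must do work; the paper's inequality $\theta(z)\le \Phi_{\bar x}(z)$ tacitly uses $a^{\bar x}\in P_z$ in the same way, so your treatment is at least as explicit as the paper's on this point.
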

\begin{proof}
\begin{enumerate}[(a)]
\item 
Let $\bar x \in \mathcal{R}$ and $\varepsilon > 0$. Since $\bar x$ is a regular point of $F$, there exists a neighbourhood $U$ of $\bar x$ such that for all $z \in U$, we have $\omega(z) = \bar{\omega} \text{ and }  P_{z} \subseteq P_{\bar x}.$ 
For any given $x \in \mathbb{R}^{n}$, $\bar a = (\bar{a}_{1}, \bar{a}_{2}, \ldots, \bar{a}_{\bar{\omega}}) \in P_{z}$ and $j \in [\bar{\omega}]$, we define two functions $\phi_{x,\bar{a}_{j}}: \mathbb R^{n} \to \mathbb R$ and $ \psi_{x,\bar{a}_{j}}: \mathbb R^{n} \to \mathbb R$ by   
\begin{align*}
\phi_{x,\bar{a}_{j}}(z) & := \Delta_{-{K}}\left(\nabla f^{\bar{a}_{j}}(z)^{\top}s(x)+ \tfrac{1}{2}s(x)^{\top}\nabla^{2}f^{\bar{a}_{j}}(z)s(x)\right), \\
\text{ and } \psi_{x, \bar{a}_{j}}(z) & := \Delta_{-{K}} \left( \nabla f^{\bar{a}_{j}}(z)^{\top}s(x)\right), \text{ respectively}. 
\end{align*}
Then, we find that $ \theta(z) = \min\limits_{(\bar{a},s) \in P_{z} \times {\mathcal B}} ~ \max\limits_{j \in [\bar{\omega}]} \left\{\phi_{x, \bar{a}_{j}}(z),  \psi_{x, \bar{a}_{j}}(z)\right\}$.  
Let $\mathcal{W} \subseteq \mathbb{R}^n$ be a compact set containing $\bar x$. We note by Lemma \ref{proori} \eqref{aeruin} for any $z \in U \cap \mathcal{W}$ that 
\begin{align}\label{trrjfg}
    ~&\max_{j\in [\bar{\omega}]} ~ \lvert \phi_{{x}, \bar{a}_{j}}(z)- \phi_{{x}, \bar{a}_{j}}(\bar{x}) \rvert \nonumber \\
  \le ~& \max_{j \in [\bar{\omega}]} \resizebox{.86\hsize}{!}{$\left\| 
 \left(\nabla f^{\bar{a}_{j}}(z)^{\top}s( {x})+ \tfrac{1}{2}s({x})^{\top}\nabla^{2}f^{\bar{a}_{j}}(z)s({x})\right)- \left(\nabla f^{\bar{a}_{j}}(\bar{x})^{\top}s({x})+ \tfrac{1}{2}s({x})^{\top}  \nabla^{2}f^{\bar{a}_{j}}(\bar{x})s({x})\right)\right\|$} \nonumber \\
 \le ~&  \lVert s({x}) \rVert  \max_{j \in [\bar{\omega}]} \lVert \nabla f^{\bar{a}_{j}}(z)-\nabla f^{\bar{a}_{j}}(\bar{x}))\rVert + \tfrac{1}{2} \lVert s({x}) \rVert^{2} \max_{j \in [\bar{\omega}]} \lVert \nabla^{2} f^{\bar{a}_{j}}(z)-\nabla^{2} f^{\bar{a}_{j}}(\bar{x}))\rVert. 
\end{align}
Since $ f^{\bar{a}_{j}}$ is twice continuously differentiable, the functions $\nabla f^{\bar{a}_{j}}$ and $\nabla^{2} f^{\bar{a}_{j}}$ are uniformly continuous in  $\mathcal{W}$. Hence, for the given $\varepsilon$, there is $\delta_{1} >0 $ such that  $\lVert z-\bar{x}\rVert < \delta_{1}$ and $z \in U \cap \mathcal{W}$ imply $\lVert \nabla f^{\bar{a}_{j}}(z)-\nabla f^{\bar{a}_{j}}(\bar{x}))\rVert\le \frac{\varepsilon}{2{\Omega}_{\max}}$. Similarly, there is $\delta_{2} >0 $ such that  $\lVert z-\bar{x}\rVert < \delta_{2}$ and $ z \in U \cap\mathcal{W}$ imply $\lVert \nabla^{2} f^{\bar{a}_{j}}(z)-\nabla^{2} f^{\bar{a}_{j}}(\bar{x}))\rVert\le \frac{\varepsilon}{{\Omega}^2_{\max}}$. 
Let $\delta := \min\{ \delta_{1}, \delta_{2}\} >0 $.
As $ \lVert s({x})\rVert \le {\Omega_{\max}}$, from (\ref{trrjfg}), we have 
\begin{align*}
\lVert z-\bar{x}\rVert < \delta \text{  and }
z \in U \cap\mathcal{W} ~\Longrightarrow~ \max_{j \in [\bar{\omega}]}\lvert \phi_{{x}, \bar{a}_{j}}(z)- \phi_{{x}, \bar{a}_{j}}(\bar{x}) \rvert \le \varepsilon.
\end{align*} 
This means that $\{\phi_{{x}, \bar{a}_{j}}: j \in  [{\bar{\omega}}]\}$ is equicontinuous at $\bar x$. Similarly, $\{\psi_{{x}, \bar{a}_{j}}: j \in  [{\bar{\omega}}]\}$ is equicontinuous at $\bar x$. Therefore, the family $\{\Phi_x\}_{x \in \mathbb{R}^n}$, where \begin{align*}
\Phi_{{x}}(z) := \max\limits_{j \in [ \bar{\omega}]} \{ \phi_{{x},\bar{a}_{j}}(z), \psi_{{x}, \bar{a}_{j}}(z)\}, ~z \in U \cap \mathcal{W}, 
\end{align*}
is equicontinuous. Then, for the given $ \varepsilon >0$, there exists $\bar \delta > 0$ such that for all $ z \in U \cap \mathcal W$ satisfying $\lVert z-\bar x \rVert < \delta$, we have $
    \lvert \Phi_{{x}}(z)-\Phi_{{x}}(\bar x)\rvert < \varepsilon   \text{ for all } z \in \mathbb{R}^{n}.$ 
Hence, for all $z \in U \cap \mathcal W$ satisfying $ \lVert z-\bar{x} \rVert < \bar \delta$, it follows that 
\begin{align*}
\theta(z) \le   ~& \Phi_{\bar x}(z) \le \Phi_{\bar x}(\bar x)+ \lvert \Phi_{\bar x}(z)-\Phi_{\bar x}(\bar{x})\rvert < \theta(\bar{x})+ \varepsilon, 
\end{align*}
i.e., $\theta(z)-\theta(\bar x) < \varepsilon$. By altering the role of $z$ and $\bar{x}$, we find that $ \lvert \theta(\bar{x})-\theta(z) \rvert < \varepsilon$. Thus, the continuity of $\theta$ at $\bar x$ follows. Hence, $\theta$ is continuous in $\mathcal{R}$.

As for any $x \in \mathbb{R}^n$, the value of $\theta(x)$ is obtained by minimization of a maximum function of continuous functions over the compact set $P_{x} \times \mathcal{B}$, the function $\theta$ is well-defined.

\item \fbox{(i) implies (ii).}  
Since $x_{k}$ is not $K$-critical for (\ref{fgcx}), there exists $a^{k} \in P_{k}$ and $ s_{k} \in \mathcal{B}_k$ such that \begin{align}\label{vdfsfgs}
      & \Delta_{-K}(\nabla f^{{a}^{k}_{j}}(x_{k})^{\top}{s_k})<0  \text{ for all } j \in [\omega_{k}]. 
     \end{align}

\noindent
\emph{Case}1. Let  for all $j \in [\omega_{k}]$, $ \Delta_{-K}({{s_k}}^{\top}\nabla^{2}f^{{a}^{k}_{j}}(x_{k}) {s_k}) \le 0$. Then, Lemma \ref{proori} (\ref{tewyueui}), implies that 
\begin{equation} \label{tydss} \Delta_{-K}\left(\nabla f^{{a}^{k}_{j}}( x_{k})^{\top}{s_{k}}+ \tfrac{1}{2}{{s_{k}}}^{\top}\nabla^{2}f^{{a}^{k}_{j}}(x_{k}) {s_{k}} \right) < 0 \text{ for all } j \in [\omega_k]. 
\end{equation}
Combining \eqref{vdfsfgs} and \eqref{tydss}, we get $\theta(x_k) < 0$. \\

\noindent
\emph{Case }2. Let $\Delta_{-K}( {{s_{k}}}^{\top}\nabla^{2}f^{{a}^{k}_{l}}(x_{k}) {s_{k}} ) >0$ for some $l \in [\omega_{k}]$. Then, by considering $s'_{k} := \alpha s_{k}$, with $\alpha >0$ small enough, we 
have 
\begin{equation}\label{erfte}
  \Delta_{-K}\left(\nabla f^{{a}^{k}_{j}}(x_{k})^{\top}{s'_{k}}\right)+ \tfrac{1}{2} \Delta_{- K}\left({{s'_{k}}}^{\top}\nabla^{2}f^{{a}^{k}_{j}}(x_{k}) {s'_{k}}\right) < 0 \text{ for all } j \in [\omega_{k}]. 
\end{equation}
For instance, any $\alpha$ that satisfies  
$0 < \alpha < \frac{- 2\Delta_{-K}(\nabla f^{{a}^{k}_{l}}(x_k)^{\top}s_k)}{\Delta_{-K}(s_{k}^{\top}\nabla^{2} f^{{a}^{k}_{l}}(x_{k})s_{k})}  \text{ for all } l \in [\omega_k]$ with $\Delta_{-K}( {{s_{k}}}^{\top}\nabla^{2}f^{{a}^{k}_{l}}(x_{k}) {s_{k}} ) >0$ holds  \eqref{erfte}.
As $s_k$ lies in $\mathcal{B}_k$, a small enough choice of $\alpha$ leads to $s'_k \in \mathcal{B}_k$. 
With such an $s'_k \in \mathcal{B}_k$, we note from \eqref{vdfsfgs} that 
\begin{equation}\label{aux_deb_g1}
\Delta_{-K}(\nabla f^{{a}^{k}_{j}}(x_{k})^{\top}{s'_k}) = \alpha \Delta_{-K}(\nabla f^{{a}^{k}_{j}}(x_{k})^{\top}{s_k})<0  \text{ for all } j \in [\omega_{k}].
\end{equation} 
From \eqref{erfte} and Lemma \ref{proori} (\ref{tewyueui}), 
\begin{align}\label{aux_deb_g2}
\Delta_{-K}\left(\nabla f^{{a}^{k}_{j}}( x_{k})^{\top}{s'_{k}}+ \tfrac{1}{2}{{s'_{k}}}^{\top}\nabla^{2}f^{{a}^{k}_{j}}(x_{k}) {s'_{k}} \right) < 0 \text{ for all } j \in [\omega_k].    
\end{align}
In view of (\ref{aux_deb_g1}) and (\ref{aux_deb_g2}), we get $\theta(x_{k})<0$.

\fbox{(ii) implies (i).} If $\theta(x_k)<0$, then there exists a feasible point $(t_k, s_k)$, with $t_k < 0$, of \eqref{ghtyu}  such that 
       $\Delta_{- K}(\nabla f^{{a}^{k}_{j}}(x_k)^{\top} s_{k})\le t_k <0 ~\text{for all}~ j \in [\omega_{k}].$
   So, $x_k$ is not $K$-critical.

\fbox{(ii) implies (iii).} If $s(x_k) = {0}$, then from the definition of $s(x_k)$ and \eqref{sdgftdu}, we get $\theta(x_{k}) =0$. This contradicts $\theta(x_{k}) <0$. So, $ s(x_{k}) \neq 0$.  
\end{enumerate}

\end{proof}

\begin{algorithm}[H]
\caption{Trust-region algorithm for solving (\ref{fgcx}) }\label{avds1}
\begin{small}
\begin{enumerate}[1:]
\item \emph{Input and initialization} \\
Provide the functions $f^i: \mathbb{R}^n \to \mathbb{R}^m$, $i = 1, 2, \ldots, p$, of the problem (\ref{fgcx}). \\ 
Choose an initial point $x_{0}$, an initial trust-region radius $\Omega_{0}$, a maximum allowed trust-region radius $\Omega_{\max}$, a tolerance value $\varepsilon$ for stopping criterion, threshold parameters $ \eta_{1}, \eta_{2} \in (0,1)$ with $\eta_1 < \eta_2$, and fractions $\gamma_{1}, \gamma_{2}\in (0,1)$ to shrink the trust-region radius. \\ 
Set the iteration counter $k := 0$.

\item \label{gtesv} \emph{Computation of minimal elements}  \\ 
Compute $M_{k} := \text{Min}({F}(x_{k}), K) = \{r_1,r_2,\ldots,r_{\omega_k}\}$. \\
Compute the partition set $P_{k} := I_{r_1}\times I_{r_2}\times\cdots\times  I_{r_{\omega_k}}$. \\ 
Find $p_k := \lvert P_{k} \rvert$ and $\omega_{k} := \lvert \text{Min}({F}(x_{k}), K)\rvert$.

\item \label{step3_choice_of_a} \emph{Choice of an `${a}^{k}$' from $P_{k}$} \\ 
Select an element ${a}^{k} := ({a}^{k}_{1},{a}^{k}_{2}, \ldots, {a}^{k}_{\omega_{k}}) \in P_{k}$ such that 
$({a}^{k},  s_{k}) \in \underset{(a, s)\in P_{k}\times \mathcal B_{k}}{\operatorname{argmin}} \Theta_{x_k}(a, s).$

\item \emph{Model definition}\\
Compute the model functions $m^{{a}^{k}_{j}}$ for all $j \in [{\omega}_{k}]$ by the formula 
\begin{align*}
    m^{{a}^{k}_{j}}(s)=\nabla f^{{a}^{k}_{j}}(x_{k})^{\top}s+\tfrac{1}{2}s^{\top} \nabla^{2}f^{{a}^{k}_{j}}(x_{k})s, ~ \lVert s \rVert \le \Omega_{k}. 
\end{align*}

\item \label{step size calculation}\emph{Step size calculation} \\ 
Find $(t_{k}, s_{k})$ by solving the subproblem (\ref{ghtyu}).

\item \emph{Termination criterion} \label{tocri} \\ 
If $\lvert t_{k} \rvert < \varepsilon$, then stop and provide $x_{k}$ as an approximate $K$-critical point of (\ref{fgcx}).\\ 
Else, proceed to the next step. 

\item \emph{Calculation of reduction ratio}\label{cal_reduct} \\ 
Execute \begin{align*}
        \sigma^{{a}^{k}_{j}}_{k} := \frac{\Delta_{- K}(f^{{a}^{k}_{j}}(x_{k}+s_{k})-f^{{a}^{k}_{j}}(x_{k}))}{\Delta_{-K}({m}^{{a}^{k}_{j}}(0)-{m}^{{a}^{k}_{j}}(s_{k}))} \text{ and }~\rho^{{a}^{k}_{j}}_{k} := - \sigma^{{a}^{k}_{j}}_{k}~ \text{for all} ~j \in [\omega_{k}].~  
\end{align*}

\item \emph{Acceptance of the computed step size} \label{getwqs} \\ 
If $\rho^{{a}^{k}_{j}}_{k}\ge \eta_{1}$ for all $j \in [{\omega}_{k}]$, then update   $x_{k+1} := x_{k}+ s_{k}.$

\item \emph{Rejection of the computed step size}\label{rej_ste} \\    
If there exists $j \in [\omega_{k}]$ such that $\rho^{{a}^{k}_{j}}_{k} < \eta_{1}$, then $x_{k+1} := x_{k}$.

\item \emph{Trust-region radius update}\label{trust_region_radius_update} \\ 
Choose $\Omega_{k+1}$ by the rule 
\begin{align*}
\begin{cases}
(\gamma_{2} \Omega_{k}, \Omega_{k}] & \text{if}~ \rho^{{a}^{k}_{j}}_{k} \ge \eta_1 ~\forall~ j \in [{\omega}_{k}]~ \text{and}~ \exists~ l\in [\omega_{k}] ~\text{such that}~\rho^{{a}^{k}_{l}}_{k} < \eta_{2} ~  \text{(Successful)} \\
(\Omega_{k}, \infty) & \text{if}~  \rho^{{a}^{k}_{j}}_{k} \ge \eta_2 ~\forall ~j\in [{\omega}_{k}]~ (\text{Very successful})\\
[\gamma_{1}\Omega_{k}, \gamma_{2}\Omega_{k}] &\text{if}~ \exists~ l \in [\omega_{k}]~\text{such that}~ \rho^{{a}^{k}_{l}}_{k} <  \eta_{1} ~ (\text{Unsuccessful}). 
\end{cases}
\end{align*}
\item \emph{Move to next iteration}\\  
$ k := k+1$ and go to Step \ref{gtesv}.
\end{enumerate}
\end{small}
\end{algorithm}

\subsection{Well-definedness of Algorithm \ref{avds1}}\label{welldefine_trust_algori} The 
well-definedness of Algorithm \ref{avds1} depends upon Step  \ref{step3_choice_of_a}, Step \ref{step size calculation} and Step \ref{cal_reduct}. Below, we provide reasons behind the well-definedness of these three steps.

In Step \ref{step3_choice_of_a}, we choose a specific $(a^{k}, s_k)$ from $P_{k} \times \mathcal{B}_k$ as defined in (\ref{dg_aux_17_01_2}). The existence of such an $(a^k, s_k)$ is assured by the following reason: 
\begin{enumerate}[(i)]
\item 
 for each $a^k \in P_k$, the function $\Theta_{x_k}(a^k, \cdot)$ as defined in \eqref{namfun} is a continuous function, 

\item the set $\mathcal{B}_k$ is a compact set, and 

\item 
$P_k$ is a finite set. 
\end{enumerate}

In Step \ref{step size calculation}, we solve the subproblem  (\ref{ghtyu}) to find a solution $(t_{k}, s_{k})$. For this step to be well-defined, it is required that a solution exists for the subproblem (\ref{ghtyu}). From Lemma \ref{proori}~(\ref{aeruin}), we recall that $\Delta_{-K}$ is a continuous function. Thus,  the subproblem (\ref{ghtyu}) is a minimization of a continuous function over a compact subset of $\{s\in \mathbb R^{n}: \lVert s \rVert \le \Omega_{k}\}$. Hence, a solution $(t_{k}, s_{k})$ to (\ref{ghtyu}) is assured in Step \ref{step size calculation}.

In Step \ref{cal_reduct}, we calculate the reduction ratio $\rho^{{a}^{k}_{j}}_{k}$. When we enter Step \ref{cal_reduct}, then note that $x_k$ is not a $K$-critical point of \eqref{fgcx}. Indeed, because if $x_k$ is a $K$-critical point, then by Proposition \ref{nrwelyd}, $t_k = \theta(x_k) = 0$, and the algorithm will then get terminated at Step \ref{tocri}. At a not $K$-critical point $x_k$,  $\Delta_{-K}(m^{{a}^{k}_{j}}(0)-m^{{a}^{k}_{j}}(s_{k}))$ is positive. Then, the denominator in  $\rho^{{a}^{k}_{j}}_{k}$ is nonzero and hence Step  \ref{cal_reduct} is well-defined. 

\section{Global convergence analysis}\label{global_conv_ana}
In this section, we discuss the global convergence of Algorithm \ref{avds1} under the following assumptions.

\begin{assumption} \label{Assumption_1}The level set  ${\mathcal{L}_{0}}: = \{ x \in \mathbb R^{n}: F(x) \preceq^{l}_{K} F(x_{0})\}$ is bounded. 
\end{assumption}

\begin{assumption}\label{fun_bndbelow}

The function $f^{i}$ is bounded below for each $i \in [p]$. 
\end{assumption}

\begin{assumption}
There exists a $\mathcal{K}_{1}>0$ such that for each $ i \in [p]$, 
\[
\lVert \nabla f^{{i}}(x) \rVert \le \mathcal{K}_{1} \text{ for all } x \in \mathbb{R}^n. \]
\label{grad_bnd}
\end{assumption}

\vspace{-0.75cm}
\begin{assumption}\label{hess_bnd} 
There exists a $\mathcal{K}_{2} > 0$ such that for each $i \in [p]$, 
\[\lVert \nabla^{2} f^{i}(x) \rVert \le \mathcal{K}_{2} ~\text{for all} ~ x \in \mathbb{R}^n.\]
\end{assumption}

\begin{theorem}\label{fsgcsb}
Let $x_{k}$ be not a $K$-critical point of \eqref{fgcx}, and $v \in \mathbb R^{n}$ be a $K$-descent direction of $f^{a^{k}_j}$ at $x_{k}$ for all $j \in [\omega_k]$. Then, for each $j \in [\omega_{k}]$, there exists $\bar{t}_j >0$ satisfying $\lVert {\bar{t}_j} v \rVert \le \Omega_{k}$ such that 
\begin{align*}
\Delta_{-K}(m^{{a}^{k}_{j}}\left({\bar{t}_j}v\right)) \le  \Delta_{-K}(m^{{a}^{k}_{j}}\left(tv\right)) \text{ for all }  {t} \ge 0 ~\text{satisfying}~\lVert {t} v\rVert \le \Omega_{k}.  
\end{align*}
\noindent
In addition, for each $j \in [\omega_{k}]$, 
\small{\begin{align*}
  \Delta_{-K}(m^{{a}^{k}_{j}}(0)- m^{{a}^{k}_{j}}\left({\bar{t}_j}v\right))\ge -\tfrac{1}{2} \frac{\Delta_{-K}(\nabla f^{a^{k}_{j}}(x_{k})^{\top} v) }{\lVert v \rVert} \min\bigg\{-\frac{\Delta_{-K}(\nabla f^{a^{k}_{j}}(x_{k})^{\top}v)}{\lVert  v \rVert \mathcal{K}_{2}}, \Omega_{k} \bigg\}.
 \end{align*}
}
\end{theorem}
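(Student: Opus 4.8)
The overall strategy is to use the sublinearity of $\Delta_{-K}$ to dominate the (nonsmooth) one-dimensional function $t\mapsto\Delta_{-K}\big(m^{a^k_j}(tv)\big)$ by a genuine scalar quadratic, and then to apply the classical trust-region ``Cauchy point'' sufficient-decrease estimate to that quadratic. Fix $j\in[\omega_k]$ and set $\delta_j:=\Delta_{-K}\big(\nabla f^{a^k_j}(x_k)^\top v\big)$, $\beta_j:=\Delta_{-K}\big(v^\top\nabla^2 f^{a^k_j}(x_k)v\big)$, $T:=\Omega_k/\|v\|$, and $\Phi_j(t):=\Delta_{-K}\big(m^{a^k_j}(tv)\big)$ for $t\in[0,T]$. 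Since $v$ is a $K$-descent direction of $f^{a^k_j}$ at $x_k$, $\nabla f^{a^k_j}(x_k)^\top v\prec_K 0$, so $\delta_j<\Delta_{-K}(0)=0$ by Lemma~\ref{proori}(\ref{htytuttd}) and (\ref{hryx}); and since $\Delta_{-K}$ is $1$-Lipschitz with $\Delta_{-K}(0)=0$ (Lemma~\ref{proori}(\ref{aeruin}),(\ref{hryx})), Assumption~\ref{hess_bnd} gives $|\beta_j|\le\|v^\top\nabla^2 f^{a^k_j}(x_k)v\|\le\mathcal{K}_2\|v\|^2$. Writing $m^{a^k_j}(tv)=t\,\nabla f^{a^k_j}(x_k)^\top v+\tfrac{1}{2}t^2\,v^\top\nabla^2 f^{a^k_j}(x_k)v$ and invoking the subadditivity of $\Delta_{-K}$ (Lemma~\ref{proori}(\ref{tewyueui})) together with its positive homogeneity (Lemma~\ref{proori}(\ref{ubrsyu}), which here reads $\Delta_{-K}(\lambda y)=\lambda\Delta_{-K}(y)$ for $\lambda>0$ since $-K$ is a cone), I obtain for all $t\ge0$ the bound $\Phi_j(t)\le t\,\delta_j+\tfrac{1}{2}t^2\beta_j=:\psi_j(t)$.

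For the existence claim, $\Phi_j$ is continuous on the compact interval $[0,T]$ (a polynomial in $t$ composed with the Lipschitz function $\Delta_{-K}$), hence attains its minimum at some $\bar{t}_j\in[0,T]$; by construction this $\bar{t}_j$ satisfies $\Delta_{-K}(m^{a^k_j}(\bar{t}_j v))\le\Delta_{-K}(m^{a^k_j}(tv))$ for every $t\ge0$ with $\|tv\|\le\Omega_k$, and $\|\bar{t}_j v\|=\bar{t}_j\|v\|\le\Omega_k$, which is the first assertion. To see $\bar{t}_j>0$, note $\Phi_j(0)=\Delta_{-K}(0)=0$ while $\psi_j(t')<0$ for a suitable $t'\in(0,T]$ (if $\beta_j\le0$, any $t'\in(0,T]$ works; if $\beta_j>0$, take $t'=\min\{T,\,|\delta_j|/\beta_j\}$), whence $\Phi_j(t')\le\psi_j(t')<0=\Phi_j(0)$, so the minimum cannot be attained at $0$.

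For the decrease estimate, first use $m^{a^k_j}(0)=0$ and the second inequality in Lemma~\ref{proori}(\ref{tewyueui}) to get $\Delta_{-K}\big(m^{a^k_j}(0)-m^{a^k_j}(\bar{t}_j v)\big)\ge\Delta_{-K}(0)-\Delta_{-K}\big(m^{a^k_j}(\bar{t}_j v)\big)=-\Phi_j(\bar{t}_j)$. Since $\Phi_j\le\psi_j$ on $[0,T]$ we have $\Phi_j(\bar{t}_j)=\min_{[0,T]}\Phi_j\le\min_{[0,T]}\psi_j$, hence, using $\psi_j(0)=0$, $-\Phi_j(\bar{t}_j)\ge\psi_j(0)-\min_{[0,T]}\psi_j$. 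It therefore suffices to establish the scalar inequality $\psi_j(0)-\min_{t\in[0,T]}\psi_j(t)\ge\tfrac{1}{2}|\delta_j|\,\min\{|\delta_j|/(\mathcal{K}_2\|v\|^2),\,T\}$, which, after substituting $T=\Omega_k/\|v\|$ and $|\delta_j|=-\delta_j$, is exactly the bound in the statement. This last inequality is the textbook Cauchy-decrease estimate for the univariate quadratic $\psi_j(t)=t\delta_j+\tfrac{1}{2}t^2\beta_j$ with $\delta_j<0$ and $|\beta_j|\le\mathcal{K}_2\|v\|^2$: one splits into (a) $\beta_j\le0$, where $\psi_j$ is strictly decreasing on $[0,T]$ so the minimum is at $T$ and $\psi_j(0)-\psi_j(T)\ge-\delta_j T$; (b) $\beta_j>0$ with the unconstrained minimizer $-\delta_j/\beta_j$ lying in $[0,T]$, where $\psi_j(0)-\min\psi_j=\delta_j^2/(2\beta_j)\ge\delta_j^2/(2\mathcal{K}_2\|v\|^2)$; and (c) $\beta_j>0$ with $-\delta_j/\beta_j>T$, where $\psi_j$ is decreasing on $[0,T]$ and the relation $\beta_j<-\delta_j/T$ yields $\psi_j(0)-\psi_j(T)>-\tfrac{1}{2}\delta_j T$. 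In each case the required bound follows, and the argument is identical for every $j\in[\omega_k]$.

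The delicate point is the reduction carried out in the first and third paragraphs: because $\Delta_{-K}$ is only sublinear (not linear), one cannot equate $\Phi_j$ with a quadratic and must instead dominate it \emph{from above} by $\psi_j$, and then use $\Delta_{-K}(y_1)-\Delta_{-K}(y_2)\le\Delta_{-K}(y_1-y_2)$ to convert the natural quantity $-\Phi_j(\bar{t}_j)$ into a lower bound for $\Delta_{-K}\big(m^{a^k_j}(0)-m^{a^k_j}(\bar{t}_j v)\big)$, which is the quantity appearing in the statement. Once this reduction is in place, the remainder is the classical one-dimensional trust-region computation.
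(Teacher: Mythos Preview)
Your proof is correct and follows essentially the same approach as the paper's: both use continuity on the compact interval $[0,\Omega_k/\|v\|]$ for existence of $\bar t_j$, exploit subadditivity and positive homogeneity of $\Delta_{-K}$ to bound $\Phi_j(t)$ above by the scalar quadratic $\psi_j(t)=t\delta_j+\tfrac12 t^2\beta_j$, and then carry out the identical three-case Cauchy-point analysis (curvature $\le 0$; curvature $>0$ with unconstrained minimizer inside; curvature $>0$ with minimizer outside). Your presentation is slightly more streamlined in that you introduce the dominating quadratic $\psi_j$ explicitly at the outset and reduce everything to $\psi_j(0)-\min_{[0,T]}\psi_j$, whereas the paper works the cases directly on $\Delta_{-K}(m^{a^k_j}(\tilde t_j v))$ and also carries an auxiliary point $t^*_j=\Delta_{-K}(-\nabla f^{a^k_j}(x_k)^\top v)/\beta_j$ that you do not need; but the substance and the key inequalities are the same.
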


\begin{proof}
Let $x_{k}$ be not a $K$-critical point of \eqref{fgcx}. We define a function $\phi_j: \left[0, \frac{\Omega_{k}}{\lVert v \rVert }\right] \to \mathbb{R}$ by 
    $\phi_j(t) := \Delta_{-K}(m^{{a}^{k}_{j}}(tv)).$ 
Since $ \Delta_{-K}$ is  continuous on $\mathbb{R}^m$  and $m^{{a}^{k}_{j}}$ is continuous on $\mathbb R^{n}$, $\phi_j$ is also continuous on the compact set $\left[0, \frac{\Omega_{k}}{\lVert v \rVert}\right]$. So, there exists $\bar t_j $ in $\left[0, \frac{\Omega_{k}}{\lVert v \rVert}\right]$ such that $\phi_j(\bar t_j) \le \phi_j (t)$ for all $t$ in $\left[0, \frac{\Omega_{k}}{\lVert v \rVert}\right]$. 
We show that $\bar t_j \neq 0$. Then, the first part of the result will be followed.

As $x_k$ is not a $K$-critical point of \eqref{fgcx}, in the lines of the proof of \eqref{aux_deb_g2}, there exists $t_j'>0$ such that $t'_j v =: s_k' \in \mathcal{B}_k$ satisfies $
 \Delta_{-K}(m^{a^k_j} (s_k')) < 0,~   
\text{i.e., }  \phi_j(t'_j) < 0.  $
Hence, $\phi_j(\bar t_j) \le \phi_j(t_j') < 0 = \phi_{j}(0)$, and thus $\bar t_j \ne 0$.

For the second part of the result, we pick $t^{\ast}_j := \frac{\Delta_{-K}(- \nabla f^{{a}^{k}_{j}}(x_{k})^{\top} v)}{ \Delta_{-K}(v ^{\top} \nabla^{2}f^{{a}^{k}_{j}}(x_{k}) v ) }.$ 
Here, there are two possible cases. \\

\noindent
\fbox{Case 1.} Let $\Delta_{- K}(v^{\top} \nabla^{2}f^{{a}^{k}_{j}}(x_{k})v) >0$. In this case, by Lemma \ref{proori} (\ref{tewyueui}), we see that 
\begin{align*}
 t^{\ast}_j = \frac{\Delta_{-K}(- \nabla f^{{a}^{k}_{j}}(x_{k})^{\top} v)}{ \Delta_{-K}(v ^{\top} \nabla^{2}f^{{a}^{k}_{j}}(x_{k}) v ) } \ge -\frac{\Delta_{- K}( \nabla f^{{a}^{k}_{j}}(x_{k})^{\top}v) }{\Delta_{- K  }( v^{\top} \nabla^{2}f^{{a}^{k}_{j}}(x_{k}) v)} >0.   
\end{align*}
Here, we have two further subcases. \\ 

\noindent
\fbox{Subcase 1 (i).} Suppose that $t^{\ast}_jv$ lies within the trust-region, i.e., $\lVert t^{\ast}_j v  \rVert \le \Omega_{k}$. We choose $$\tilde t_j := -\frac{\Delta_{- K}( \nabla f^{{a}^{k}_{j}}(x_{k})^{\top}v) }{\Delta_{-K}(v^{\top} \nabla^{2}f^{{a}^{k}_{j}}(x_{k}) v)} \le \frac{\Delta_{- K}(- \nabla f^{{a}^{k}_{j}}(x_{k})^{\top} v) }{ \Delta_{- K}(v^{\top} \nabla^{2}f^{{a}^{k}_{j}}(x_{k})v)} = t^{\ast}_j. $$ Then,  
\allowdisplaybreaks
 \begin{align*}
   &~ \Delta_{- K}(m^{{a}^{k}_{j}}(\bar t_j v)) \le \Delta_{- K}(m^{{a}^{k}_{j}}(\tilde t_j v)) \\
   = &~  \Delta_{-K}(\nabla f^{{a}^{k}_{j}}(x_{k})^{\top} \tilde{t}_j v+\tfrac{1}{2} (\tilde{t}_j v)^{\top}\nabla^{2}f^{{a}^{k}_{j}}(x_{k})(\tilde{t}_j v)) \text{ by Lemma \ref{proori} \eqref{ubrsyu} and \eqref{tewyueui}}\\
   \le &~ \tilde{t}_j \Delta_{- K}( \nabla f^{{a}^{k}_{j}}(x_{k})^{\top}v)+ \tfrac{1}{2} \tilde{t}_j^{2} \Delta_{- K}(v^{\top}\nabla^{2}f^{{a}^{k}_{j}}(x_{k})v) \\
   = &~ -\frac{\Delta_{- K}( \nabla f^{{a}^{k}_{j}}(x_{k})v)}{ \Delta_{-K}(v^{\top} \nabla^{2}f^{{a}^{k}_{j}}(x_{k}) v)} \Delta_{-K}(\nabla f^{{a}^{k}_{j}}(x_{k}) v)
 \\
&~+ \tfrac{1}{2}\left(\tfrac{\Delta_{-K}(\nabla f^{{a}^{k}_{j}}(x_{k})^{\top}v)}{{\Delta_{-K}(v^{\top} \nabla^{2}f^{{a}^{k}_{j}}(x_{k}) v)} } \right)^{2}{\Delta_{-K}(v^{\top} \nabla^{2}f^{{a}^{k}_{j}}(x_{k})v)} = -\tfrac{1}{2} \frac{(\Delta_{- K} (\nabla f^{{a}^{k}_{j}}(x_{k})^{\top}v)^{2} }{\Delta_{-K}(v^{\top} \nabla^{2}f^{{a}^{k}_{j}}(x_{k}) v)}.
 \end{align*}
From Lemma \ref{proori} (\ref{aeruin}),  we get 
\begin{align*}
     \lvert \Delta_{- K}( v^{\top} \nabla^{2} f^{{a}^{k}_{j}}(x_{k})v) -\Delta_{-K}(0)\rvert \le {\lVert v \rVert^{2} \lVert \nabla^{2} f^{{a}^{k}_{j}}(x_{k}) \rVert }\le \mathcal{K}_{2} \|v\|^2.
\end{align*}
Now, using the fact $ \Delta_{- K}(m^{{a}^{k}_{j}}_{k}(0))=0$, we obtain
 \begin{align}\label{tyduire}
    - \Delta_{- K}(m^{{a}^{k}_{j}}_{k}(\bar t_j v)) \ge \tfrac{1}{2} \frac{ (\Delta_{- K} (\nabla f^{{a}^{k}_{j}}(x_{k})^{\top} v))^{2}}{\Delta_{-K}(v^{\top} \nabla^{2}f^{{a}^{k}_{j}}(x_{k}) v)}  
     \ge \tfrac{1}{2} \frac{( \Delta_{- K} (\nabla f^{{a}^{k}_{j}}(x_{k})^{\top} v))^{2}}{{\lVert v \rVert}^2 \mathcal{K}_{2}}.
 \end{align}

\noindent
\fbox{Subcase 1 (ii).} Let $t^{\ast}_jv$ lie outside the trust-region, i.e., $\lVert t^{\ast}_j v \rVert  > \Omega_{k}$. In this subcase, we take $\tilde{t}_j := \frac{\Omega_{k}}{\lVert v \rVert}$. Then,  $\Delta_{-K}(m^{{a}^{k}_{j}}_{k}(\bar t_j v)) \le \Delta_{- K}(m^{{a}^{k}_{j}}_{k}(\tilde t_j v))$, and hence   
\begin{align}\label{dg_aux_0809_1}
   & \Delta_{-K}(m^{{a}^{k}_{j}}_{k}(\bar t_j v)) \le \tfrac{\Omega_{k}}{\lVert v \rVert} \Delta_{-K} ( \nabla f^{{a}^{k}_{j}}(x_{k})^{\top} v)+ \tfrac{1}{2} \left(\tfrac{\Omega_{k}}{\lVert v \rVert}\right)^{2} \Delta_{-K} (v^{\top} \nabla^{2}f^{{a}^{k}_{j}}(x_{k})v).
    \end{align}
From  $ - \frac{\Delta_{- K}(\nabla f^{{a}^{k}_{j}}(x_{k})^{\top} v)}{\Delta_{- K}( v^{\top} \nabla^{2} f^{{a}^{k}_{j}}(x_{k})v)} \lVert v \rVert = \| {t}^*_j v\| > \Omega_{k}= \tilde t_j\lVert v \rVert$, we have $$ \frac{\Delta_{-K}(\nabla f^{{a}^{k}_{j}}(x_{k})^{\top} v) }{\tilde t_j} < -\Delta_{- K}(v^{\top} \nabla^{2} f^{{a}^{k}_{j}}(x_{k})v). $$
Therefore, from \eqref{dg_aux_0809_1}, we get 
\begin{align}\label{weteh}
  \Delta_{- K}(m^{{a}^{k}_{j}}_{k}(0))- \Delta_{- K}(m^{{a}^{k}_{j}}_{k}(\bar t_j v)) \ge & -\frac{\Omega_{k}}{\lVert v \rVert}\Delta_{- K} ( \nabla f^{{a}^{k}_{j}}(x_{k})^{\top} v) \\
  +\tfrac{1}{2} \left(\frac{\Omega_{k}}{\lVert v \rVert}\right)^{2} \frac{\Delta_{- K}(\nabla f^{{a}^{k}_{j}}(x_{k})^{\top}v) }{\tilde t_j} =&  -\tfrac{1}{2}\frac{\Omega_{k}}{\lVert v \rVert}\Delta_{- K} (\nabla f^{{a}^{k}_{j}}(x_{k})^{\top} v). \nonumber
\end{align}

\noindent
\fbox{Case 2.} Let $\Delta_{- K}(v^{\top} \nabla^{2} f^{{a}^{k}_{j}}(x_{k})v) \le 0 $. In this case, taking $ \tilde{t}_j := \frac{\Omega_{k}}{\lVert v \rVert} $, we see that  
\begin{align*}
     \Delta_{-K}(m^{{a}^{k}_{j}}(\bar t_j v)) \le \Delta_{-K} (m^{{a}^{k}_{j}}(\tilde t_j v)) \le &~ \frac{\Omega_{k}}{\lVert v \rVert} \Delta_{- K} ( \nabla f^{{a}^{k}_{j}}(x_{k})^{\top}v)\\
    + \tfrac{1}{2} \left(\frac{\Omega_{k}}{\lVert v \rVert }\right)^{2}\Delta_{- K} (v^{\top} \nabla^{2} f^{{a}^{k}_{j}}(x_{k})^{\top} v) \le &~ \frac{\Omega_{k}}{\lVert v \rVert}\Delta_{- K} ( \nabla f^{{a}^{k}_{j}}(x_{k})^{\top} v).
\end{align*}
Then, 
\begin{align}\label{pterhd}
  \Delta_{-K}(m^{{a}^{k}_{j}}(0) )-\Delta_{- K}(m^{{a}^{k}_{j}}(\bar t_j v))  \ge -\tfrac{1}{2}\frac{\Omega_{k}}{\lVert v \rVert}\Delta_{- K} ( \nabla f^{{a}^{k}_{j}}(x_{k})^{\top} v).\end{align}
Accumulated from (\ref{tyduire}), (\ref{weteh}), and (\ref{pterhd}), we have
\begin{align*}
 &~ \Delta_{-K}(m^{{a}^{k}_{j}}(0)-m^{a^{k}_{j}}(\bar t_j v)) \\
 \ge &~ \Delta_{-K}(m^{{a}^{k}_{j}}(0) )-\Delta_{-K}(m^{{a}^{k}_{j}}(\bar t_j v)) \text{ by Lemma \ref{proori} (\ref{tewyueui})} \\ 
 \ge &~ -\tfrac{1}{2} \left(\frac{ \Delta_{- K} (\nabla f^{{a}^{k}_{j}}(x_{k})^{\top} v)}{\lVert v \rVert} \right)\min \left\{ 
 -\frac{\Delta_{-K}(\nabla f^{{a}^{k}_{j}}(x_{k})^{\top} v)}{\lVert v \rVert\mathcal K_{2}},\Omega_{k}\right \} > 0, 
\end{align*}
which completes the proof.
\end{proof}

\begin{cor}\label{tncdfd} If $s_{k}$ is a solution of the subproblem \eqref{ghtyu}, and $x_{k}$ is not a $K$-critical point of \eqref{fgcx}, then there exists a positive constant $\beta$ such that  for all $j \in [\omega_{k}]$, $\Delta_{-K}(m^{{a}^{k}_{j}}(0)) \ge \Delta_{-K} (m^{{a}^{k}_{j}}(s_{k}))$ and  
 \begin{align*}
    & \Delta_{-K}(m^{{a}^{k}_{j}}(0)- m^{{a}^{k}_{j}}\left(s_{k}\right)) \ge -\tfrac{\beta}{2} \frac{\Delta_{-K}(\nabla f^{{a}^{k}_{j}}(x_{k})^{\top} s_{k})}{\lVert s_{k} \rVert}  \min\left\{-\tfrac{\Delta_{-K}(\nabla f^{{a}^{k}_{j}}(x_{k})^{\top}s_{k})}{\lVert  s_{k} \rVert \mathcal{K}_{2}}, \Omega_{k}  \right\}.  
 \end{align*}   
\end{cor}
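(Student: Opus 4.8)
I would split the statement into its two parts. The first inequality, $\Delta_{-K}(m^{{a}^{k}_{j}}(0)) \ge \Delta_{-K}(m^{{a}^{k}_{j}}(s_{k}))$, is essentially free: since $m^{{a}^{k}_{j}}(0)=0$ we have $\Delta_{-K}(m^{{a}^{k}_{j}}(0))=\Delta_{-K}(0)=0$ by Lemma \ref{proori}~(\ref{hryx}); on the other hand $s_{k}$ is feasible for (\ref{ghtyu}), whose optimal value equals $\theta(x_{k})$ by (\ref{sdgftdu}), and $\theta(x_{k})<0$ because $x_{k}$ is not $K$-critical (Theorem \ref{critopti}~(\ref{sucfr}), (i)$\Rightarrow$(ii)). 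Hence $\Delta_{-K}(m^{{a}^{k}_{j}}(s_{k}))\le\theta(x_{k})<0$ for every $j\in[\omega_{k}]$. Feasibility of $s_{k}$ in (\ref{ghtyu}) also gives $\Delta_{-K}(\nabla f^{{a}^{k}_{j}}(x_{k})^{\top}s_{k})\le\theta(x_{k})<0$, so by Lemma \ref{proori}~(\ref{fyf}), $\nabla f^{{a}^{k}_{j}}(x_{k})^{\top}s_{k}\in\operatorname*{int}(-K)$, i.e.\ $\nabla f^{{a}^{k}_{j}}(x_{k})^{\top}s_{k}\prec_{K}0$; by a first-order argument this makes $s_{k}$ a $K$-descent direction of each $f^{{a}^{k}_{j}}$ at $x_{k}$.

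For the second part, the plan is to apply Theorem \ref{fsgcsb} with the common descent direction $v:=s_{k}$. This yields, for each $j\in[\omega_{k}]$, a scalar $\bar t_{j}>0$ with $\lVert\bar t_{j}s_{k}\rVert\le\Omega_{k}$ such that $\Delta_{-K}(m^{{a}^{k}_{j}}(\bar t_{j}s_{k}))\le\Delta_{-K}(m^{{a}^{k}_{j}}(ts_{k}))$ for all admissible $t\ge 0$, together with
\begin{align*}
\Delta_{-K}\!\left(m^{{a}^{k}_{j}}(0)-m^{{a}^{k}_{j}}(\bar t_{j}s_{k})\right)\ge -\tfrac{1}{2}\frac{\Delta_{-K}(\nabla f^{{a}^{k}_{j}}(x_{k})^{\top}s_{k})}{\lVert s_{k}\rVert}\min\!\left\{-\frac{\Delta_{-K}(\nabla f^{{a}^{k}_{j}}(x_{k})^{\top}s_{k})}{\lVert s_{k}\rVert\mathcal{K}_{2}},\ \Omega_{k}\right\},
\end{align*}
which is already exactly the desired bound, with $\beta=1$, \emph{except that it is written for the rescaled point $\bar t_{j}s_{k}$ rather than for $s_{k}$ itself}. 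Because $\lVert s_{k}\rVert\le\Omega_{k}$, the value $t=1$ is admissible in the one-dimensional minimization defining $\bar t_{j}$, so $\Delta_{-K}(m^{{a}^{k}_{j}}(\bar t_{j}s_{k}))\le\Delta_{-K}(m^{{a}^{k}_{j}}(s_{k}))$, and by Lemma \ref{proori}~(\ref{tewyueui}), $\Delta_{-K}(m^{{a}^{k}_{j}}(0)-m^{{a}^{k}_{j}}(s_{k}))\ge\Delta_{-K}(m^{{a}^{k}_{j}}(0))-\Delta_{-K}(m^{{a}^{k}_{j}}(s_{k}))=-\Delta_{-K}(m^{{a}^{k}_{j}}(s_{k}))$. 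So it remains to bound $-\Delta_{-K}(m^{{a}^{k}_{j}}(s_{k}))$ from below by a fixed multiple of the Cauchy-type quantity on the right.

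This last step — producing the constant $\beta$ — is the real obstacle, and it is here that Assumptions \ref{grad_bnd} and \ref{hess_bnd} enter. The difficulty is that $s_{k}$ solves the \emph{scalarized min--max} subproblem (\ref{ghtyu}), balancing all $\omega_{k}$ model pieces and their linearizations simultaneously, rather than minimizing the individual map $t\mapsto\Delta_{-K}(m^{{a}^{k}_{j}}(ts_{k}))$ along its own ray; hence the accepted step length $\lVert s_{k}\rVert$ need not be the one optimal for $f^{{a}^{k}_{j}}$, and one must quantify the loss this causes. I would handle it by re-running the case analysis of the proof of Theorem \ref{fsgcsb} directly with $s_{k}$ in place of $\bar t_{j}s_{k}$: use the optimality of $s_{k}$ in (\ref{ghtyu}) to compare $\Theta_{x_{k}}(a^{k},s_{k})$ with $\Theta_{x_{k}}(a^{k},\tau s_{k})$ for the test values $\tau=-\Delta_{-K}(\nabla f^{{a}^{k}_{j}}(x_{k})^{\top}s_{k})/\Delta_{-K}(s_{k}^{\top}\nabla^{2}f^{{a}^{k}_{j}}(x_{k})s_{k})$ and $\tau=\Omega_{k}/\lVert s_{k}\rVert$, estimate the quadratic remainders via $\lVert s_{k}\rVert\le\Omega_{\max}$, $\lVert\nabla f^{i}\rVert\le\mathcal{K}_{1}$, $\lVert\nabla^{2}f^{i}\rVert\le\mathcal{K}_{2}$ and the $1$-Lipschitzness of $\Delta_{-K}$ (Lemma \ref{proori}~(\ref{aeruin})), and absorb the resulting data-dependent factor into $\beta$. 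The remaining parts are routine and lean entirely on Theorem \ref{fsgcsb}; the quantitative comparison between the per-function Cauchy step $\bar t_{j}s_{k}$ and the actually accepted step $s_{k}$, and the extraction of an iteration-independent $\beta$ from it, is where the work lies.
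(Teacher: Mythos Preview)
The paper's own proof is the single word ``Straightforward''; the intended argument is just to invoke Theorem~\ref{fsgcsb} with $v:=s_{k}$, and your setup (showing $s_{k}$ is a common $K$-descent direction via $\Delta_{-K}(\nabla f^{{a}^{k}_{j}}(x_{k})^{\top}s_{k})\le\theta(x_{k})<0$, then citing the theorem) is exactly that. Your proof of the first inequality is correct and is what the authors have in mind.

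Where you depart from the paper is in the last step. You worry that Theorem~\ref{fsgcsb} gives the bound at $\bar t_{j}s_{k}$ rather than at $s_{k}$, and you outline a nontrivial case analysis to extract an iteration-independent $\beta$. But read the corollary as written: it only asserts the \emph{existence} of some $\beta>0$ at the given iterate; nothing says $\beta$ must be uniform in $k$. You have already established $\Delta_{-K}(m^{{a}^{k}_{j}}(0)-m^{{a}^{k}_{j}}(s_{k}))\ge -\Delta_{-K}(m^{{a}^{k}_{j}}(s_{k}))>0$ for every $j$, and the right-hand side of the claimed inequality (divided by $\beta$) is a fixed positive number for each $j$ since $\Delta_{-K}(\nabla f^{{a}^{k}_{j}}(x_{k})^{\top}s_{k})<0$. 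With only finitely many $j\in[\omega_{k}]$, choose $\beta$ small enough and you are done. That is the ``straightforward'' argument.

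Your instinct that uniformity in $k$ matters is correct for how the corollary is later \emph{used} (Theorem~\ref{rfgrwe} and Lemma~\ref{utdrb} treat $\beta$ as a fixed constant in $(0,1)$), and the careful comparison you sketch would be the way to justify that. But the paper does not supply that argument here; your plan goes well beyond what the authors actually prove for this corollary.
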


\begin{proof}
Straightforward. 
\end{proof}

\begin{theorem}\label{approx_obj_model} If $x_{k}+ s_{k} \in \mathcal{B}_{k}$, then for all $j\in [{\omega}_{k}]$,  
\begin{align*}
    \lvert \Delta_{-K}(f^{{a}^{k}_{j}}(x_{k}+ s_{k})- m^{{a}^{k}_{j}}(x_{k}+s_{k})) \rvert \le \mathcal{K}  \Omega^{2}_{k}. 
\end{align*}
\end{theorem}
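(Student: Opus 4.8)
The plan is to recognize the vector inside $\Delta_{-K}$ as the second-order Taylor remainder of $f^{{a}^{k}_{j}}$ at $x_{k}$ and then push it through the $1$-Lipschitz estimate for $\Delta_{-K}$. Reading $m^{{a}^{k}_{j}}(x_{k}+s_{k})$ as $f^{{a}^{k}_{j}}(x_{k})+m^{{a}^{k}_{j}}(s_{k})$ --- that is, the quadratic model \eqref{mode_tru} of $f^{{a}^{k}_{j}}$ built at $x_{k}$ regarded as a function of the new point --- the argument of $\Delta_{-K}$ is exactly
\[
e^{k}_{j} := f^{{a}^{k}_{j}}(x_{k}+s_{k})-f^{{a}^{k}_{j}}(x_{k})-m^{{a}^{k}_{j}}(s_{k}) \in \mathbb R^{m}.
\]
First I would reduce the scalar bound to a norm bound on $e^{k}_{j}$: since $0\in\operatorname*{bd}(-K)$, Lemma \ref{proori}~(\ref{hryx}) gives $\Delta_{-K}(0)=0$, so Lemma \ref{proori}~(\ref{aeruin}) yields $|\Delta_{-K}(e^{k}_{j})|=|\Delta_{-K}(e^{k}_{j})-\Delta_{-K}(0)|\le\|e^{k}_{j}\|$.

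Next I would estimate $\|e^{k}_{j}\|$ by Taylor's theorem applied componentwise. For each $l\in[m]$ there is a point $\xi^{l}$ on the segment joining $x_{k}$ and $x_{k}+s_{k}$ with
\[
f^{{a}^{k}_{j},l}(x_{k}+s_{k})=f^{{a}^{k}_{j},l}(x_{k})+\nabla f^{{a}^{k}_{j},l}(x_{k})^{\top}s_{k}+\tfrac12\, s_{k}^{\top}\nabla^{2} f^{{a}^{k}_{j},l}(\xi^{l})\,s_{k},
\]
so the $l$-th coordinate of $e^{k}_{j}$ equals $\tfrac12\, s_{k}^{\top}\bigl(\nabla^{2} f^{{a}^{k}_{j},l}(\xi^{l})-\nabla^{2} f^{{a}^{k}_{j},l}(x_{k})\bigr)s_{k}$, because the first-order terms match exactly. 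Using Assumption \ref{hess_bnd} (each component Hessian is dominated in norm by the full Hessian of $f^{{a}^{k}_{j}}$) together with $\|s_{k}\|\le\Omega_{k}$, which holds since $x_{k}+s_{k}\in\mathcal B_{k}$, I get
\[
|(e^{k}_{j})_{l}|\le\tfrac12\,\|s_{k}\|^{2}\bigl(\|\nabla^{2} f^{{a}^{k}_{j},l}(\xi^{l})\|+\|\nabla^{2} f^{{a}^{k}_{j},l}(x_{k})\|\bigr)\le\mathcal K_{2}\,\Omega_{k}^{2}.
\]
Collecting the $m$ coordinates (and, if the working norm on $\mathbb R^{m}$ is not the $\ell_{\infty}$-norm, invoking equivalence of norms) gives $\|e^{k}_{j}\|\le c\,\mathcal K_{2}\,\Omega_{k}^{2}$ with a constant $c=c(m)$ independent of $j$ and $k$. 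Setting $\mathcal K:=c\,\mathcal K_{2}$ then finishes the proof, uniformly over $j\in[\omega_{k}]$.

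This is the classical ``the model agrees with the function to second order'' estimate, so I do not anticipate a genuine obstacle. The only mild subtleties are: (a) correctly interpreting the shorthand $m^{{a}^{k}_{j}}(x_{k}+s_{k})$ so that the constant term $f^{{a}^{k}_{j}}(x_{k})$ is included, otherwise the left-hand side is not small; and (b) transferring the scalar $1$-Lipschitz bound of $\Delta_{-K}$ onto a \emph{vector}-valued remainder, which is precisely why the final constant $\mathcal K$ must absorb both $\mathcal K_{2}$ and a dimension-dependent norm-equivalence factor. No use is made of Assumption \ref{grad_bnd} here, since the gradient of $f^{{a}^{k}_{j}}$ is reproduced exactly by the model at $x_{k}$.
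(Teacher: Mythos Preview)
Your proof is correct and follows essentially the same route as the paper: both recognize $f^{a^k_j}(x_k+s_k)-m^{a^k_j}(x_k+s_k)$ as the second-order Taylor remainder, invoke the $1$-Lipschitz property of $\Delta_{-K}$ (with $\Delta_{-K}(0)=0$) to pass to a norm bound, and then control that norm by Assumption~\ref{hess_bnd} together with $\|s_k\|\le\Omega_k$. The paper simply writes the two Taylor expansions as ``$\|s_k\|^2\,O(1)$'' and subtracts, whereas you carry out the componentwise Lagrange remainder explicitly and track the norm-equivalence constant; your version is more careful about the interpretation of $m^{a^k_j}(x_k+s_k)$, but the argument is the same.
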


\begin{proof}
For each $j \in [{\omega}_{k}]$, as $f^{{a}^{k}_{j}}$ is twice continuously differentiable, by Assumption \ref{hess_bnd}, we get  
\begin{align}\label{ytehf_1}
    f^{{a}^{k}_{j}}(x_{k}+s_{k}) = f^{{a}^{k}_{j}}(x_{k}) + \nabla f^{{a}^{k}_{j}}(x_{k})^{\top}s_{k} + \|s_{k}\|^2 O(1). 
\end{align}
Likewise, for the model function $m^{{a}^{k}_{j}}$, we have  
\begin{align}\label{ytehf_2}
  m^{{a}^{k}_{j}}(x_{k}+s_{k})=f^{{a}^{k}_{j}}(x_{k})+  \nabla f^{{a}^{k}_{j}}(x_{k})^{\top}s_{k} + \|s_{k}\|^2 O(1). 
\end{align}
From (\ref{ytehf_1}) and  (\ref{ytehf_2}), we obtain 
\begin{align}\label{yvxure}
   & \lvert \Delta_{-K}(f^{{a}^{k}_{j}}(x_{k}+ s_{k})- m^{{a}^{k}_{j}}(x_{k}+s_{k}) )\rvert \notag \\ 
   \le ~ & \|f^{{a}^{k}_{j}}(x_{k}+ s_{k})- m^{{a}^{k}_{j}}(x_{k}+s_{k}) \| \text{ by Lemma \ref{proori} (\ref{aeruin})} \notag \\ 
   \le~ &  \mathcal{K} \Omega^{2}_{k} \text{ for some } \mathcal{K} > 0. 
\end{align}
This concludes the proof. 
\end{proof}

\begin{theorem}\label{rfgrwe} 
Let $x_k$ be a not a $K$-critical point of \eqref{fgcx}. At $x_{k}$, if $s_{k}$ is a solution of the subproblem \eqref{ghtyu} with $\Delta_{-K}(\nabla f^{{a}^{k}_{j}}(x_{k})^{\top}  s_{k}) \neq 0$, and \begin{align}\label{fddstere}
    \Omega_{k} \le \frac{\beta \lvert \Delta_{- K}(\nabla f^{{a}^{k}_{j}}(x_{k})^{\top}s_{k}) \rvert(1-\eta_{2})}{2 \mathcal{K}_{2}\lVert s_{k}\rVert}
\end{align}
for all $ j \in [\omega_{k}]$, then the iteration $k$ is successful. 
\end{theorem}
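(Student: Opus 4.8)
The plan is to run the classical trust-region argument, transported to the oriented-distance scalarization: once $\Omega_k$ is small compared with the first-order quantity $|\Delta_{-K}(\nabla f^{a^k_j}(x_k)^{\top}s_k)|$, the quadratic models $m^{a^k_j}$ approximate the $f^{a^k_j}$ on $\mathcal{B}_k$ well enough that every reduction ratio $\rho^{a^k_j}_k$ lies within $1-\eta_2$ of $1$; since $\eta_1<\eta_2$, this forces $\rho^{a^k_j}_k\ge\eta_1$ for all $j\in[\omega_k]$, so the trial step $s_k$ is accepted and the iteration is successful (in fact, it falls in the very-successful Case~2 of Subsection~\ref{subsection_tr_radius_update}).

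First I would record the elementary consequences of the hypotheses. Since $x_k$ is not $K$-critical and $s_k$ solves \eqref{ghtyu}, Theorem~\ref{critopti}(\ref{sucfr}) gives $t_k=\theta(x_k)<0$, so $\Delta_{-K}(\nabla f^{a^k_j}(x_k)^{\top}s_k)\le t_k<0$ for every $j\in[\omega_k]$; in particular $g_j:=-\Delta_{-K}(\nabla f^{a^k_j}(x_k)^{\top}s_k)=|\Delta_{-K}(\nabla f^{a^k_j}(x_k)^{\top}s_k)|>0$, and $\|s_k\|\le\Omega_k$, hence $x_k+s_k\in\mathcal{B}_k$. Applying Corollary~\ref{tncdfd} to the step $s_k$ produces a constant $\beta>0$ with
\[\Delta_{-K}\big(m^{a^k_j}(0)-m^{a^k_j}(s_k)\big)\ \ge\ \tfrac{\beta}{2}\,\tfrac{g_j}{\|s_k\|}\,\min\Big\{\tfrac{g_j}{\|s_k\|\,\mathcal{K}_2},\ \Omega_k\Big\}\qquad(j\in[\omega_k]).\]
Since $\beta(1-\eta_2)/2\le 1$ (read off from the proof of Corollary~\ref{tncdfd}), the bound \eqref{fddstere} gives $\Omega_k\le g_j/(\|s_k\|\,\mathcal{K}_2)$, so the minimum equals $\Omega_k$ and the predicted reduction satisfies $\Delta_{-K}(m^{a^k_j}(0)-m^{a^k_j}(s_k))\ge\tfrac{\beta}{2}\,\tfrac{g_j}{\|s_k\|}\,\Omega_k>0$ for each $j$.

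Then I would estimate the discrepancy between the actual and the predicted reduction. Since $x_k+s_k\in\mathcal{B}_k$, Theorem~\ref{approx_obj_model} yields $|\Delta_{-K}(f^{a^k_j}(x_k+s_k)-m^{a^k_j}(x_k+s_k))|\le\mathcal{K}\Omega_k^2$ for all $j$, where $\mathcal{K}$ is the Hessian-based constant from its proof (which may be taken equal to $\mathcal{K}_2$). Routing this through the $1$-Lipschitz property and the triangle-type inequalities for $\Delta_{-K}$ in Lemma~\ref{proori}(\ref{aeruin}) and (\ref{tewyueui}) — exactly the manipulations used to pass from \eqref{threshold1} to \eqref{dg_aux_18_01_3} — I would obtain, for every $j\in[\omega_k]$, that the actual reduction $-\Delta_{-K}(f^{a^k_j}(x_k+s_k)-f^{a^k_j}(x_k))$ differs from the predicted reduction by at most $\mathcal{K}_2\Omega_k^2$. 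Dividing by the lower bound on the predicted reduction then gives
\[\big|\rho^{a^k_j}_k-1\big|\ \le\ \frac{\mathcal{K}_2\,\Omega_k^2}{(\beta/2)(g_j/\|s_k\|)\,\Omega_k}\ =\ \frac{2\,\mathcal{K}_2\,\|s_k\|\,\Omega_k}{\beta\,g_j}\ \le\ 1-\eta_2,\]
the last inequality being precisely \eqref{fddstere}. Hence $\rho^{a^k_j}_k\ge\eta_2\ge\eta_1$ for all $j\in[\omega_k]$, so by Step~\ref{getwqs} of Algorithm~\ref{avds1} the step is accepted and, by Case~2 of Subsection~\ref{subsection_tr_radius_update}, iteration $k$ is successful.

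The step I expect to be the real obstacle is the discrepancy estimate: Theorem~\ref{approx_obj_model} controls $\Delta_{-K}$ of the model error $f^{a^k_j}(x_k+s_k)-m^{a^k_j}(x_k+s_k)$, whereas the numerator of $\rho^{a^k_j}_k$ is $-\Delta_{-K}(f^{a^k_j}(x_k+s_k)-f^{a^k_j}(x_k))$ and the denominator is $\Delta_{-K}(m^{a^k_j}(0)-m^{a^k_j}(s_k))$; because $\Delta_{-K}$ is not odd for a general (non-symmetric) cone $K$, one cannot move between these by linearity and must instead chain the sub-/super-additivity inequalities of Lemma~\ref{proori}(\ref{tewyueui}) with the $1$-Lipschitz bound. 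A subsidiary point is the bookkeeping of constants ($\mathcal{K}$ versus $\mathcal{K}_2$, and the size of $\beta$) so that the threshold in \eqref{fddstere} is exactly what the estimate consumes; this is routine given the explicit constants in the proofs of Theorems~\ref{fsgcsb} and \ref{approx_obj_model}.
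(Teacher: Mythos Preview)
Your proposal is correct and follows essentially the same route as the paper: lower-bound the predicted reduction via Corollary~\ref{tncdfd} (after noting that \eqref{fddstere} forces $\Omega_k\le g_j/(\|s_k\|\mathcal{K}_2)$ so the minimum is $\Omega_k$), bound the model--actual discrepancy by an $O(\Omega_k^2)$ term, divide, and compare with $1-\eta_2$.

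Two cosmetic differences are worth noting. First, the paper argues by contradiction (assume the iteration is unsuccessful, pick $l$ with $\rho^{a^k_l}_k<\eta_1$, and derive $\rho^{a^k_l}_k>\eta_2$), whereas you argue directly for all $j$; both land at the very-successful case. Second, instead of invoking Theorem~\ref{approx_obj_model}, the paper writes out the Taylor expansion with an intermediate point $\xi_k^j$ and uses the Hessian bound $\|\nabla^2 f^{a^k_j}(x_k)-\nabla^2 f^{a^k_j}(\xi_k^j)\|\le 2\mathcal{K}_2$ directly. More to the point, the paper only establishes the \emph{one-sided} bound $1-\rho^{a^k_l}_k<1-\eta_2$: it passes from $\rho-1=\frac{-\Delta_{-K}(A)-\Delta_{-K}(-M)}{\Delta_{-K}(-M)}$ to a quotient with numerator $-\Delta_{-K}(A)+\Delta_{-K}(M)$ and denominator $-\Delta_{-K}(M)$ via Lemma~\ref{proori}(\ref{tewyueui}), then bounds $\Delta_{-K}(A)-\Delta_{-K}(M)\le\Delta_{-K}(A-M)\le\|A-M\|$. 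This one-sided chain is exactly what is needed and sidesteps the two-sided sublinearity juggling you flag as the obstacle; your stated bound $|\rho^{a^k_j}_k-1|\le 1-\eta_2$ is slightly stronger than required.
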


\begin{proof}
Since $ 0 < \eta_{2}<  1$ and  $ 0 < \beta < 1$, we have $\frac{\beta}{2}(1-\eta_{2}) < 1$. Thus, from  (\ref{fddstere}), we have
\begin{align}\label{yputrdv_uio}
       \Omega_{k} <  \frac{\lvert\Delta_{- K} (\nabla f^{{a}^{k}_{j}}(x_{k})^{\top}s_{k}) \rvert} {\mathcal{K}_{2}\lVert s_{k}\rVert} \text{ for all } j \in [\omega_{k}].
   \end{align}
Since $x_{k}$ is not $K$-critical, from (\ref{yputrdv_uio}) and Corollary \ref{tncdfd}, we obtain for all $j \in [\omega_{k}]$ that  
\begin{align}
    \Delta_{- K}(m^{{a}^{k}_{j}}(0)-m^{{a}^{k}_{j}}(s_{k})) & \ge \Delta_{-K}(m^{{a}^{k}_{j}}(0))-\Delta_{-K}(m^{{a}^{k}_{j}}(s_{k})) \label{tyerypi} \\
   & \ge \tfrac{\beta}{2}\frac{\lvert\Delta_{-K}(\nabla f^{{a}^{k}_{j}}(x_{k})^{\top}s_{k})\rvert}{\lVert s_{k}\rVert } \Omega_{k} >0.\label{dg_aux2_0809} 
    \end{align}
We claim that the $k$-th iteration is successful. On the contrary, let us assume that the $k$-th iteration is unsuccessful. Then, there exists at least one $l \in [\omega_{k}]$  such that $\rho^{a^{k}_{l}}_{k} < \eta_{1}$. From (\ref{uniop_oyt}), this implies that 
\begin{align}\label{oijkl_outj}
 - \Delta_{-K}(f^{a^{k}_{l}}(x_{k}+s_{k})-f^{a^{k}_{l}}(x_{k})) < 0.   
\end{align}
 From the definition of reduction ratio $\rho^{a^{k}_{l}}_{k}$ and 
(\ref{oijkl_outj}), we get 
\begin{align}\label{yhtsd}
    \rho^{{a}^{k}_{l}}_{k}-1= ~~& \frac{-\Delta_{-K}((f^{{a}^{k}_{l}}(x_{k}+s_{k}) -f^{{a}^{k}_{l}}(x_{k}))}{ \Delta_{-K}(-m^{{a}^{k}_{l}}(s_{k}))} -1  \nonumber\\
\overset{(\ref{tyerypi})}{\ge} ~~&\frac{-\Delta_{-K}(f^{{a}^{k}_{l}}(x_{k}+s_{k})-f^{a^{k}_{l}}(s_{k}))+\Delta_{-K}(m^{a^{k}_{l}}(s_{k}))}{\Delta_{-K}(m^{{a}^{k}_{l}}(0)) -\Delta_{- K}(m^{{a}^{k}_{l}}(s_{k})}.
\end{align}
Using (\ref{yhtsd}) and Lemma \ref{proori}(\ref{tewyueui}), we then have $\lambda_{j} \in (0,1)$ such that $\xi^{j}_{k} := x_{k}+\lambda_{j}s_{k}$ satisfies
{\small{\begin{align*}
 1-\rho^{{a}^{k}_{l}}_{k} \le ~& \frac{\Delta_{-K}\left(\frac{1}{2}s^{\top}_{k}\nabla^{2}f^{a^{k}_{l}}(x_{k})s_{k}-\frac{1}{2}s^{\top}_{k}\nabla^{2}f^{a^{k}_{l}}(\xi^{j}_{k})s_{k}\right)}{(\Delta_{-K}(0)-\Delta_{- K}(m^{{a}^{k}_{l}}(s_{k}))} \\  
\le ~&\tfrac{1}{2} \frac{\lVert s_{k}\rVert^{2} \lVert 
\nabla^{2} f^{{a}^{k}_{j}}(x_{k})-\nabla^{2} f^{{a}^{k}_{j}}(\xi^{j}_{k}) \rVert}{(-  \Delta_{- K}(m^{{a}^{k}_{j}}(s_{k}))}   
\overset{(\ref{dg_aux2_0809})}{\le} \frac{2\Omega_{k} \|s_k\| \mathcal{K}_{2}}{\beta \lvert \Delta_{-K}( \nabla f^{{a}^{k}_{j}}(x_{k})^{\top}s_{k}) \rvert}  
 \overset{(\ref{fddstere})} <  1-\eta_{2},  
\end{align*}}}
which gives $\rho^{{a}^{k}_{l}}_{k} > \eta_{2} > \eta_1$. 
This contradicts $\rho^{{a}^{k}_{l}}_{k} <  \eta_{1}$, and thus, it must be true that  $\rho^{a^{k}_{j}}_{k} \ge \eta_{1}$ for all $j \in [\omega_{k}]$. Hence, the $k$-th iteration is successful.  
\end{proof} 

Theorem \ref{rfgrwe} will now help us to establish the $K$-criticality of the limit points of the sequence $\{x_k\}$ of iterates generated by Algorithm \ref{avds1} when the number of successful iterates in $\{x_k\}$ is finite. For this purpose, let us first denote the collection of all successful iterations of  Algorithm \ref{avds1} by 
 \begin{align}\label{ytefg}
   \mathcal{S} := \{ k \in \mathbb{N} : \rho^{{a}^{k}_{j}}_{k} \ge \eta_{1}~\forall~ j \in [\omega_{k}] ~\text{and}~ \exists~ l~ \in [\omega_{k}] ~\text{for which}~\rho^{{a}^{k}_{l}}_{k}< \eta_{2}\}.
\end{align}

\begin{theorem}\label{yrbetd} Let $\{x_k\}$ be a sequence of iterates generated by Algorithm $\ref{avds1}$. If there are only finitely many iterations in $\mathcal{S}$, then there exists $x^*$ such that $x_{k}= x^{\ast}$ for all sufficiently large $k$, and $x^{\ast}$ is a $K$-critical point of \eqref{fgcx}. 
\end{theorem}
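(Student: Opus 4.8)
The plan is to proceed in two steps: first deduce from the hypothesis that the iterates are eventually frozen at some point $x^{*}$ with $\Omega_{k}\to 0$, and then show by contradiction --- via Theorem~\ref{rfgrwe} --- that this $x^{*}$ must be $K$-critical. For the first step, note that by Steps~\ref{getwqs}--\ref{rej_ste} of Algorithm~\ref{avds1} the iterate advances, $x_{k+1}=x_{k}+s_{k}$, precisely on the iterations at which the trial step is accepted, that is, on the iterations collected in $\mathcal{S}$. Since $\mathcal{S}$ is finite by hypothesis, there is an index $k_{0}$ such that every iteration $k\ge k_{0}$ is unsuccessful; hence $x_{k+1}=x_{k}$ for all $k\ge k_{0}$, and $x^{*}:=x_{k_{0}}$ satisfies $x_{k}=x^{*}$ for all $k\ge k_{0}$. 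Moreover, on each of these unsuccessful iterations the radius update in Step~\ref{trust_region_radius_update} gives $\Omega_{k+1}\le\gamma_{2}\Omega_{k}$ with $\gamma_{2}\in(0,1)$, so $\Omega_{k}\le\gamma_{2}^{\,k-k_{0}}\Omega_{k_{0}}\to 0$ as $k\to\infty$.

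Now suppose, towards a contradiction, that $x^{*}$ is not a $K$-critical point of \eqref{fgcx}. By Definition~\ref{spcritic} there exist $\hat a\in P_{x^{*}}$ and $\hat s\in\mathbb R^{n}$ with $\nabla f^{\hat a_{j}}(x^{*})^{\top}\hat s\prec_{K}0$ for all $j$; after normalizing we may take $\hat s$ to be a unit vector, and by Lemma~\ref{proori}(\ref{htytuttd}) each $\Delta_{-K}(\nabla f^{\hat a_{j}}(x^{*})^{\top}\hat s)$ is negative. Put $c:=-\max_{j}\Delta_{-K}(\nabla f^{\hat a_{j}}(x^{*})^{\top}\hat s)>0$. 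The heart of the argument is a lower bound, in terms of $\Omega_{k}$, on the optimal value $t_{k}$ of the subproblem \eqref{ghtyu} at $x^{*}$: for $t\in(0,\Omega_{k}]$ the point $t\hat s$ is feasible for \eqref{ghtyu}, and using the positive homogeneity of $\Delta_{-K}$ (Lemma~\ref{proori}(\ref{ubrsyu}), noting $\lambda^{-1}(-K)=-K$ for $\lambda>0$), its subadditivity (Lemma~\ref{proori}(\ref{tewyueui})), the bound $|\Delta_{-K}(z)|\le\|z\|$ implied by Lemma~\ref{proori}(\ref{aeruin}), and the Hessian bound of Assumption~\ref{hess_bnd}, one obtains $\Theta_{x^{*}}(\hat a,t\hat s)\le -ct+\tfrac12\mathcal K_{2}t^{2}$. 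Minimizing the right-hand side over $t\in[0,\Omega_{k}]$ and using $\Omega_{k}\to 0$, this gives $t_{k}\le-\tfrac12 c\,\Omega_{k}<0$ for all sufficiently large $k$.

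To finish, observe that the pair $(a^{k},s_{k})$ chosen at Step~\ref{step3_choice_of_a} attains the minimum of $\Theta_{x^{*}}$ over $P_{x^{*}}\times\mathcal B_{k}$, so each constraint of \eqref{ghtyu} forces $\Delta_{-K}(\nabla f^{a^{k}_{j}}(x^{*})^{\top}s_{k})\le t_{k}<0$ for every $j$; in particular this quantity is nonzero, $|\Delta_{-K}(\nabla f^{a^{k}_{j}}(x^{*})^{\top}s_{k})|\ge|t_{k}|\ge\tfrac12 c\,\Omega_{k}$, while $s_{k}\neq 0$ (as $\Theta_{x^{*}}(a^{k},0)=0>t_{k}$) and $\|s_{k}\|\le\Omega_{k}$. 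Hence the right-hand side of \eqref{fddstere} is bounded below, uniformly in $k$, by the positive constant $\beta c(1-\eta_{2})/(4\mathcal K_{2})$. Since $\Omega_{k}\to 0$, the condition \eqref{fddstere} of Theorem~\ref{rfgrwe} is satisfied for all sufficiently large $k$, so every such iteration $k$ is successful --- contradicting the fact that all iterations $k\ge k_{0}$ are unsuccessful. Therefore $x^{*}$ is a $K$-critical point of \eqref{fgcx}.

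I expect the main obstacle to be the estimate $t_{k}\le-\tfrac12 c\,\Omega_{k}$, which couples the criticality measure at $x^{*}$ to the shrinking radius; it is crucial here that $x^{*}$ is frozen, so that $\hat a$, $\hat s$ and $c$ are fixed once and for all and the bound on $\Theta_{x^{*}}(\hat a,\cdot)$ is uniform over the finite partition $P_{x^{*}}$. A smaller technical point is that the index $a^{k}\in P_{x^{*}}$ picked at Step~\ref{step3_choice_of_a} may change with $k$ as $\Omega_{k}$ decreases, but this is harmless since the final argument uses only $\Delta_{-K}(\nabla f^{a^{k}_{j}}(x^{*})^{\top}s_{k})\le t_{k}$ and $\|s_{k}\|\le\Omega_{k}$, which hold for whatever admissible $(a^{k},s_{k})$ is selected.
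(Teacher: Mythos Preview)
Your proof is correct, and it takes a somewhat different --- and in one respect more careful --- route than the paper's. Both arguments agree on the first step (the iterates freeze at some $x^{*}$ and $\Omega_{k}\to 0$) and both finish by invoking Theorem~\ref{rfgrwe}; the difference lies in how the hypothesis~\eqref{fddstere} is verified. The paper asserts that, since the iterate is frozen at $x^{*}$, the values $\Delta_{-K}(\nabla f^{a^{k}_{j}}(x^{*})^{\top}s_{k})/\lVert s_{k}\rVert$ form a \emph{singleton} across $k$ and hence their supremum is strictly negative --- a claim that is delicate because $\Omega_{k}$ still varies, so $(a^{k},s_{k})$ need not be constant. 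You bypass this by fixing one test pair $(\hat a,\hat s)$ coming from non-criticality and using it to derive the quantitative bound $t_{k}\le-\tfrac12 c\,\Omega_{k}$; the subproblem constraints $\Delta_{-K}(\nabla f^{a^{k}_{j}}(x^{*})^{\top}s_{k})\le t_{k}$ and $\lVert s_{k}\rVert\le\Omega_{k}$ then push this estimate through to whatever $(a^{k},s_{k})$ the algorithm actually selects, yielding the uniform lower bound $\beta c(1-\eta_{2})/(4\mathcal K_{2})$ for the right-hand side of~\eqref{fddstere}. This makes the argument robust to the variation of $(a^{k},s_{k})$ with $k$ --- exactly the issue you flag in your closing remarks --- whereas the paper's shortcut sidesteps it.
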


\begin{proof}
Let $k_{0}$ be the last successful iteration, and every iteration after that is unsuccessful. Then, by Step \ref{rej_ste} of Algorithm \ref{avds1}, we get all the future unsuccessful iterates as  $x_{k_{0}+1}=x_{k_{0}+q}$ for all $q > 1$. Since all iterations are unsuccessful after $k_0$, by the trust-region radius update of unsuccessful iterations  of Step \ref{trust_region_radius_update} of Algorithm \ref{avds1}, we get $\{\Omega_{k}\}$ converging to zero, i.e., for every $\epsilon >0$, there exists $\bar k > k_0$ such that 
\begin{equation}\label{dg_aux2_0909}
\Omega_{k} < \epsilon \text{ for all } k > \bar{k}.
\end{equation} 
Note that $x_{k_{0}+1}=x_{k_{0}+2} = x_{k_{0}+3} =\cdots = x_{\bar k} = x_{\bar k + 1} = \cdots$, and let us denote these iterates by $x^{\ast}$. If possible, let $x^* = x_{\bar k + 1}$ be not $K$-critical for \eqref{fgcx}. Then, by Definition \ref{spcritic}, we get  for all $k > \bar k$ that 
\begin{align}\label{dg_aux1_0909}
\Delta_{-K}(\nabla f^{{a}^{k +1}_{j}}(x_{k +1})^{\top}s_{k +1}) < 0  \text{ for all } j \in [\omega_{k +1}].
\end{align} 
From (\ref{dg_aux1_0909}), we have 
\begin{align}\label{tybj_iotf}
 \sup\limits_{k > \bar k}\left\{\frac{\Delta_{-K}(\nabla f^{{a}^{k +1}_{j}}(x_{ k +1})^{\top}s_{ k +1})}{\lVert s_{k+1} \rVert}\right\} \le 0.   \end{align}
 However, $\sup\limits_{k > \bar k} \left\{  \frac{\Delta_{-K}(\nabla f^{a^{k+1}_{j}}(x_{k+1})^\top s_{k+1})}{\lVert s_{k+1} \rVert}\right\}$ can never be zero since due to $x_{\bar k+1} = x_{\bar k+2} $ $= x_{\bar k +3} = \cdots = x^*$, $\bigg\{\frac{\Delta_{-K}(\nabla f^{a^{k+1}_{j}}(x_{k+1})^\top s_{k+1})}{\lVert s_{k+1} \rVert}: k > \bar k \bigg\}$ is a singleton set. \\ 
Letting $\epsilon := \frac{\beta}{2\mathcal K_{2}}\left\lvert\sup\limits_{k>\bar k}\left\{\frac{\Delta_{-K}(\nabla f^{{a}^{k +1}_{j}}(x_{ k +1})^{\top}s_{ k +1})}{\lVert s_{k+1} \rVert}\right\}\right\rvert(1-\eta_{2}) \overset{(\ref{tybj_iotf})}{>} 0$, we get 
\begin{align*}
    \forall j \in [\omega_{\bar k+1}]:~ \Omega_{\bar k +1} \overset{\eqref{dg_aux2_0909}}{<}~&\frac{\beta(1-\eta_{2})}{2\mathcal K_{2}}\left\lvert\sup\limits_{k>\bar k}\left\{\frac{\Delta_{-K}(\nabla f^{{a}^{k +1}_{j}}(x_{ k +1})^{\top}s_{ k +1}}{\lVert s_{k+1} \rVert}\right\}\right\rvert \\ 
\overset{\eqref{tybj_iotf}}{\le} ~&\frac{\beta \lvert \Delta_{- K}(\nabla f^{{a}^{\bar k+1}_{j}}(x_{\bar k+1})^{\top}s_{\bar k+1}) \rvert(1-\eta_{2})}{2 \mathcal{K}_{2}\lVert s_{\bar k+1}\rVert}.
\end{align*} 
As a consequence, by Theorem \ref{rfgrwe}, $(\bar k + 1)$-th iteration is a successful iteration. However, this is in contradiction to $x_{k_{0}}$ being the last successful iterate. This proves that $x^* = x_{\bar k + 1}$ is a $K$-critical point of \eqref{fgcx}. 
\end{proof}

\begin{lemma}\label{utdrb}
Suppose that $\{x_k\}$ is a sequence generated by Algorithm \ref{avds1}, and there are infinitely many successful iterations $k \in \mathcal{S}$. Then, we have 
\begin{align}\label{gdgydrg}
        \liminf_{k \to \infty}\max_{j \in [{\omega_{k}}]} \left\{ \frac{\lvert \Delta_{-K}( \nabla f^{{a}^{k}_{j}}(x_{k})^{\top}s_{k}) \rvert}{\lVert s_{k} \rVert} \right\}=0.
\end{align}
\end{lemma}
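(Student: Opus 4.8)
I would prove this by the classical trust‑region ``liminf'' contradiction argument, transplanted to the $\preceq^l_K$‑ordering and the oriented–distance scalarization. Suppose \eqref{gdgydrg} fails; since the displayed quantities are nonnegative, there are $\epsilon>0$ and $k_0\in\mathbb N$ with
\[
\max_{j\in[\omega_k]}\frac{\lvert\Delta_{-K}(\nabla f^{a^k_j}(x_k)^\top s_k)\rvert}{\lVert s_k\rVert}\ \ge\ \epsilon\qquad\text{for all }k\ge k_0,
\]
where $s_k=s(x_k)\neq 0$ on every iteration the algorithm performs (otherwise $x_k$ would be $K$‑critical and the algorithm would have terminated, by Theorem \ref{critopti}). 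The first step is to record that the iterates stay in a fixed bounded set: by \eqref{ghdeotye} a successful (or very successful) step gives $F(x_{k+1})\subseteq F(x_k)+\operatorname{int}(K)$, hence $F(x_{k+1})\preceq^l_K F(x_k)$, while an unsuccessful step leaves $x_k$ fixed; transitivity of $\preceq^l_K$ yields $x_k\in\mathcal L_0$ for all $k$, so $\{x_k\}$ is bounded by Assumption \ref{Assumption_1}. Now introduce the $\preceq^l_K$‑compatible oriented‑distance merit $\Phi(x):=\min_{i\in[p]}\Delta_{-K}(f^i(x))$: using $F(x_{k+1})\subseteq F(x_k)+K$ on steps with $\rho^{a^k_j}_k\ge\eta_1$ for all $j$, the monotonicity of $\Delta_{-K}$ (Lemma \ref{proori}\,(\ref{htytuttd})), and $x_{k+1}=x_k$ otherwise, one gets $\Phi(x_{k+1})\le\Phi(x_k)$ for every $k$. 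Since $\{x_k\}$ is bounded and each $f^i$ continuous (or, alternatively, by Assumption \ref{fun_bndbelow}), $\{\Phi(x_k)\}$ is bounded, hence convergent, so $\sum_{k\in\mathcal S}\bigl(\Phi(x_k)-\Phi(x_{k+1})\bigr)<\infty$.

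For the quantitative part, fix $k\in\mathcal S$ with $k\ge k_0$. Because the iteration is successful, $\rho^{a^k_j}_k\ge\eta_1$ for \emph{every} $j\in[\omega_k]$, so combining \eqref{dg_aux_18_01_3} with the predicted‑reduction lower bound of Corollary \ref{tncdfd} (exactly as in the estimate \eqref{tyerypi}--\eqref{dg_aux2_0809} of the proof of Theorem \ref{rfgrwe}) gives, for each $j$,
\[
\Delta_{-K}(f^{a^k_j}(x_k))-\Delta_{-K}(f^{a^k_j}(x_{k+1}))\ \ge\ \eta_1\tfrac{\beta}{2}\,\frac{\lvert\Delta_{-K}(\nabla f^{a^k_j}(x_k)^\top s_k)\rvert}{\lVert s_k\rVert}\min\!\Bigl\{\tfrac{\lvert\Delta_{-K}(\nabla f^{a^k_j}(x_k)^\top s_k)\rvert}{\lVert s_k\rVert\,\mathcal K_2},\ \Omega_k\Bigr\}.
\]
Selecting the index $j$ realising the maximum in the contradiction hypothesis (so the ratio for that $j$ is $\ge\epsilon$) and bookkeeping with $\Phi(x_k)\le\Delta_{-K}(f^{a^k_j}(x_k))$ and $\Phi(x_{k+1})\le\Delta_{-K}(f^{a^k_j}(x_{k+1}))$, one is led to $\Phi(x_k)-\Phi(x_{k+1})\ge\eta_1\tfrac\beta2\,\epsilon\,\min\{\epsilon/\mathcal K_2,\Omega_k\}$ for all such $k$. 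Summing over the infinitely many $k\in\mathcal S\cap[k_0,\infty)$ and comparing with the finite sum above forces $\min\{\epsilon/\mathcal K_2,\Omega_k\}\to 0$, i.e. $\Omega_k\to 0$ along $\mathcal S$. I would then rule this out: once $\Omega_k<\dfrac{\beta(1-\eta_2)}{2\mathcal K_2}\,\epsilon$, Theorem \ref{rfgrwe} makes iteration $k$ successful, the update rule in Step \ref{trust_region_radius_update} of Algorithm \ref{avds1} then keeps $\Omega_{k+1}\ge\gamma_2\Omega_k$, and since just before such an iteration the radius could have been contracted by at most a factor $\gamma_1$, the sequence $\{\Omega_k\}$ is bounded away from $0$ — a contradiction. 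Hence \eqref{gdgydrg} holds.

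The non‑routine point, where I expect the real work to lie, is the bookkeeping between the \emph{changing} data of the subproblems and a single scalar potential. Because the partition $P_k$ — hence the selected family $\{f^{a^k_j}\}_j$ — changes from iteration to iteration, and because the components of $F$ that are not selected at step $k$ need not decrease, the only potential one can make monotone is an order‑compatible one such as $\Phi=\min_i\Delta_{-K}(f^i)$; one must then check carefully that the index attaining the maximum in \eqref{gdgydrg} is compatible with the decrease estimate for $\Phi$ (if it is not, the cleanest fix is to keep the per‑component estimate above, apply a pigeonhole on $a^k_{j}\in[p]$ over the successful iterations, and combine with the $\preceq^l_K$‑monotonicity). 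A secondary subtlety is that Theorem \ref{rfgrwe} needs its radius hypothesis to hold for \emph{all} $j\in[\omega_k]$, so in the final step one may instead have to pass to a subsequence along which $x_k$ converges to a regular point $x^\ast$, use the continuity of $\theta$ there (Theorem \ref{critopti}(a)), and extract the contradiction from $\theta(x^\ast)=0$ versus $\theta(x^\ast)<0$.
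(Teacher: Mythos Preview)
Your overall strategy—assume the liminf is positive, bound the actual reduction below by $\eta_1$ times the predicted reduction on successful steps via Corollary \ref{tncdfd}, and sum—is the paper's. The two diverge at the finish. The paper does \emph{not} introduce a merit function or argue $\Omega_k\to 0$ via Theorem \ref{rfgrwe}; it simply telescopes
\[
\Delta_{-K}\bigl(f^{a^k_j}(x_0)-f^{a^k_j}(x_{k+1})\bigr)\ \ge\ \sum_{l\in\mathcal S,\ l\le k}\Bigl(\Delta_{-K}(f^{a^k_j}(x_l))-\Delta_{-K}(f^{a^k_j}(x_{l+1}))\Bigr),
\]
replaces $\Omega_k$ inside the per-step $\min$ by a constant $\Omega_{\min}$, obtains a fixed positive lower bound for each summand, and invokes Assumption \ref{fun_bndbelow} to contradict the divergence of the sum. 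Your detour through $\Omega_k\to 0$ and Theorem \ref{rfgrwe} is essentially what one would need to \emph{justify} the existence of such an $\Omega_{\min}$; the paper simply posits it.

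Your $\Phi$-bookkeeping, however, is a genuine gap as written. From $\Phi(x_k)\le\Delta_{-K}(f^{a^k_j}(x_k))$ and $\Phi(x_{k+1})\le\Delta_{-K}(f^{a^k_j}(x_{k+1}))$ you \emph{cannot} deduce $\Phi(x_k)-\Phi(x_{k+1})\ge\Delta_{-K}(f^{a^k_j}(x_k))-\Delta_{-K}(f^{a^k_j}(x_{k+1}))$: the two inequalities point the same way and subtract to nothing. What would work is to take $j_0$ with $\Phi(x_k)=\Delta_{-K}(f^{a^k_{j_0}}(x_k))$ (the minimum of $\Delta_{-K}\circ f^i$ over $i\in[p]$ is always achieved at a $K$-minimal element, hence at some $a^k_{j_0}$) together with $\Phi(x_{k+1})\le\Delta_{-K}(f^{a^k_{j_0}}(x_{k+1}))$—but then the $\epsilon$-lower bound from the contradiction hypothesis lives at the \emph{maximizing} index $j^*$, not at $j_0$, and the chain breaks. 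The pigeonhole repair you sketch has the same defect: once you pin an $i_0\in[p]$ hit infinitely often by $a^k_{j^*(k)}$, on the intermediate successful iterations $f^{i_0}$ is not among the selected components and need not decrease, so the partial sum over that subsequence does not telescope. You have correctly located the real obstacle (the iteration-dependent index set), but neither of your two sketched repairs closes it; the paper, for its part, glosses over exactly this point by keeping the symbol $a^k_j$ fixed across the telescoping sum while tacitly invoking the per-step decrease estimate at each $l$.
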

\begin{proof}
On the contrary, let us assume that (\ref{gdgydrg}) is not true. This implies that there exists $ \epsilon >0$ such that for all $k$ we have a $j \in [\omega_{k}]$ with
\begin{align}\label{tsffsw}
    \frac{\lvert \Delta_{-K}( \nabla f^{{a}^{k}_{j}}(x_{k})^{\top}s_{k}) \rvert}{\lVert s_{k} \rVert}\ge \epsilon. 
\end{align}
Now, let $k$ be a successful iteration and $(t_{k}, s_{k} )$ be the solution of the subproblem (\ref{ghtyu}). Then, for all $j \in [\omega_{k}]$, from (\ref{ytefg}), we have 
   \begin{align*}
   & \rho^{{a}^{k}_{j}}_{k}= \frac{-\Delta_{- K}( -(f^{{a}^{k}_{j}}(x_{k})- f^{{a}^{k}_{j}}(x_{k+1})))}{(\Delta_{- K}(m^{{a}^{k}_{j}}(0)- m^{{a}^{k}_{j}}(s_{k})))} \ge \eta_{1},
\end{align*}
and using Lemma \ref{proori}~(\ref{tewyueui}), we get
$\frac{\Delta_{- K}(f^{{a}^{k}_{j}}(x_{k}))-\Delta_{-K}(f^{{a}^{k}_{j}}(x_{k+1}))}{\Delta_{- K}(m^{{a}^{k}_{j}}(0)- m^{{a}^{k}_{j}}(s_{k}))} \ge \eta_{1}.$
Then, from the sufficient decrease condition for the model $m^{a^{k}}$ of $f^{a^{k}}$ in Theorem \ref{fsgcsb}, we have
\begin{align}\label{iuothr_ioew}
 & \Delta_{- K}(f^{{a}^{k}_{j}}(x_{k}))-\Delta_{- K}(f^{{a}^{k}_{j}}(x_{k+1})) \nonumber\\
  \ge   ~ & ~\eta_{1} \beta \bigg(-\tfrac{1}{2}\frac{\Delta_{- K}(\nabla f^{{a}^{k}_{j}}(x_{k})^{\top}s_{k})}{\lVert s_{k}\rVert} 
  \min \left\{ -\frac{\Delta_{- K}(\nabla f^{{a}^{k}_{j}}(x_{k})^{\top}s_{k})}{\lVert s_{k}\rVert \mathcal K_{2}}, \Omega_{k}\right\}\bigg) \nonumber\\
  \ge ~&~  \eta_{1}\epsilon \beta \tfrac{1}{2} \min \left\{ \frac{\epsilon}{\mathcal{K}_{2}}, \Omega_{\min} \right\}.
\end{align}
For every successful iteration, it holds $x_{k+1}=x_{k}+s_{k}$. Thus, summing over all the successful iterations $k$, we get, from Lemma \ref{proori} (\ref{tewyueui}) that 
\vspace{-0.4cm}
  \begin{align}\label{uyrtf_rre}
      \Delta_{-{K}}( f^{{a}^{k}_{j}}(x_{0})-f^{{a}^{k}_{j}}(x_{k+1})) ~\ge~  & \sum_{l=0, l\in \mathcal{S}}^{k} (\Delta_{- K}(f^{{a}^{k}_{j}}(x_{l}))-\Delta_{- K}(f^{{a}^{k}_{j}}(x_{l+1}))) \nonumber\\
    ~{\ge}~ & \sum_{l=0, l\in \mathcal{S}}^{k} \eta_{1}\epsilon \beta \tfrac{1}{2}\min \left\{ \frac{\epsilon}{\mathcal{K}_{2}}, \Omega_{\min} \right\} ~\text{by}~ (\ref{iuothr_ioew}) \nonumber\\
 ~ \ge~  & \alpha_{k} \eta_{1} \epsilon
    \beta \tfrac{1}{2} \min \left\{ \frac{\epsilon}{\mathcal{K}_{2}}, \Omega_{\min} \right\},
\end{align}
where $\alpha_{k}$ is the number of successful iterations up to the $k$-th iteration. Now, for infinitely many successful iterations, we have
$ \lim_{k \to \infty} \alpha_{k} = \infty$,
which, from (\ref{uyrtf_rre}) and Definition 2.1 in \cite{Ansari2019}, means that either $\lim\limits_{k \to \infty}(f^{{a}^{k}_{j}}(x_{0}))-(f^{{a}^{k}_{j}}(x_{k+1}))= \infty$ or $-K = \emptyset$. The former is in contradiction with Assumption \ref{fun_bndbelow}, and the latter contradicts the fact that $K$ is a solid cone. Therefore, there cannot exist any $\epsilon >0$ satisfying (\ref{tsffsw}), which concludes the proof. 
\end{proof}

\begin{theorem}\label{global_conv_resul}
Let $\{x_{k}\}$ be a sequence of points generated by Algorithm \ref{avds1}, and any limit point of $\{x_{k}\}$ be a regular point of $F$. If $\{ x_{k}\}$ contains infinitely many successful iterations, then $\{ x_{k}\}$ has a subsequence that converges to a $K$-critical point for \eqref{fgcx}. 
\end{theorem}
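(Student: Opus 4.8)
The plan is to carry out the standard trust-region ``criticality of accumulation points'' argument, using Lemma~\ref{utdrb} as the quantitative engine and Theorem~\ref{critopti} as the bridge between the gradient-type quantity occurring there and the criticality measure $\theta$. First I would show that the iterate sequence is bounded: at every iteration $F(x_{k+1}) \preceq^l_K F(x_k)$ --- for an accepted step (successful or very successful) this follows exactly as in \eqref{ghdeotye}, which in fact gives $F(x_{k+1}) \prec^l_K F(x_k)$, and for a rejected step $x_{k+1} = x_k$ --- so by transitivity of $\preceq^l_K$ (valid since $K+K=K$) one gets $F(x_k) \preceq^l_K F(x_0)$, i.e.\ $x_k \in \mathcal{L}_0$, which is bounded by Assumption~\ref{Assumption_1}. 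I would also record that, since the run is infinite (it has infinitely many successful iterations), no $x_k$ is $K$-critical --- otherwise $t_k = \theta(x_k) = 0$ by Proposition~\ref{nrwelyd} and the algorithm would stop at Step~\ref{tocri} --- and hence $\theta(x_k) < 0$ and $s_k \neq 0$ for all $k$, by Theorem~\ref{critopti}(b).

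Next I would invoke Lemma~\ref{utdrb} to choose a subsequence indexed by $\mathcal{S}_0$ along which $\max_{j \in [\omega_k]} |\Delta_{-K}(\nabla f^{a^k_j}(x_k)^\top s_k)| / \|s_k\| \to 0$, and link this with $\theta$ through an elementary bound: since $s_k$ solves \eqref{ghtyu}, $\Delta_{-K}(\nabla f^{a^k_j}(x_k)^\top s_k) \le \theta(x_k) < 0$ for every $j$, whence $|\Delta_{-K}(\nabla f^{a^k_j}(x_k)^\top s_k)| \ge |\theta(x_k)|$; combined with $0 < \|s_k\| \le \Omega_k \le \Omega_{\max}$ this yields
\[ \max_{j \in [\omega_k]} \frac{|\Delta_{-K}(\nabla f^{a^k_j}(x_k)^\top s_k)|}{\|s_k\|} \ge \frac{|\theta(x_k)|}{\Omega_{\max}} \ge 0. \]
Letting $k \to \infty$ within $\mathcal{S}_0$ then forces $\theta(x_k) \to 0$.

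To conclude, I would use boundedness once more: as $\{x_k\}_{k\in\mathcal{S}_0} \subseteq \mathcal{L}_0$, some further subsequence $\mathcal{S}_1 \subseteq \mathcal{S}_0$ satisfies $x_k \to x^*$; then $x^*$ is an accumulation point of $\{x_k\}$, hence a regular point of $F$ by hypothesis, so Theorem~\ref{critopti}(a) applies and $\theta$ is continuous at $x^*$, giving $\theta(x^*) = \lim_{k\in\mathcal{S}_1} \theta(x_k) = 0$. As $\theta(x^*) = 0$ is the negation of $\theta(x^*) < 0$, the equivalence of statements (i) and (ii) in Theorem~\ref{critopti}(b) shows that $x^*$ is a $K$-critical point of \eqref{fgcx}, so $\{x_k\}_{k\in\mathcal{S}_1}$ is the desired convergent subsequence.

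The main obstacle is not a single estimate --- most of the heavy lifting sits in Lemma~\ref{utdrb} --- but rather the use, in the last step, of continuity of $\theta$ at the accumulation point. This continuity holds only at regular points, because $\omega(\cdot)$ and the partition $P_{(\cdot)}$ are locally stable only there (cf.\ Lemma~\ref{regularity_condition}); this is precisely why the hypothesis that every accumulation point of $\{x_k\}$ be regular cannot be relaxed, since without it $\theta(x_k)\to 0$ need not give $\theta(x^*)=0$. The only genuinely new inequality is the bound $\max_j |\Delta_{-K}(\nabla f^{a^k_j}(x_k)^\top s_k)|/\|s_k\| \ge |\theta(x_k)|/\Omega_{\max}$ of the second paragraph; the rest is routine subsequence bookkeeping together with the monotonicity of $\{F(x_k)\}$ with respect to $\preceq^l_K$.
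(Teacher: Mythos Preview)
Your proof is correct and follows a route genuinely different from the paper's. The paper extracts convergent subsequences not only of $x_k$ but also of $s_k$ and (via regularity and Lemma~\ref{regularity_condition}) of the index tuple $a^k$, obtaining a limit triple $(\bar x,\bar s,\bar a)$; it then passes to the limit inside the liminf of Lemma~\ref{utdrb} to deduce that $\Delta_{-K}(\nabla f^{\bar a_j}(\bar x)^\top\bar s)\ge 0$ for some $j$, and concludes by arguing that $\Theta_{\bar x}(\bar a,\bar s)$ coincides with $\theta(\bar x)$. Your argument short-circuits all of this: the inequality
\[
\max_{j\in[\omega_k]}\frac{|\Delta_{-K}(\nabla f^{a^k_j}(x_k)^\top s_k)|}{\|s_k\|}\;\ge\;\frac{|\theta(x_k)|}{\Omega_{\max}}
\]
converts Lemma~\ref{utdrb} directly into $\theta(x_k)\to 0$ along a subsequence, after which a single spatial limit and the continuity of $\theta$ at the regular accumulation point finish the job. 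This buys you two things: you never have to track a limit $\bar s$ of the steps (and so the degenerate case $\bar s=0$, which makes the paper's quotient ill-defined, never arises), and you replace the paper's unjustified identification $\Theta_{\bar x}(\bar a,\bar s)=\theta(\bar x)$ by a straightforward appeal to Theorem~\ref{critopti}(a). The paper's approach, in exchange, does not need your quantitative link between the subproblem value and the gradient quantity. One point worth flagging if you write this up: your displayed inequality uses $\theta(x_k)$ in the sense of the subproblem optimum $t_k$ over $\mathcal{B}_k$ (as in \eqref{sdgftdu}), whereas the continuity statement in Theorem~\ref{critopti}(a) concerns $\theta$ as defined in \eqref{rbdwbyew} over the fixed ball $\mathcal B$; the paper uses the same symbol for both throughout, so this is not a defect of your argument relative to the paper's, but the two are not literally the same function when $\Omega_k<\Omega_{\max}$.
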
  
   
\begin{proof}Without loss of generality (if required take a subsequence), we assume that all the terms of $\{x_{k}\}$ are unsuccessful iterations. Recall that the level set, $\mathcal{L}_{0} := \{x \in \mathbb R^{n}: F(x) \preceq^{l}_{K}  F(x_{0})\}$ of $F$ is bounded. Since Algorithm \ref{avds1} is a $\preceq^{l}_{K}$-decreasing method, we have $x_{1} \in \mathcal{L}_{0}$.
By induction, we prove that  $x_{k+1} \in \mathcal{L}_{0}$ if $x_{k} \in \mathcal{L}_{0}$ for all $k >1$. Using (\ref{ghdeotye}), 
 we have 
 \begin{align*}
\forall ~k\in \mathbb N \cup \{0\}:  F(x_{k+1}) = F(x_{k}+s_{k})\preceq^{l}_{K}F(x_{k})\preceq^{l}_{K}F(x_{0}).
 \end{align*}
 Since $\mathcal{L}_{0}$ is bounded, $\{ x_{k}\}$ is then a bounded sequence, and hence it 
 has a convergent subsequence $\{ x_{k_r}\}$ which converges to $\bar{x}$ say. Since $F$ is continuous, $\mathcal{L}_{0}$ is closed; hence, $\bar x\in \mathcal{L}_{0}$.

 Since $x_{k}$ is a successful iterate for $(\ref{fgcx})$, we have
   $s_{k} = x_{k+1}-x_{k}$, and thus $\{s_{k}\}_{k \in \mathcal{K}}$ is bounded, where $\mathcal K := \{ r_k\}$. So, $\{s_{k}\}_{k\in \mathcal K}$ must have a convergent subsequence $\{ s_{k}\}_{k \in \mathcal K'\subseteq \mathcal K}$, which converges to $\bar s$, say, i.e.,  $s_{k} \xrightarrow{k\in\mathcal{K}'} {\bar s} $.

 Since there is only a finite number of subsets of $[p]$ and $\{x_{k}\}$ is a sequence of regular points of $F$, by Lemma \ref{regularity_condition} and for all sufficiently large $k \in \mathcal{K}'$, we have
$\omega_{k} = \bar{\omega}, a^{k} = \bar{a}.$

  Since $\{x_{k}\}$ and $\{ s_{k}\}$ are generated by Algorithm \ref{avds1}, 
  and also due to the fact that $f^{{a}^{k}_{j}}$ is continuous for all $j\in [\omega_{k}]$, we obtain from Lemma \ref{utdrb} that 
 \small{ \begin{align*}
0 = \liminf_{k\xrightarrow{\mathcal{K'}} \infty}\max_{j \in [{\omega_{k}}]}\left\{ \frac{\lvert \Delta_{-K}( \nabla f^{{a^{k}_{j}}}({x_{k}})^{\top}{s_{k}}) \rvert}{\lVert {s_k} \rVert} \right\} 
\le   ~& \Delta_{-K} \left(\max_{j \in [\bar{\omega}]}\left\{ \frac{ (\nabla f^{\bar{a}_{j}}(\bar x)^{\top}\bar{s})}{\lVert \bar{s} \rVert}  \right\}\right), 
\end{align*} }
 i.e., 
$ \max\limits_{j \in [\bar{\omega}]}\left\{ \frac{ ( \nabla f^{\bar{a}_{j}}(\bar x)^{\top}\bar{s}) }{\lVert \bar{s} \rVert}  \right\}\notin -K$ by (\ref{hryx}) and (\ref{ufbmvfd})  of Lemma \ref{proori}.
 As a result, for  $\bar{s} \in \mathbb R^{n}$, there exists $j \in [\bar{\omega}]$ such that $\nabla f^{\bar{a}_{j}}(\bar x)^{\top}\bar{s} \notin -K$, i.e., 
\begin{align}\label{hgtre_ouyf}
 \Delta_{- K}(\nabla f^{\bar{a}_{j}}(\bar x)^{\top}\bar{s})\ge 0. \end{align}
On the other hand, since $x_{k} \xrightarrow{k\in\mathcal{K'}} \bar x$ and $\theta$ is a continuous map, we have $\theta(\bar x ) \le 0.$
 Thus,
\begin{align*}
0 \le &~\Delta_{-K}(\nabla f^{\bar{a}_{j}}(\bar x)^{\top}\bar{s}) ~\text{by}~(\ref{hgtre_ouyf}) \\
     \le & ~\max_{j \in [\bar{\omega}]} \left\{\Delta_{-K}(\nabla f^{\bar{a}_{j}}(\bar x)^{\top}\bar{s}), \Delta_{-K}\left(\nabla f^{\bar{a}_{j}}(\bar x)^{\top}\bar{s}+\frac{1}{2} {\bar{s}}^{\top}\nabla^{2} f^{\bar{a}_{j}}(\bar x)\bar{s}\right)\right\} \\
     = &~ \Theta_{\bar x}(\bar{a},\bar{s})  =\min_{(\bar{a}, s) \in P_{\bar x}\times \mathbb R^{n}}\Theta_{\bar x}(a,s)
     =  ~\theta(\bar x)\le 0,
\end{align*}
which implies $\theta(\bar x) =0$.
Therefore, from Theorem \ref{critopti},  $\bar x$ must be a $K$-critical point of \eqref{fgcx}. This completes the proof.
\end{proof}

\section{Numerical experiments}\label{Numerical_Experiment}
In this section, we present the numerical performance of the proposed Algorithm \ref{avds1} on some test SOPs given in Table \ref{test_prob_table} and compare it with the existing Steepest Descent (SD) method \cite{steepmethset} and the Conjugate Gradient Method (CGM) \cite{kumar2024nonlinear} using the basic conjugate descent rule with $\beta^{\text{CGM}}_{k} := 0.99(1-\sigma)\beta^{\text{CD}}_{k}$, where $\beta^{\text{CD}}_{k}$ is given by (49) in \cite{kumar2024nonlinear}.  For comparison, we use the performance profile proposed by Dolan and Mor{\'e} \cite{dolan2002benchmarking}. We run numerical experiments using Matlab 2023b on a system having Apple M2 chip with $8$ CPU cores and 8 GB of RAM.  

The parameters used for the three algorithms are given in Table \ref{tab2}. The performance profile is studied using the cone $K := \mathbb R^{m}_{+}$ for all $17$ problems.  

\begin{table}[!h]
\label{tab2}
\centering 
\resizebox{0.75\textwidth}{!}{\begin{minipage}{\textwidth}
\caption{Parameters used in the experiments.}
\begin{tabular}{c c c c c c c c c c c c c c c c}
\hline
$\rho $ & $\sigma$ &  $\beta_\text{SD}$ & $\nu$& $\Omega_0$ &  $\Omega_\text{max}$ & $\epsilon$ & $\eta_1$ & $\eta_2$ & $\gamma_1$ & $\gamma_2$    \\ 
\hline 
$0.0001$ & $0.1$ & $0.0001$ &$0.5$&$1$ & $20$ & $10^{-3}$  & $0.001$ & $0.75$ & $0.4$ & $0.9$  \\
\hline
\end{tabular} 
\end{minipage}}
\end{table}

First, we consider one problem (Example \ref{test_b_inst_53}) and show the performance of the three algorithms for three different ordering cones $K_1 := \mathbb{R}^2_+$, $K_{2}:=\{(y_{1}, y_{2})\in \mathbb R^{2}: 5y_{2}\ge 7 y_{1}, y_{2} \le 7 y_{1} \}$ and $ K_{3}:=\{(y_{1}, y_{2}) \in \mathbb R^{2}: y_{1} \ge 3y_{2}, 3y_{1}\le y_{2}\}$.

\begin{example}\label{test_b_inst_53} 
We consider a modified Test Instance 5.3 in \cite{steepmethset}. For $i \in [100]$, the function $f^{i}: \mathbb R^{2} \to \mathbb R^{2}$ is defined by 
\begin{align*}
f^{i}(x_1, x_2) : = 
\begin{pmatrix}
e^{\frac{x_1}{2}}\cos x_{2} + x_{1} \cos x_{2} \sin \tfrac{\pi(i-1)}{50} - x_{2} \sin x_{2} \cos^3 \tfrac{\pi(i-1)}{50} \\
e^{\frac{x_{2}}{20}} \sin x_{1} + x_{1} \sin x_{2} \sin^{3} \tfrac{\pi(i-1)}{100} + x_{2} \cos x_{2} \cos \tfrac{\pi(i-1)}{100})
\end{pmatrix}
\end{align*}
within the region $\mathcal S : = [-20, 20] \times [-20,20]$. We ran three algorithms for $100$ random initial points from $\mathcal S$. In Table \ref{tab61}, we show the performance of each algorithm using different ordering cones. From the table, we observe that no single algorithm performs the best in all aspects of the number of generated convergent sequences, average step length across iterations, number of iterations, and time taken before a sequence is converged.

In Figure \ref{Figure_Iter_Points_Image_Space_Example_53}, we display the sequence of iterates and the associated iterative points in the image space, respectively, generated by TRM for the initial point $x_{0}=(9,8)$.

\begin{table}[!h]
\label{tab61}
\centering
 \resizebox{\textwidth}{!}{\begin{minipage}{\textwidth}
 \caption{Performance of TRM, SD and CGM on Example \ref{test_b_inst_53} with different ordering cones.}
\begin{tabular}{c |c| c c c c c}
\hline
Cone & Method & No. of & 
No. of iterations              
    &  CPU time (in sec) & Average  \\
& & convergence     & (Min, Max, Mean, Median)     &  (Min, Max, Mean, Median) & Step length \\ 
\hline
 \multirow{3}{*}{\STAB{\rotatebox[origin=c]{0}{$K_{1}$}}} &$\text{TRM}$  & $48$ & $(1,50,6.95,3)$ & $(9.37, 748.18,73.17, 33.83)$  & $0.54$\\ 
&$\text{SD}$ &  $45$  &  $(1,63, 13.45,7)$  & $(0.768, 1109.56, 44.66,  3.00)$ & $0.339$
   \\ 
&$\text{CGM}$   &  $44$   &  $(1,54,10.65,8)$  & $( 0.733,415.24, 45.21, 1.51)$ & $7896.36$\\ 
\hline 
\multirow{3}{*}{\STAB{\rotatebox[origin=c]{0}{$K_{2}$}}}&
$\text{TRM}$   & $50$  & $(1, 87, 10.94,3)$ & $(8.42,
            1225.94,117.47, 36.12)$ & $0.685$\\
&$\text{SD}$  & $93$  & $(0,7,  0.65,0)$  & $(8.92, 5836.2,    321.36, 41.17)$ & $0.001$ \\
&$\text{CGM}$   &  $97$   & $(0,1,0.05,0)$   & $(3.64,20660.34,559.74,21.43)$ & $0.333$  \\
\hline 
\multirow{3}{*}{\STAB{\rotatebox[origin=c]{0}{$K_{3}$}}}&
$\text{TRM}$  &  $46$   &   $(1,69,9.65,4)$ & $(7.19,961.28, 90.98, 26.66)$ & $0.680$ \\
&$\text{SD}$  & $100$  & $(  0,    62,0.65,0)$   & $(7.02, 1158.22,        51.38, 40.39)$ & $0.01$
   \\
  &$\text{CGM}$ & $99$ & $(0,0,0,0 )$  & $(17.37, 9916.05,   175.12, 74.45)$
& $0$ \\
\hline
\end{tabular}
\end{minipage}}
\end{table}

\vspace{-0.5cm}

\begin{figure}[H]
\centering
\mbox{
\subfigure[For $K_{1}$]{\includegraphics[scale=0.24]{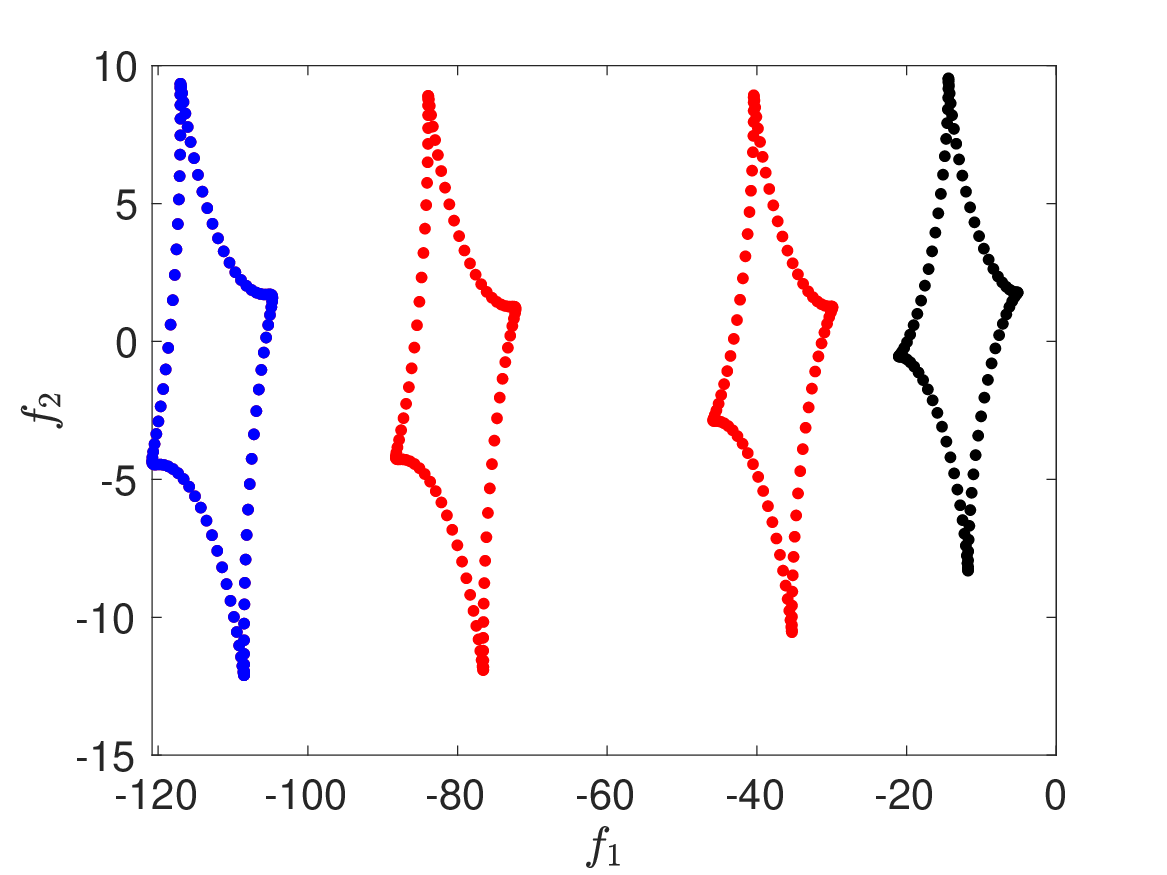}\label{Iteratve_points_image_space_53sec_cone_K1}}\quad
\subfigure[For $K_{2}$]{\includegraphics[scale=0.24]{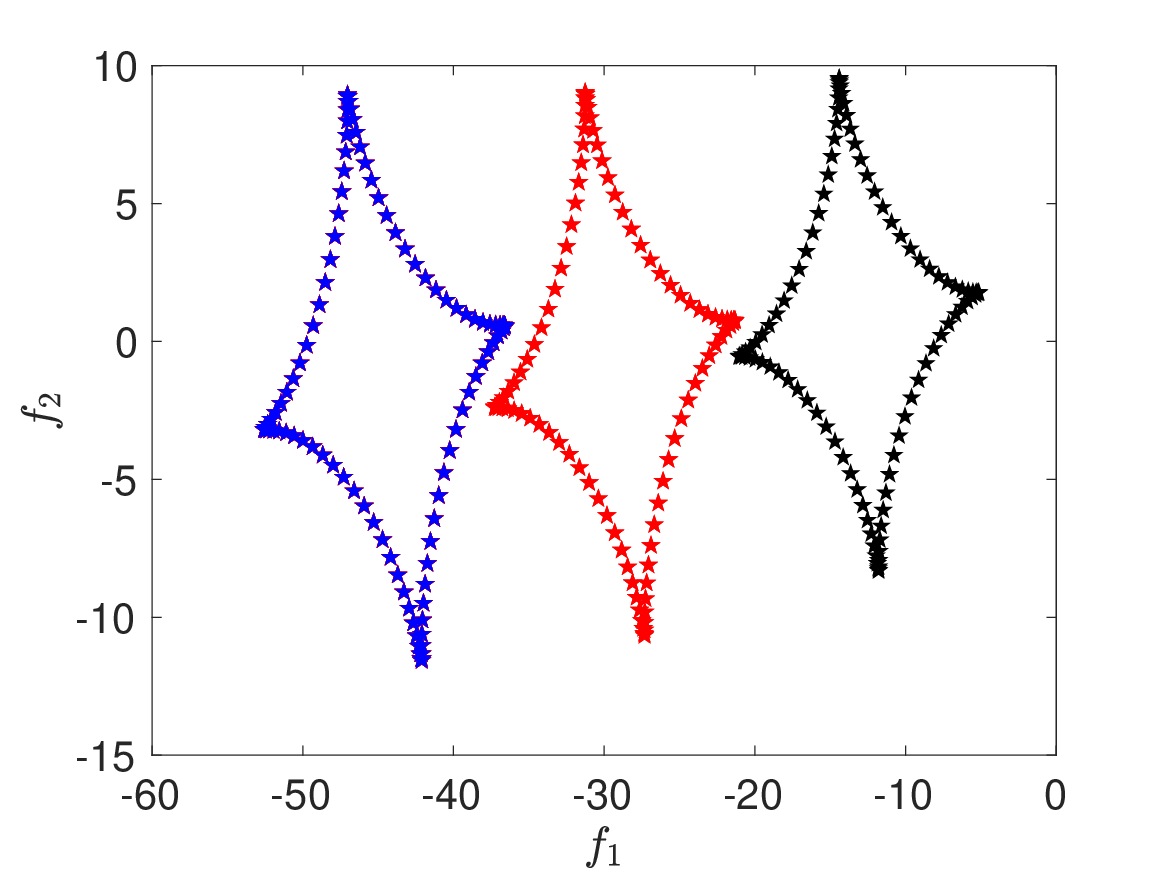}
\label{Iterative_points_image_space_53_second789_cone_K2}
\label{scnwlp_43stet}
}\quad
\subfigure[For $K_{3}$]{\includegraphics[scale=0.24]
{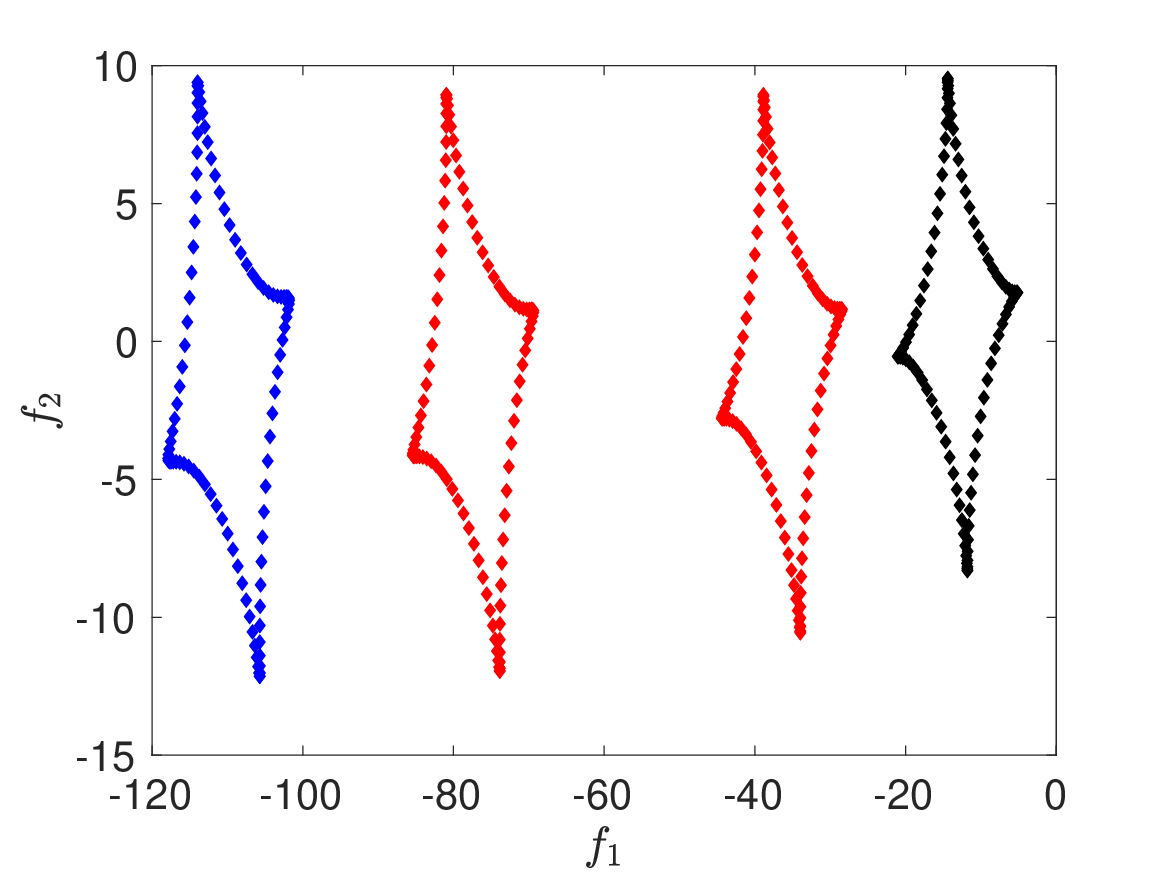}
\label{Iterative_points_image_space_53_second789_cone_K3}}
}
\caption{ Iterative points in the image space generated by Algorithm \ref{avds1} (TRM) with initial point $x_{0}=(9,8)$ for Example  \ref{test_b_inst_53}}
\label{Figure_Iter_Points_Image_Space_Example_53}
\end{figure}

\end{example}

Next, we present a comprehensive comparison of TRM with SD and CGM using the Dolan and Mor{\'e} \cite{dolan2002benchmarking} performance profiles. 

\subsection{Performance profiles}
To generate performance profiles, we run the algorithms for the same set of $100$ initial points for each problem for a maximum allowed $100$ iterations. For any initial point, if the algorithm $s$ converges in less than $100$ iterations, it is called to generate a convergent sequence; otherwise, it is called a nonconvergent sequence. Based on this, we consider a scenario in our analysis, where we look at only those common initial points for which all three algorithms have converged. The problem-wise values of the various metrics obtained from the experiments for each algorithm are provided in Table \ref{uipuy_ytf}. 

In Table  \ref{uipuy_ytf}, we exhibit (a) number of nonconvergences, (b) iterations: average number of iterations taken by an algorithm (averaged over only convergent points), (c) CPU time: time taken (in seconds) by an algorithm (averaged over only convergent points), (d) step size: average step size taken by algorithms (averaged over only convergent points). 
For each metric, the best-performing algorithm's value is marked in bold. Each problem is identified with a name, dimension $n$ in argument space, and dimension $m$ in image space. $\uparrow$ means higher values are better, and $\downarrow$ means lower values are better. A dash $`-$' means that we did not get even a single initial point for which all three algorithms converged.

\begin{table}[!h]
\label{uipuy_ytf}
\caption{Performance of SD, CGM  and TRM methods.}
\centering
 \resizebox{\textwidth}{!}{\begin{minipage}{\textwidth}
  \begin{tabular}{c c|c|c c c|c c c|c c c|c c c}
    \hline
   \multicolumn{3}{c|}{\textbf{Type of SOP}}   &
      \multicolumn{3}{c|}{\textbf{No. of nonconvergences $\downarrow$}} &
      \multicolumn{3}{c|}{\textbf{CPU Time} (Convergent) $\downarrow$} &
      \multicolumn{3}{c|}{\textbf{Iterations} (Convergent) $\downarrow$} & \multicolumn{3}{c}{\textbf{Step Size} (Convergent) $\uparrow$}\\
      \hline
 Name& $n,m$ & Domain ($S$) & \textbf{SD} & \textbf{CGM} & \textbf{TRM} & \textbf{SD} & \textbf{CGM} & \textbf{TRM} & \textbf{SD} & \textbf{CGM} & \textbf{TRM} &\textbf{SD} & \textbf{CGM} & \textbf{TRM}\\
  \hline 
  $\text{GGTZ1-ZDT1}$  &$2,2$& $ [0,1]^{n}$&$73$ & $100$ & $\bf{15}$ & $-$ & $-$ & $-$ & $-$ & $-$ & $-$ & $-$ & $-$ & $-$  \\
  
 & $5,2 $ &  &$98$ & $100$ & $\bf{2}$ & $-$& $-$ & $-$ & $-$ & $-$ & $-$ & $-$ & $-$ & $-$\\
 
 &$8,2$ & & $100$ &$100$ & $\bf{12}$ & $-$& $-$ & $-$ & $-$ & $-$ & $-$ & $-$ & $-$ & $-$ \\
 
 &$10,2$ &  &$100$ & $100$ & $\bf{17}$  & $-$ & $-$ & $-$& $-$ & $-$ & $-$ & $-$ & $-$ & $-$\\
 \hline
 
 $\text{GGTZ2-ZDT4}$  &$10,2$& $[0.01,1]\times$ &$100$ & $100$ & $\bf{0}$ & $-$ & $-$ & $-$ & $-$ & $-$ & $-$ & $-$ & $-$ & $-$ \\
  && $ [-5,5]^{n-1}$ & & &  &  &  &  &&  &  &&  & \\
 
 \hline
 
 $\text{GGTZ3-DTLZ1}$   & $6, 4$ &$[0,1]^{n}$& $94$ & $100$ & $\bf{67}$ & $-$& $-$& $-$& $-$ & $-$ & $-$ & $-$ & $-$ & $-$ \\
  \hline
  
 $\text{GGTZ4-DTLZ3}$  & $5,4$ & $[0,1]^{n}$ &$100$ & $100$ & $\bf{55}$ & $-$ & $-$ & $-$ & $-$ &$ -$ & $-$ & $-$ & $-$ & $-$ \\
 \hline 
 
 $\text{GGTZ6-DTLZ5}$  & $3,3$ & $[0,1]^{n}$&$27$ & $\bf{17}$ & $49$  & $\bf{4.41}$  & $7.37$ & $65.21$ & ${5.63}$ & $7.24$ & $\bf{5.26}$ &${0.095}$ &$ 0.108$ & $\bf{0.202}$ \\
 
 &$5, 3$ & & $56$ & $60$ & $\bf{30}$ & $\bf{6.35}$ & $21.39$  & $ 169.90$ & $\bf{6.3}$ & $16$ & $7.65$ & $0.159$ & ${0.064}$ &$\bf{0.279}$\\
 
 &$7, 5$ & &$97$ & $100 $ & $\bf{72}$ & $-$ & $-$ & $-$ & $-$ & $-$ & $-$ & $-$ & $-$ & $-$\\
 \hline
 
 
 $\text{GGTZ9-Hil}$  & $2,2$ & $[0,5]^{n}$ & $\bf{0}$ &$35$& $40$& $\bf{31.44}$ & $177.26$ & $214.51$ & $\bf{7.19}$ & $27.83$ & $20.58$ &$0.099$ & $\bf{0.680}$ & ${0.053}$\\
 \hline
 
 $\text{GGTZ7-DGO1}$  & $1,2$ & $[-10,13]$ &$\bf{20}$ & $\bf{20}$ & $93$ & $3.02$ & $\bf{2.93}$ & $33.76 $ & $33.76$ & $\bf{1}$ & $2.25$ & $0.575$ & $\bf{1}$ & ${0.323}$\\
 
 $\text{GGTZ8-DGO2}$  & $ 1,2$ & $[-9,9]$ &$\bf{1}$ &$\bf{1}$ &$2$ &$61.29$ & $\bf{6.13}$ & $ 44.9$  & $49.89$ & $\bf{1.59}$ & $ 6.38$&$0.742$ & $\bf{6.017}$ & $0.742$ \\
 \hline
 
  $\text{GGTZ10-JOS1a}$
   & $5, 2$ & $[-2,2]^{n}$ & $21$ & $\bf{7}$ & $42$ & $313.55$ & $1676.95$ & $\bf{132.16}$ & $14$ & $\bf{2.39}$ & $24.7$ & ${0.17}$ & $\bf{1.87}$ & $0.29$\\
 \hline
 
 $\text{GGTZ5-FDSa}$  & $2,3$ & $[-2,2]^{n}$ &$\bf{21}$ & $46$& $97$ & $10.85$ & $\bf{8.06} $ & $32.72$ & $4$& $3$ & $\bf{2.5}$ & $0.069$ & $\bf{0.103}$ & ${0.059}$ \\
 \hline
 
$\text{GGTZ11-Rosenbrock}$  & $4,3$& $[-2,2]^{n}$ & $100$ & $100$ & $\bf{79}$ & $-$ & $-$ & $-$ & $-$ & $-$ & $-$ & $-$ & $-$ & $-$\\
 \hline
 
 $\text{GGTZ12-Brown and Dennis}$ 
   & $4,5$ & $[-25,25]$ & $39$ & $16$ & $\bf{14}$ & $141.90$ & $\bf{130.31}$ & $267.93$ & $26.83$ & $\bf{9.17}$ & $10.67$ & $\bf{1.77}$ & ${1.55}$ & $1.64$ \\

  &  & $\times [-5,5]^{2}$ &  &  &  &  &  &  & &  &  &  &  &  \\

   &  & $\times [-1,1]$ &  &  &  &  & &  &  &  &  & &  &  \\
 \hline 
 
 $\text{GGTZ13-Trigonometric}$
 & $4, 4$  & $[-1,1]^{n}$ &$13$ & $14$ & $\bf{5}$ & $\bf{39.31}$ & $8249.51$ & $ 84.79$& $8.08$ & $4.85$ & $\bf{3.15}$ & $0.166$ & $\bf{0.514}$ & ${0.144}$\\
 \hline
 
 $\text{GGTZ14-Das and Dennis}$ & $5 ,2$ & $[-20,20]^{n}$&$\bf{0}$ & $\bf{0}$& $5$ & $-$ & $-$ & $-$ & $-$ & $-$ & $-$ & $-$ & $-$ & $-$ \\
 \hline
$\text{GGCZ15-BQT1} $
 & $1,2$ &$[2,10]^{n}$ &$74$ & $74$ & $\bf{73}$& $\bf{0.95}$ & $ 2.04$ & $ 1.30$ & $13.15$ & $\bf{1.54}$ & $2.80$& $0.11$ & $\bf{192608174.14}$ &$
         0.21$\\
\hline
$\text{GGCZ16-BQT2}$  & $2,2$ &$[-20,20]^{n}$ & $55$ & $56$ & $\bf{52}$ & $\bf{31.78}$ & $  47.65$ &  $36.58$ & $5.75$ & $6.25$ & $\bf{3}$ &  $0.03$ & $0.3$ & $\bf{0.55}$\\
\hline
$\text{GGTZ17-Sphere}$ & $3,3$ & $[0,1]^{3} $ &$23$ & $\bf{14}$ & $66$ & $98.97$ & $542.82$ & $\bf{87.16}$ & $\bf{5.08}$ & $9.78$ & $10.35$& $0.123$ & ${0.067}$ & $\bf{0.193}$\\
 \thickhline
 \end{tabular}
 \end{minipage}}
\end{table}

First, we look at the convergence property of each algorithm. For this, Figure \ref{per_prof_no_of_failure} shows the performance profile of the three algorithms in terms of the number of nonconvergence metrics. We observe that, overall, TRM has better convergence than SD and CGM for almost all values of $\tau$. 

Next, in Figure \ref{tyu_87}, we report the speed of convergence of the three algorithms with respect to four metrics: number of nonconvergences, number of iterations, CPU time, and average reciprocal step size. For this, we consider only the initial points for which all three algorithms converged. This enables us to do a fair comparison of the speed of convergence by removing the effect of any algorithm failing to converge. In doing so, we see that for $10$ out of the $22$ problems, CGM and SD did not converge for any initial point out of 100. Therefore, we consider only the remaining $12$ problems for this particular analysis.

From Figure \ref{Iterations_Common_conv}, we observe that CGM converges the fastest and performs better than TRM and SD. Between SD and TRM, TRM converges faster by taking fewer iterations.  

In terms of computation time, from Figure \ref{CPU_time_comm_conv_point}, we see that SD requires the least amount of time whereas TRM consumes the most. This is mainly because TRM involves Hessian computation. SD and CGM, on the other hand, do not contain any such calculation and hence are comparatively faster.

In Figure \ref{Step_sixe_avg}, we show the performance profile in terms of the average step size for which higher values mean better performance. According to Dolan and Mor{\'e} \cite{dolan2002benchmarking}, since the metric $t_{p,s}$ should have lower values for better performance, we use $\frac{1}{\text{step-size}}$ as the performance metric. The step size at $k$-th iteration is calculated by $\lVert x_{k+1}-x_{k} \rVert$. From Figure \ref{Step_sixe_avg}, we observe that CGM is better among the three algorithms. 

Based on these three metrics, we do not find any single algorithm that is the best across all the metrics. Rather, depending on the performance measure of interest, one algorithm can be preferred over the other.

Overall, we observe that TRM is advantageous in terms of a higher likelihood of finding a solution. However, there are problems such as GGTZ9-Hil, GGTZ7-DGO1, GGTZ5-FDSa, etc., where CGM and SD outperform TRM in every aspect. In fact, CGM and SD perform better for problems with fewer variables and function components, whereas TRM is more effective for problems with a higher number of variables and function components. Therefore, our proposed TRM algorithm is not a complete replacement for CGM and SD; instead, all algorithms are essential depending on the performance metric of interest. TRM should be considered as a very strong alternative for the large number of variables.

\begin{figure}[!h]
\centering
\mbox{
\subfigure[Nonconvergences]{\includegraphics[width=.24\textwidth]{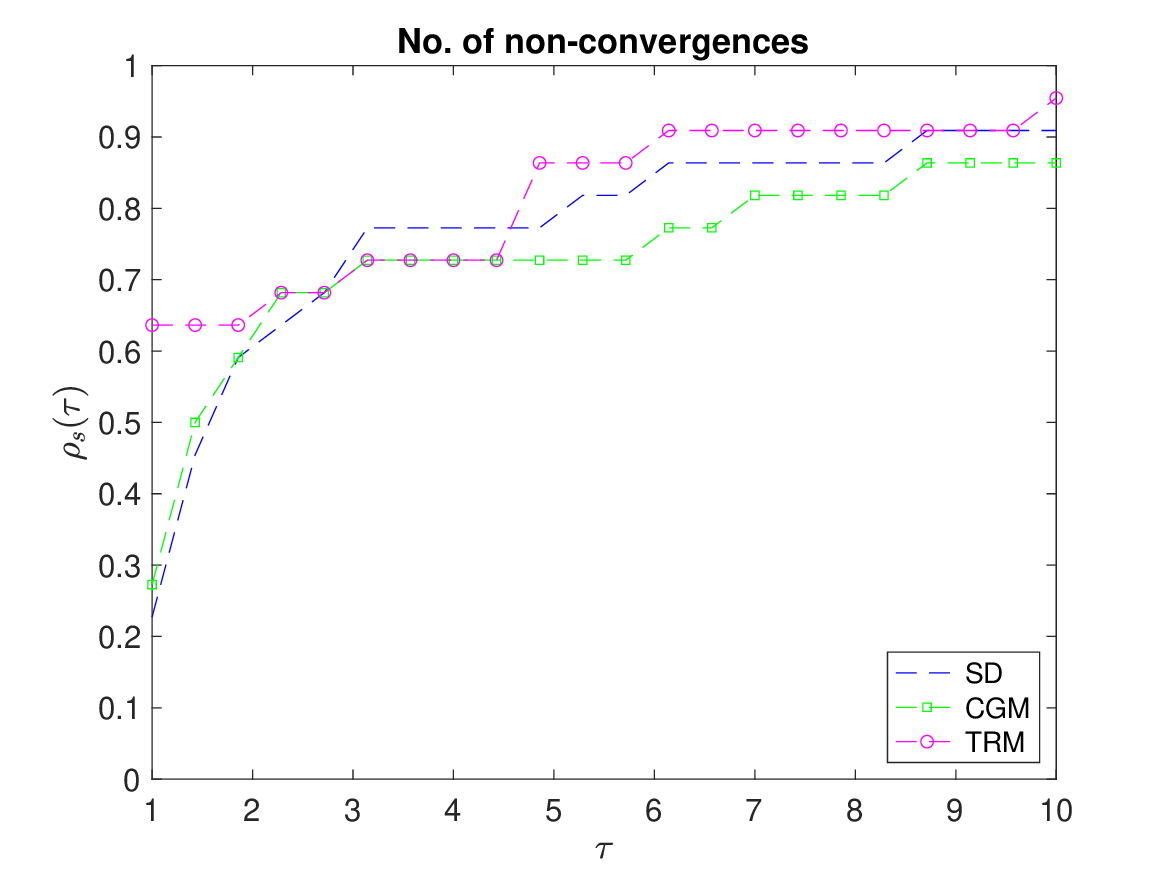}\label{per_prof_no_of_failure}}

\subfigure[Iterations]{\includegraphics[width=.24\textwidth]{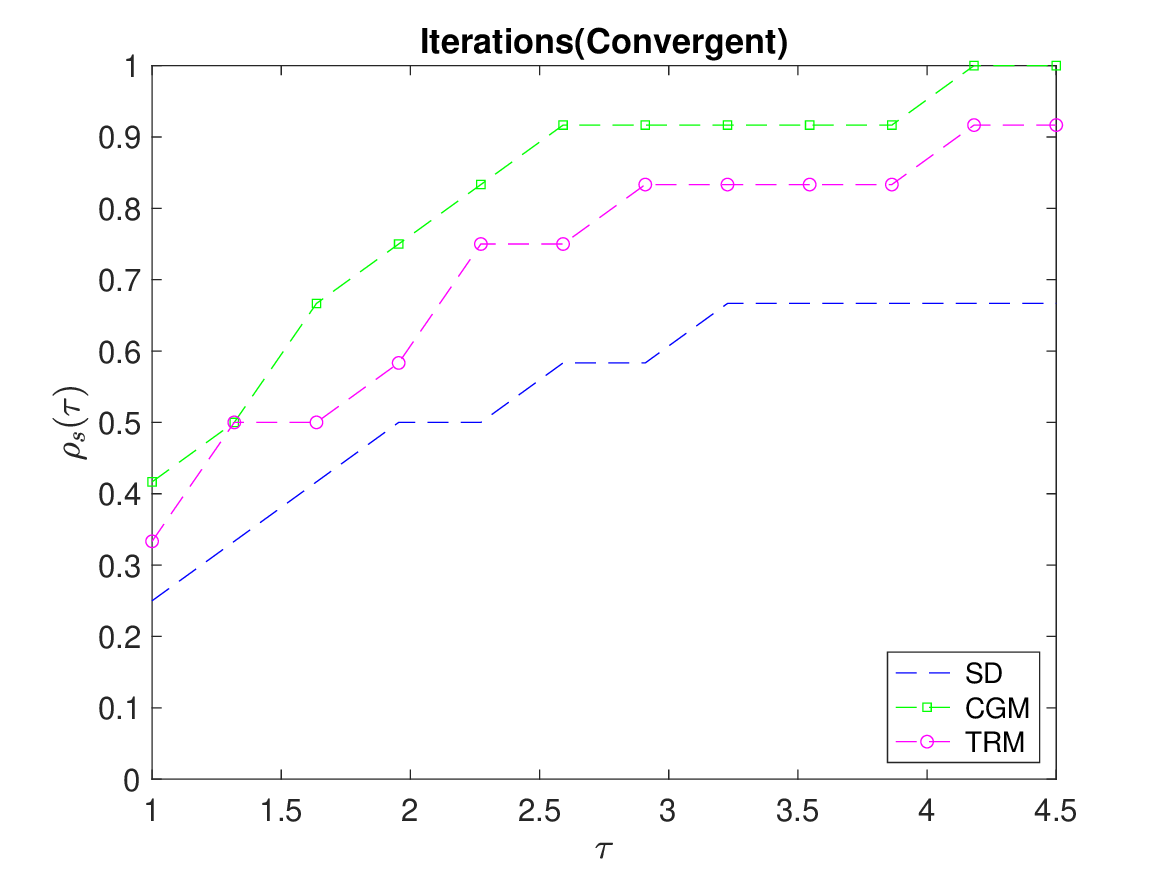}\label{Iterations_Common_conv}}

\subfigure[CPU time (sec)]{\includegraphics[width=.24\textwidth]{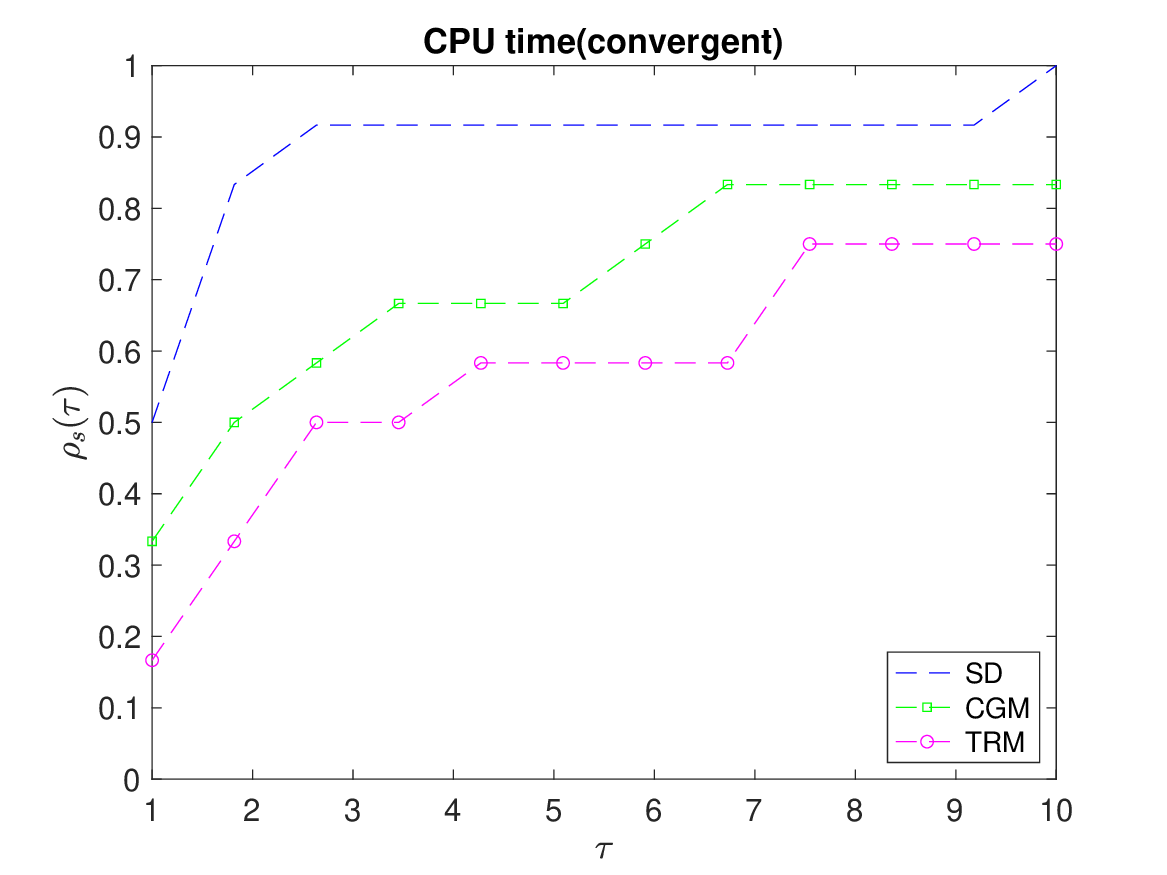}\label{CPU_time_comm_conv_point}}

\subfigure[Step size reciprocal ]{\includegraphics[width=.24\textwidth]{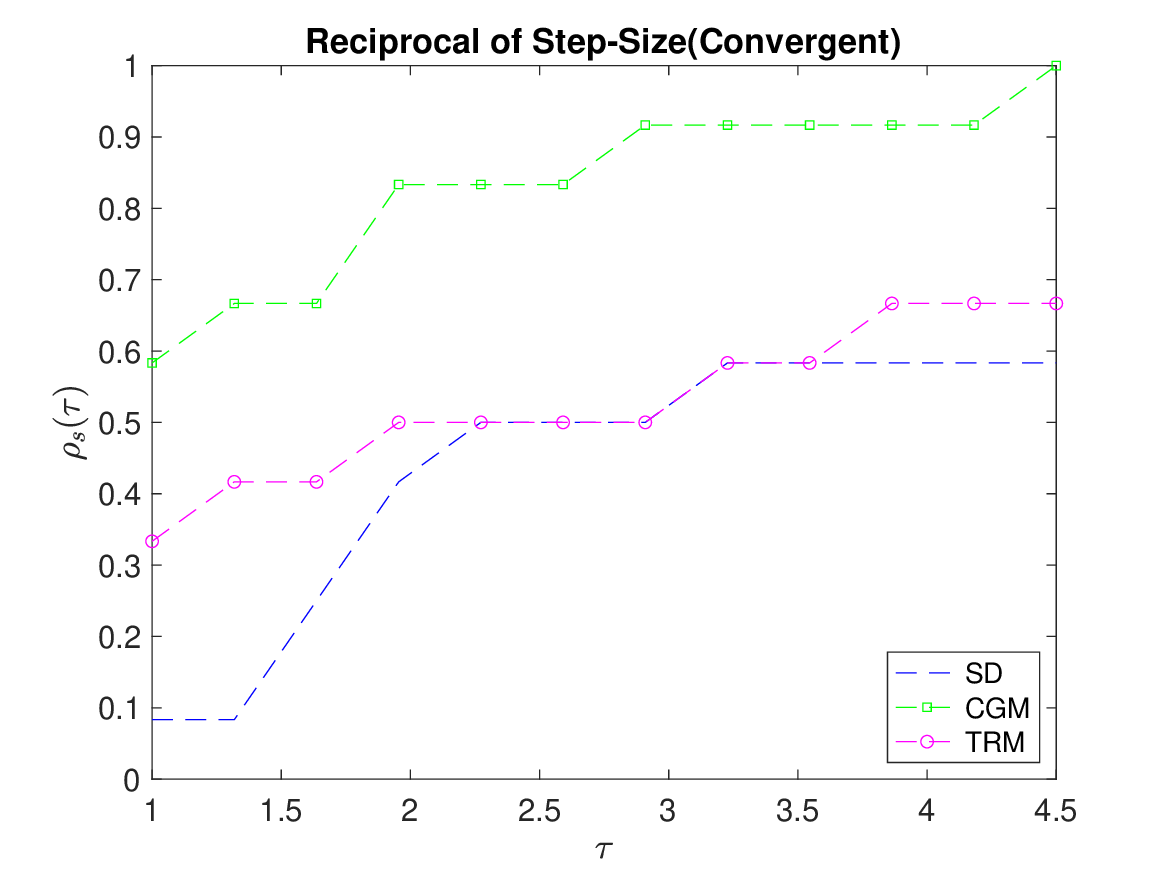}\label{Step_sixe_avg}}
}
\caption{Performance profile of TRM, SD and CGM}
\label{tyu_87}
\end{figure}

\section{Future directions} \label{sect6}
\noindent 
 In the future, we can endeavor to address the following:
\begin{enumerate}[(i)]
\item The proposed trust-region method exhibits a strict monotonic property that requires the objective function to decrease at each iteration. This may limit the speed of convergence. To improve this aspect, we may employ the nonmonotone trust-region method for (\ref{fgcx}), which commonly performs better in scalar and vector optimization. 
 
\item In our convergence analysis, we have used the regularity assumption of \cite{steepmethset}, which can be tried to relax in the future. 

\item In this work, we have considered the objective function of the problem (\ref{fgcx}) as a set-valued map with finite cardinality. In the future, we may explore the applicability of the proposed method for continuum cardinality.  
\end{enumerate}

\appendix 
\section{Test problems for set-valued  optimization}\label{appendix_ind}
Below, we provide a list of test problems for set-valued optimization. These problems are formulated from the test problems of multi-objective optimization problems \cite{huband2006review}. For each of the following test problems, the map $F:\mathbb R^{n} \rightrightarrows \mathbb R^{m}$ is given by $F(x): = \{f^{1}(x), f^{2}(x), \ldots, f^{100}(x)\}$. 
For the DTLZ-type problems, $x_{m}$ is the set of the last $x_{i}$ variables of the vector $x$ in $\mathbb R^{n}$, where $i$ varies from $1$ to $\ell=n-m+1$. The cardinality of the set  $x_{m}$, denoted as $|x_{m}|$, is $\ell$.

\begin{landscape}
\begin{table}
\caption{Test problems for set optimization problems with $p = 100$ and $i \in [p]$}\label{test_prob_table}
\centering 
\resizebox{1.4\textwidth}{!}{\begin{minipage}{\textwidth}
\begin{tabular}{lllll}
\toprule 
\begin{tabular}[c]{@{}l@{}}Problem \\ Name\end{tabular} & $f^i: \mathbb{R}^n \to \mathbb{R}^m$  & Auxiliary Functions                                                   & \begin{tabular}[c]{@{}l@{}}Domain for \\ Initial Points\end{tabular} & $\{(\phi_i, \psi_i): i \in [100]\}$ \\ 
\midrule
\begin{tabular}[c]{@{}l@{}} GGTZ1-ZDT1 \\ $m = 2$ \cite{huband2006review}\end{tabular} & \begin{tabular}[c]{@{}l@{}}$\begin{pmatrix}   f_{1}(x) + \left(0.02+ 0.02 \cos^{16}(\frac{4 \pi i}{100})\right) \cos(\frac{2 \pi i}{100}) \\ g(x) h(f_{1}(x),g(x)) +  0.15+ 0.15\cos^{16}\left(\frac{4 \pi i}{100} \right) \sin(\frac{2 \pi i}{100})  \\ \end{pmatrix}$\end{tabular} & $f_{1}(x) := x_{1}, g(x) := 1 + 9 \sum_{i=2}^{n} x_{i} \text{ and } h(f_{1}, g) := 1 - \sqrt{\frac{f_{1}}{g}}$ & $[0, 1]^n$ & None  \\ \midrule 
\begin{tabular}[c]{@{}l@{}} GGTZ2-ZDT4 \\ $m = 2$ \cite{huband2006review}\end{tabular} & \begin{tabular}[c]{@{}l@{}}$\begin{pmatrix} f_{1}(x)+(1 +  \cos^{16}(\frac{4\pi i}{100}) \cos(\frac{2\pi i}{100})) \\  g(x)h(f_{1}(x),g(x)) + (1 + \cos^{16}(\frac{4\pi i}{100})\sin(\frac{2\pi i}{100}))\\ \end{pmatrix}$\end{tabular} & $\begin{aligned} &f_{1}(x) := x_{1}, g(x) := 1+ 10(n-1)+ \sum_{i=2}^{n} (x^{2}_{i}-10\cos(4\pi x_{i})), \\ 
& h(f_{1},g) := 1 - \sqrt{ f_{1} / g} \end{aligned}$ & $[0, 1]\times[-5,5]^{n - 1}$ & None \\  \midrule 
\begin{tabular}[c]{@{}l@{}} 
GGTZ3-DTLZ1 \\ \cite{huband2006review} \end{tabular} & \begin{tabular}[c]{@{}l@{}}$\begin{pmatrix} (1+g(x))x_{1} x_{2}\cdots x_{m-1} \\ (1+g(x)) x_{1} x_{2}\cdots (1-x_{m-1}) \\ \vdots \\    \frac{1}{2} (1+g(x))\frac{1}{2} x_{1} (1-x_{2})\\ \frac{1}{2}(1-x_{1}) (1+g(x)).\\ \end{pmatrix} + \begin{pmatrix}  \cos(\phi_{i})\sin(\psi_{i})\\ \sin(\phi_{i})\sin(\psi_{i})\\ \left(\cos(\psi_{i})+\ln \tan\frac{\psi_{i}}{2}\right)+0.2 \phi_{i} \\ 0 \\  \vphantom{\int^0}\smash[t]{\vdots} \\ 0 \\ \end{pmatrix}$\end{tabular} & $g(x) := 100 \left(|x_{m}| + \sum_{x_{i}\in x_{m}} (x_{i}-0.5)^{2}-\cos(20 \pi (x_{i}-0.5))\right)$  & $[0, 1]^n$ & 
$\begin{aligned} 
\{\tfrac{\pi}{5}(j - 1): j \in [10]\} \\ 
\times \{\tfrac{\pi}{5}(l - 1): l \in [10]\}
\end{aligned}$ \\    \midrule 
\begin{tabular}[c]{@{}l@{}} 
GGTZ4-DTLZ3 \\\cite{huband2006review}          \end{tabular}                                  & \begin{tabular}[c]{@{}l@{}}$\begin{pmatrix} (1+g(x))\cos(\frac{x_{1}\pi}{2})\cdots \cos(\frac{x_{m-2}\pi}{2}) \cos(\frac{x_{m-1}\pi}{2})\\ (1+g(x))\cos(\frac{x_{1}\pi}{2})\cdots \cos(\frac{x_{m-2}\pi}{2}) \sin(\frac{x_{m-1}\pi}{2})\\ (1+g(x)) \cos(\frac{x_{1}\pi}{2})\cdots \sin(\frac{x_{m-2}\pi}{2}) \\ \vdots \\ (1+ g(x)) \sin(\frac{x_{1}\pi}{2}).\\ \end{pmatrix} + \begin{pmatrix} \sech(\phi_{i}) \cos(\phi_{2})\\ \sech(\phi_{i}) \sin(\psi_{i})\\ \phi_{i}-\tanh(\phi_{i})\\ 0 \\ \vphantom{\int^0}\smash[t]{\vdots} \\ 0 \\ \end{pmatrix}$\end{tabular} & $g(x) := 100 \left(\lvert x_{m} \rvert + \sum_{x_{i} \in x_{m}} (x_{i}-0.5)^{2}-\cos(20\pi(x_{i}-0.5))\right)$ & $[0,1]^n$ & $\begin{aligned} 
\{\tfrac{\pi}{5}(j - 1): j \in [10]\} \\ 
\times \{\tfrac{\pi}{5}(l - 1): l \in [10]\}
\end{aligned}$ \\ \midrule 
\begin{tabular}[c]{@{}l@{}}
GGTZ5-FDSa \\ \cite{NewtonMethod} \end{tabular} & \begin{tabular}[c]{@{}l@{}}$\begin{pmatrix} \frac{1}{n^2} \sum_{i = 1}^{n} i(x_{i}-i)^4 + (1+ \cos(\phi_{i})\cos(\psi_{i})) \\ \exp\left(\sum_{i=1}^{n}\frac{x_{i}}{n}\right) + {\lVert x \rVert}^{2}_{2} + (1 + \cos(\phi_{i})\sin(\psi_{i})) \\ \frac{1}{n(n+1)}  \sum_{i=1}^{n} i(n-i+1) \exp(-x_{i})+ \sin(\phi_{i}) \end{pmatrix}$\end{tabular}  & None   & $[-2,2]^n$ & None   \\ 
\midrule 
\begin{tabular}[c]{@{}l@{}}
GGTZ6-DTLZ5 \\\cite{huband2006review}         \end{tabular}                                    &\begin{tabular}[c]{@{}l@{}}$\begin{pmatrix}  (1+g(x))\cos(\tfrac{\theta_{1}\pi}{2})\cdots\cos(\tfrac{\theta_{m-2}\pi}{2})\cos(\tfrac{\theta_{m-1}\pi}{2})\\  (1+g(x))\cos(\tfrac{\theta_{1}\pi}{2})\cdots\cos(\tfrac{\theta_{m-2}\pi}{2})\sin(\tfrac{\theta_{m-1}\pi}{2})\\  (1+g(x)) \cos(\tfrac{\theta_{1}\pi}{2})\cdots \sin(\tfrac{\theta_{m-2}\pi}{2})\\  \vdots \\ (1+ g(x))\sin(\tfrac{\theta_{1}\pi}{2})\\ \end{pmatrix}+ \begin{pmatrix} 5\tfrac{\psi_{i}}{2\pi}\\  \lambda_{i} \tfrac{\cos(\phi_{i})}{10} \\  \lambda_{i} \tfrac{\sin(\phi_{i})}{10} \\  0 \\  \vphantom{\int^0}\smash[t]{\vdots}\\  0\\ \end{pmatrix}$\end{tabular} & \begin{tabular}[c]{@{}l@{}}$\begin{aligned} & \theta_{i}(x):= \frac{1}{2(1+g(x))} (1+g(x)x_{i}) , i = 2, 3,\ldots,(m-1), \\ &\lambda_{i}=\pi+(\psi_{i}-\pi)^2\text{and}~ g(x) := \sum_{x_{i} \in x_{M}} (x_{i}-0.5)^2. \\ \end{aligned}$\end{tabular} & $[0,1]^n$  & $\begin{aligned} 
\{\tfrac{\pi}{5}(j - 1): j \in [10]\} \\ 
\times \{\tfrac{\pi}{5}(l - 1): l \in [10]\} \end{aligned}$  \\ \midrule 
\begin{tabular}[c]{@{}l@{}}GGTZ7-DGO1\\ $n = 1$~\cite{huband2006review}\end{tabular}   & \begin{tabular}[c]{@{}l@{}}$ \begin{pmatrix} 
\sin x + \sin(\tfrac{\pi i}{50} + \cos(\tfrac{\pi i}{50}))\\    \sin(x + 0.7) + \cos(\tfrac{\pi i}{50} + \sin(\tfrac{\pi i}{50}))\\    \end{pmatrix}$\end{tabular} & None & $[-4\pi, 4\pi]$   & None  \\ \midrule 
\begin{tabular}[c]{@{}l@{}}GGTZ8-DGO2\\ $n = 1$~\cite{huband2006review}\end{tabular}   & \begin{tabular}[c]{@{}l@{}}$\begin{pmatrix} x^{2} + \sin(\tfrac{\pi i}{50} + \cos(\tfrac{\pi i}{50}))\\ 9 -\sqrt{81-x^2} + \cos(\tfrac{\pi i}{50} + \sin(\tfrac{2\pi i}{50}))\\ \end{pmatrix}$\end{tabular}   & None   & $[-3\pi, 3\pi]$   & None  \\ \midrule 
\begin{tabular}[c]{@{}l@{}}GGTZ9-Hil\\ $m = 2$~\cite{NewtonMethod}\end{tabular}   & \begin{tabular}[c]{@{}l@{}}$g(x) + \begin{pmatrix} 10 ((9+ \exp(\sin(\tfrac{\pi i}{25}))- \sin(\tfrac{\pi i}{25})+ 2(\cos(\tfrac{2\pi i}{25}))^2)/128) \cos( \tfrac{\pi i}{50})\\ 10 ((9+ \exp(\sin(\tfrac{ \pi i}{25}))-\sin(\tfrac{\pi i}{25})+ 2(\cos(\tfrac{2\pi i}{25}))^2)/128) \sin(\tfrac{\pi i}{50})\\     \end{pmatrix}$\end{tabular}         & \begin{tabular}[c]{@{}l@{}}$g(x) := \begin{pmatrix} \cos((\tfrac{\pi}{180})(45+40\sin(2\pi x_{1})+ 25 \sin(2\pi x_{2}))(1+0.5 \cos(2\pi x_{1}) ) \\ \sin((\tfrac{\pi}{180})(45+40\sin(2\pi x_{1})+ 25 \sin(2\pi x_{2}))(1+0.5 \cos(2\pi x_{1}))\\     \end{pmatrix} $\end{tabular} & $[0,1]^n$   & None   \\ \midrule 
\begin{tabular}[c]{@{}l@{}}GGTZ10-JOS1a\\ $m = 2$~\cite{huband2006review}\end{tabular} & \begin{tabular}[c]{@{}l@{}}$ \begin{pmatrix} \frac{1}{n} \sum_{i=1}^{n} x^{2}_{i}+ 0.1 \cos(\tfrac{\pi i}{50})\\  \frac{1}{n} \sum_{i=1}^{n} (x_{i}-2)^250 \sin(\tfrac{\pi i}{50})\\ \end{pmatrix}$\end{tabular} & None & $[-100, 100]^n$ & None  \\ \midrule 
\begin{tabular}[c]{@{}l@{}}GGTZ11-\\ Rosenbrock\\ $m = 3$ \cite{mita2019nonmonotone}\end{tabular}             & \begin{tabular}[c]{@{}l@{}}$\begin{pmatrix} 100 (x_{2}-x^{2}_{1})^{2}+ (x_{2}-1)^{2}+ (r^{2}(\cos({\phi_{i}}) \cos(\psi_{i}) \sin(\psi_{i}))) \\           100 (x_{3}-x^{2}_{2})^{2} + (x_{3}-1)^{2} + (r^{2}(\cos({\phi_{i}}) \sin(\psi_{i}) \sin(\psi_{i})))\\           100 (x_{4}-x^{2}_{3})^{2} + (x_{4}-1)^{2}+ (r^{2}(\cos({\phi_{i}}) \sin(\psi_{i}) \cos^{2}(\psi_{i})))\\     \end{pmatrix}$\end{tabular}   & $r = 16$       & $[-2,2]^n$ & $\begin{aligned} 
\{\tfrac{\pi}{5}(j - 1): j \in [10]\} \\ 
\times \{\tfrac{\pi}{5}(k - 1): k \in [10]\}
\end{aligned}$  \\  \midrule 
\begin{tabular}[c]{@{}l@{}}GGTZ12-\\ Brown and Dennis\\ $n =3, m = 4$ \cite{mita2019nonmonotone}\end{tabular} & \begin{tabular}[c]{@{}l@{}}$\begin{pmatrix}    (x_{1}+ \frac{1}{5}x_{2}-\exp({\frac{1}{5}}))^{2}+ (x_{3}+x_{4}\sin(\frac{1}{5})\hspace{-0.1cm}-\cos(\frac{1}{5}))^{2}\hspace{-0.1cm}+\hspace{-0.1cm} \cos(\phi_{i}) \sin(\psi_{i})\\ (x_{1} \hspace{-0.1cm}+ \frac{2}{5}x_{2}-\exp({\frac{2}{5}}))^{2}\hspace{-0.1cm}+\hspace{-0.1cm} (x_{3}+ x_{4}\sin(\frac{2}{5})\hspace{-0.1cm}-\hspace{-0.1cm}\cos(\frac{2}{5}))^{2}\hspace{-0.1cm}+ \hspace{-0.1cm}\sin(\phi_{i}) \sin(\psi_{i})\\     (x_{1}\hspace{-0.1cm}+\hspace{-0.1cm} \frac{3}{5}x_{3}\hspace{-0.1cm}-\hspace{-0.1cm}\exp(\frac{3}{5}))^{2}\hspace{-0.1cm}+\hspace{-0.1cm} (x_{3}\hspace{-0.1cm}+ \hspace{-0.1cm}x_{4}\sin(\frac{3}{5})\hspace{-0.1cm}-\hspace{-0.1cm}\cos(\frac{3}{5}))^{2}\hspace{-0.1cm}+\hspace{-0.1cm} (\cos(\psi_{i})\hspace{-0.1cm}+ \hspace{-0.1cm}\log(\tan(\frac{\psi_{i}}{2})))\hspace{-0.1cm}+\hspace{-0.1cm}0.5\phi_{i} \hspace{-20cm}\\     \end{pmatrix}$\end{tabular} & None    &  $\begin{aligned} 
& [25, 25] \times [-5, 5]^2 \\ 
& [-1, 1]
\end{aligned}$   & 
$\begin{aligned} 
&\{ \tfrac{2\pi}{5}(j-1):j\in [10]\} \\ 
&\times \{ 0.01+ 0.098(k-1):k\in [10]\}
\end{aligned}$ \\  \midrule 
\begin{tabular}[c]{@{}l@{}}GGTZ13-\\ Trigonometric\\ $m = 4$ \cite{mita2019nonmonotone}\end{tabular}          & \begin{tabular}[c]{@{}l@{}}$\begin{pmatrix}  (1 - \cos x_{1} + (1-\cos x_{1}) -\sin x_{1})^{2} + \cos(\phi_{i}) \sin(\psi_{i})  \\   (2 - \cos (x_{1}+x_{2}) + 2 (1- \cos x_{2})-\sin x_{2})^{2} + \sin(\phi_{i})\sin(\psi_{i}) \\   (3\hspace{-0.1cm}-\hspace{-0.1cm}\cos(x_{1}\hspace{-0.1cm}+\hspace{-0.1cm}x_{2}\hspace{-0.1cm}+\hspace{-0.1cm}x_{3})\hspace{-0.1cm}+3(1\hspace{-0.1cm}-\hspace{-0.1cm}\cos x_{3})\hspace{-0.1cm}-\hspace{-0.1cm}\sin x_{3})^2\hspace{-0.1cm}+\hspace{-0.1cm}(\cos(\psi)\hspace{-0.1cm}+\hspace{-0.1cm}\log(\tan(\frac{\psi_{i}}{2})))\hspace{-0.1cm}+\hspace{-0.1cm}0.2 \phi_{i} \\ (4 -\cos(x_{1}+x_{2}+x_{3}+x_{4})+4(1-\cos x_{4})-\sin x_{4})\hspace{-20cm}\\  \end{pmatrix}$\end{tabular}  & None  & $[-1, 1]^n$
& 
$\begin{aligned} 
&\left\{ \tfrac{2\pi}{5}(j-1):j\in [10]\right\} \\ 
&\times \{ 0.01+ 0.098(k-1):k\in [10]\}
\end{aligned}$ \\  \midrule 
\begin{tabular}[c]{@{}l@{}}GGTZ14-\\ Das and Dennis \\ $m = 2$~\cite{mita2019nonmonotone}\end{tabular} & \begin{tabular}[c]{@{}l@{}}$\begin{pmatrix}  (x^{2}_{1}+x^{2}_{2}+x^{2}_{3}+x^{2}_{4} + x^{2}_{5} + (\sin(\frac{i\pi}{50})+ \cos(\frac{i\pi}{50})) \\ (3 x_{1} + 2 x_{2} - \frac{x_{3}}{3} + 0.01(x_{4} - x_{5})^{3} + (\sin(\frac{i\pi}{50})+\cos(\frac{i\pi}{50}))\\ \end{pmatrix}$\end{tabular} & None & $[-20, 20]^n $ & None  \\ \midrule 
\begin{tabular}[c]{@{}l@{}}GGCZ15-BQT1\\ $m = 2~\cite{steepmethset}$\end{tabular}              & \begin{tabular}[c]{@{}l@{}}
$\begin{pmatrix}  x \\ \frac{x}{2} \sin(x)\\ \end{pmatrix} $  
$+ {\cos}^{2}(x) \left[\frac{(i-1)}{4} \begin{pmatrix} 1\\ -1\\ \end{pmatrix} + \bigg(1-\frac{i-1}{4} \bigg)\right]$ \end{tabular}     & None  & None & None                  \\ \midrule 
\begin{tabular}[c]{@{}l@{}}GGCZ16-BQT2\\ $m = 2$~\cite{steepmethset}\end{tabular}                                                               & \begin{tabular}[c]{@{}l@{}}$\begin{pmatrix}  e^{\frac{x_1}{2}}\cos x_{2} + x_{1} \cos x_{2} \sin \tfrac{\pi(i-1)}{50} - x_{2} \sin x_{2} \cos^3 \tfrac{\pi(i-1)}{50} \\  e^{\frac{x_{2}}{20}} \sin x_{1} + x_{1} \sin x_{2} \sin^{3} \tfrac{\pi(i-1)}{50} + x_{2} \cos x_{2} \cos \tfrac{\pi(i-1)}{50}\\ \end{pmatrix}$\end{tabular}                                      & None   & None  & None    \\ \midrule 
\begin{tabular}[c]{@{}l@{}}GGTZ17-\\ Sphere\\ $n = 3, m = 3$\end{tabular} & \begin{tabular}[c]{@{}l@{}}$\begin{pmatrix}      (1 + g(x_3)) \cos u(x_1) \cos v(x_1, x_2, x_3) \\ (1 + g(x_3)) \cos u(x_1) \sin v(x_1, x_2, x_3) \\ (1 + g(x_3)) \sin u(x_1) \\     \end{pmatrix} + \tfrac{1}{16}     \begin{pmatrix} \cos \phi_i \\ \cos \psi_i \sin \phi_i \\     \sin \psi_i \sin \phi_i \\ \end{pmatrix}$\end{tabular} & \begin{tabular}[c]{@{}l@{}}$\begin{aligned} &g(x_3) := (x_3 - \tfrac{1}{2})^2,~ u(x_1) := \frac{\pi x_1}{2}, \\ &v(x_1, x_2, x_3) := \tfrac{\pi (1 + 2 g(x_3) x_2)}{4 \left(1 + g\left(\sqrt{x_1^2 + x_2^2 + x_3^2}\right)\right)}\end{aligned}$\end{tabular} & None  & 
$\begin{aligned} 
\{\tfrac{\pi}{10}(j - 1): j \in [10]\} \\ 
\times \{\tfrac{\pi}{5}(l - 1): l \in [10]\}
\end{aligned}$  \\ 
\bottomrule                        
\end{tabular}
\end{minipage}}
\end{table}
\end{landscape}

\subsubsection*{Funding}
Core Research Grant (CRG/2022/001347) from SERB, India

\subsubsection*{Data availability}
There is no data associated with this paper.

\subsection*{Declarations}    

\subsubsection*{Funding and/or Conflicts of interests/Competing interests} 
The authors do not have any conflicts of interest to declare. They also do not have any funding conflicts to declare.

\subsubsection*{Ethics approval} 
This article does not involve any human and/or animal studies.


\end{document}